\numberwithin{equation}{section}
\theoremstyle{definition}
\theoremstyle{plain}
\newtheorem{theorem}{Theorem}[section]
\newtheorem{corollary}[theorem]{Corollary}
\newtheorem{lemma}[theorem]{Lemma}
\newtheorem{conjecture}[theorem]{Conjecture}
\newtheorem{question}[theorem]{Question}
\newcommand{\CC}{\mathbf{C}}
\newcommand{\ZZ}{\mathbf{Z}}
\newcommand{\OO}{\mathcal{O}}
\newcommand{\hh}{\mathfrak{h}}
\newcommand{\triv}{\mathrm{triv}}
\newcommand{\la}{\langle}
\newcommand{\ra}{\rangle}
\newcommand{\ttt}{\mathfrak{t}}
\begin{document}

\title{Subspace arrangements and Cherednik algebras}

\author{Stephen Griffeth}

\address{Instituto de Matem\'atica y F\'isica \\
Universidad de Talca \\ sgriffeth@inst-mat.utalca.cl}

\begin{abstract}
The purpose of this article is to study the relationship between numerical invariants of certain subspace arrangements coming from reflection groups and numerical invariants arising in the representation theory of Cherednik algebras. For instance, we observe that knowledge of the equivariant graded Betti numbers (in the sense of commutative algebra) of any irreducible representation in category $\OO$ is equivalent to knowledge of the Kazhdan-Lusztig character of the irreducible object (we use this observation in joint work with Fishel-Manosalva). We then explore the extent to which Cherednik algebra techniques may be applied to ideals of linear subspace arrangements: we determine when the radical of the polynomial representation of the Cherednik algebra is a radical ideal, and, for the cyclotomic rational Cherednik algebra, determine the socle of the polynomial representation and characterize when it is a radical ideal. The subspace arrangements that arise include various generalizations of the $k$-equals arrangment.  In the case of the radical, we apply our results with Juteau together with an idea of Etingof-Gorsky-Losev to observe that the quotient is Cohen-Macaulay for positive choices of parameters. In the case of the socle (in cyclotomic type), we give an explicit vector space basis in terms of certain specializations of non-symmetric Jack polynomials, which in particular determines its minimal generators and Hilbert series and answers a question posed by Feigin and Shramov.
\end{abstract}

\thanks{I thank Misha Feigin, Steven Sam and Jos\'e Simental for very helpful comments on a preliminary version of this paper and Carlos Ajila for pointing out a number of corrections to the first arxiv version. I acknowledge the financial support of Fondecyt Proyecto Regular 1190597.}

\maketitle

\section{Introduction}

\subsection{Overview} Given a graded module over a polynomial ring, there are a number of invariants encoding its structure and complexity. In roughly increasing order of subtlety and computational inaccessibility, some of these are the degrees of its minimal generators, its Hilbert polynomial, its Hilbert series, and its Betti table. In general, these invariants are difficult to calculate explicitly, but for certain classes of modules with more structure one might hope for closed formulas or combinatorial algorithms. Together with Berkesch-Zamaere and Sam \cite{BGS}, we observed that one (type of) module for which more can be said is the ideal of the $k$-equals arrangement in $n$-space. The key point is that this ideal is a unitary irreducible object of category $\OO_c$ for the rational Cherednik algebra of the symmetric group, so that technology developed for representation theory may be applied to study it. In this paper we develop a general framework for handling this interaction between Cherednik algebras and commutative algebra, and attempt to answer the natural (if somewhat vague) question: which graded modules may be studied using Cherednik algebras? As it turns out, the class of ideals to which Cherednik algebra techniques may be fruitfully applied is much broader than we realized in \cite{BGS}, in two different senses: firstly, we can potentially apply representation theoretic tools to compute the graded equivariant Betti numbers of any irreducible object of category $\mathcal{O}_c$, unitary or not, and secondly, it is not necessary to actually construct a BGG-type resolution to do so. In the remainder of this introduction we will make these observations more precise, and state our main theorems. We remark that the two streams of ideas coming together here have been studied previously, in commutative algebra in \cite{Sid}, and from the Cherednik algebra point of view in \cite{FeSh}.

\subsection{Subspace arrangements and their ideals}  Here, to orient the reader unfamiliar with Cherednik algebras and their representations, we give a class of examples of the type of ideal to which our methods apply, and state three of our main theorems: we give a set of minimal generators and a combinatorial formula for the coefficients of the Hilbert series, and prove that a certain class of subspace arrangements is Cohen-Macaulay. Although our proof of the theorems we are about to state requires a good deal of the machinery developed to study the representation theory of Cherednik algebras, the theorems themselves may be stated without any of the representation theoretic language (though the statement of the second theorem requires certain orthogonal polynomials). Fix integers $k,\ell,m,$ and $n$ such that $\ell,n \geq 1$, $2 \leq k \leq n$ and $0 \leq m \leq k-1$. Define the sets
$$X_{k,\ell,n}=\{(x_1,\dots,x_n) \in \CC^n \ | \ \hbox{ $\exists S \subseteq \{1,2,\dots,n\}$ with $|S|=k$ and $x_i^\ell=x_j^\ell$  $\forall i,j \in S$} \}$$ and
$$Y_{m+1,n}=\{(x_1,\dots,x_n) \in \CC^n \ | \ \hbox{ $\exists S \subseteq \{1,2,\dots,n\}$ with $|S|=m+1$ and $x_i=0$  $\forall i \in S$} \}.$$ Let $I=I_{k,\ell,m,n} \subseteq \CC[x_1,\dots,x_n]$ be the ideal of polynomial functions vanishing on the union $X_{k,\ell,n} \cup Y_{m+1,n}$.  Notice that if $m=k-1$ then $Y_{m+1,n} \subseteq X_{k,\ell,n}$ and hence $I$ is simply the ideal of $X_{k,\ell,n}$. 

First we will give an explicit set of generators for $I$. To do so, we require certain partitions, which we will visualize as Young diagrams. Firstly, divide $n$ by $k-1$ to obtain a quotient $q$ and remainder $r$ with $0 \leq r < k-1$. Let $((k-1)^q,r)$ be the partition recording the result of this division. For example, if $n=8$ and $k=4$ then the Young diagram of this partition is
$$\yng(3,3,2).$$ Now we split this into two partitions $\lambda^0$ and $\lambda^1$ as follows: if $m=k-1$ take $\lambda^0$ to be the partition itself and $\lambda^1=\emptyset$. Otherwise take $\lambda^1$ to be the partition whose shape consists of all boxes (weakly) down and to the right of the $m+1$th box along the top row, and take $\lambda^0$ to be the remaining diagram. Thus for example, with $k=4$ and $n=8$ as above, if $m=1$ then the pair $(\lambda^0,\lambda^1)$ is
$$\yng(1,1,1) \qquad \yng(2,2,1).$$ We define a \emph{standard Young tableau} on the pair $(\lambda^0,\lambda^1)$ to be a bijection from their boxes to the set of integers $\{1,2,\dots,n\}$ that is increasing from top to bottom and left to right (within each $\lambda^i$). An example is
$$\young(1,4,5) \qquad \young(23,67,8).$$ Finally, given a standard Young tableau $T$, we define a polynomial $f_T$ to be the product
$$f_T=\prod_{b \in \lambda^1} x_{T(b)} \prod_{ \substack{b, b' \in \lambda^\cdot \\ b \ \text{above} \ b' \ \text{in the same column}}} (x_{T(b)}^\ell-x_{T(b')}^\ell).$$ So for our example $T$ above,
$$f_T=x_2x_3x_6x_7 x_8 (x_1^\ell-x_4^\ell)(x_1^\ell-x_5^\ell)(x_4^\ell-x_5^\ell)(x_2^\ell-x_6^\ell)(x_2^\ell-x_8^\ell)(x_6^\ell-x_8^\ell)(x_3^\ell-x_7^\ell).$$

We refer to the polynomial $f_T$ as the \emph{Garnir polynomial} associated to $T$. Their representation-theoretic significance is that, for $\ell>1$ or $m=k-1$, their span is the lowest occurrence of the $G(\ell,1,n)$-representation indexed by $(\lambda^0,\emptyset,\dots,\emptyset,\lambda^1)$. 

\begin{theorem} \label{generation thm}
The ideal $I$ is generated by the set of Garnir polynomials $f_T$ for $T$ ranging over all standard Young tableaux on $(\lambda^0,\lambda^1)$. Moreover, if $\ell >1$ or $m=k-1$ this is a minimal generating set. 
\end{theorem}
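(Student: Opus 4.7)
The plan is to realize $I$ as the defining ideal of the set-theoretic support of the irreducible quotient $L_c(\triv)$ of the polynomial representation $\CC[x_1,\dots,x_n]$ of the cyclotomic rational Cherednik algebra $H_c$ for $W=G(\ell,1,n)$, at a parameter $c$ tuned so that the ``equals'' reflection parameter is $1/k$ (forcing $X_{k,\ell,n}$ into the support) and the cyclotomic parameters force $Y_{m+1,n}$. The paper's own results on when the radical of the polynomial representation is a radical ideal then identify $I$ with the maximal proper $H_c$-submodule of $\CC[x_1,\dots,x_n]$. Once this identification is in hand, $I$ is an $H_c$-submodule, and the structure theory of category $\OO_c$ forces it to be generated over $H_c$ by its Dunkl-singular vectors; because Dunkl operators strictly lower polynomial degree, $H_c$-generation of a submodule of $\CC[x_1,\dots,x_n]$ coincides with generation as an ideal.

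Before invoking the representation theory, I would verify directly by pigeonhole that each $f_T \in I$. The partition $((k-1)^q,r)$ has precisely $k-1$ columns, so any $k$ indices with coinciding $\ell$th powers place two boxes in a common column of the pair $(\lambda^0,\lambda^1)$, killing a factor $(x_{T(b)}^\ell - x_{T(b')}^\ell)$. The subdiagram $\lambda^0$ has precisely $m$ columns, so any $m+1$ vanishing coordinates either put one index in $\lambda^1$ (killing $\prod_{b\in\lambda^1} x_{T(b)}$) or put two in the same column of $\lambda^0$, again killing a difference factor. Using Young's seminormal form for $G(\ell,1,n)$, I would then recognize $\mathrm{span}\{f_T\}$ as an irreducible $W$-submodule of type $\pi=(\lambda^0,\emptyset,\dots,\emptyset,\lambda^1)$, with dimension matching the number of standard Young tableaux of shape $(\lambda^0,\lambda^1)$.

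A graded character count in $\CC[x_1,\dots,x_n]$ (or equivalently a fake-degree calculation) should then confirm that, when $\ell>1$ or $m=k-1$, $\pi$ first occurs in $I$ in exactly the degree of the $f_T$. This simultaneously gives minimality (any relation $f_T=\sum p_i f_{T_i}$ with $\deg p_i>0$ would produce a copy of $\pi$ in strictly smaller degree of $I$) and, combined with the fact that the lowest singular isotypic component $H_c$-generates $I$, shows that the $f_T$ generate $I$ as an ideal. The main obstacle is the opening step: justifying the existence of a parameter $c$ at which both the set-theoretic support of $L_c(\triv)$ equals $X_{k,\ell,n}\cup Y_{m+1,n}$ and the lowest singular type in the radical is $\pi$; both depend on the nontrivial results on cyclotomic rational Cherednik algebras developed in the body of the paper.
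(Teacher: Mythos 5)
Your proposed identification of $I$ with a Cherednik-algebra submodule is the right idea but points at the wrong submodule, and the mistake is not cosmetic. You want $I=\mathrm{Rad}(\Delta_c(\mathrm{triv}))$, so that $\CC[\hh]/I=L_c(\mathrm{triv})$ and $I$ is the defining ideal of $\mathrm{supp}\,L_c(\mathrm{triv})$. But by Corollary \ref{classical radical}, when the radical of the cyclotomic polynomial representation is a radical ideal, its zero set is forced to be the closure of the stratum for $(S_k)^{\times p}\times G(\ell,1,m)$ with $p=\lfloor (n-m)/k\rfloor$, i.e.\ the variety $Z_{k,\ell,m}$ of points admitting $p$ \emph{disjoint} blocks of $k$ coordinates with equal $\ell$th powers together with $m$ further zeros. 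As soon as $n-m\geq 2k$ one has $p\geq 2$, and this variety is strictly smaller than $X_{k,\ell,n}\cup Y_{m+1,n}$, so the radical is strictly larger than $I$. There is no choice of $c$ making the two coincide in general. The identification the paper actually uses (and proves in Theorem \ref{radical socle basis}) is that $I$ is the \emph{socle} of $\CC[\hh]$, the minimal nonzero $H_c$-submodule, which is a single simple object $L_c(\lambda)$ — not the maximal proper one.

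This matters precisely at the step you lean on next: the assertion that $I$ "is generated over $H_c$ by its Dunkl-singular vectors" is automatic for a \emph{simple} module (any nonzero vector generates; a lowest-degree vector is killed by the degree-lowering Dunkl operators, so $H_c$-generation collapses to $\CC[\hh]\rtimes W$-generation, i.e.\ ideal generation), but it is not a structural feature of the radical: $H^0(\hh,\cdot)$ is only left exact, and a general submodule of $\Delta_c(\mathrm{triv})$ can have composition factors invisible to its singular vectors. Once the module is correctly taken to be the socle, your remaining steps — pigeonhole membership of the $f_T$, identifying $\mathrm{span}\{f_T\}$ as the unique lowest occurrence of $(\lambda^0,\emptyset,\dots,\emptyset,\lambda^1)$, and minimality by uniqueness of that occurrence — all track the paper's argument. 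The content you deferred as "the main obstacle" is indeed where the paper spends its effort: showing via the non-symmetric Jack polynomial analysis of the lattice of submodules of $E_1$ (Lemma \ref{E1 basis} through Theorem \ref{radical socle basis}) that the socle equals $E_1\cap Z_{1,m}$ and is the ideal of $X_{k,\ell,n}\cup Y_{m+1,n}$, together with a separate reduction of $\ell=1$ to $\ell=2$ via the coordinate-squaring map to get the generation statement there.
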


The theorem answers a question posed by Feigin and Shramov in \cite{FeSh}: in their Remark 2.21 the equalities hold (in fact, this paper started as an attempt to answer a number of questions suggested by their work and that of Feigin \cite{Fei}, all having to do with unitarity and deciding when certain reprentations of the Cherednik algebra arising from the ring of polynomial functions are irreducible). We mention two previously known consequences of this theorem: when $m=0$ and $\ell=1$ Li and Li \cite{LiLi} obtained this result (and a new proof of a graph-theoretic result of Tur\'an \cite{Tur}), and Sidman \cite{Sid} generalized their result to $m=0$ and $\ell$ arbitrary (see also \cite{ScSi} for a survey of results on subspace arrangements). In fact, both of these papers work first with much larger sets of generators, but it is not difficult to find the minimal generating set given their work (\cite{LiLi} does this explicitly). We note that de Loera \cite{deL} has conjectured (in the case $\ell=1$ and $m=0$) that these much larger sets of generators are actually Groebner bases. The generators in the conjectural Groebner basis are indexed by set partitions of $n$ with $k-1$ blocks; there are many more of these than there are standard Young tableaux on the shape $\lambda$. For instance, when $n=12$ and $k=5$  there are $462$ standard Young tableaux of shape $\lambda$, but $611,501$ set partitions of $12$ with $4$ blocks. On the other hand, already for $n=4$, $m=0$, $\ell=1$ and $k=3$, the minimal generating set appearing in our theorem is not a Groebner basis with respect to the lex order.

Our proof of the theorem is of a completely different flavor to those of Li-Li and Sidman: it is a consequence of our study of the structure of the ring of polynomials as a module over the rational Cherednik algebra for the complex reflection group $G(\ell,1,n)$. The main point, parallel to the case $\ell=1$, is that $I$ may be realized as the socle of the polynomial representation of the rational Cherednik algebra, and in fact, it may be realized as a unitary representation (our paper \cite{Gri3} contains a classification of unitary representations in category $\OO_c$ of cyclotomic rational Cherednik algebras). 

Our next theorem involves the non-symmetric Jack polynomials for the cyclotomic reflection group $G(\ell,1,n)$; for the precise definition we refer to Section \ref{cyclotomic}. We write $f_\mu$ for the non-symmetric Jack polynomial indexed by a composition $\mu \in \ZZ_{\geq 0}^n$ with the parameter $c_0$ specialized to $1/k$. If $\ell=1$ then $f_\mu$ is the classical non-symmetric Jack polynomial, and for general $\ell$ these may be expressed rather simply in terms of the $\ell=1$ version (see \cite{DuOp}). The composition $\mu$ is \emph{$(j,k)$-admissible} if for all $k \leq i \leq n$ we have $\mu^-_i \geq \mu^-_{i-(k-1)}+j$ with equality implying $$w_\mu^{-1}(i)<w_\mu^{-1}(i-(k-1)),$$ where $\mu^-$ is the non-decreasing re-arrangement of $\mu$ and $w_\mu \in S_n$ is the longest permutation with $w_\mu \cdot \mu=\mu^-$. The next theorem is a non-symmetric, cyclotomic analog of a result of Feigin-Jimbo-Miwa-Mukhin \cite{FJMM}. Its proof will be given in Section \ref{cyclotomic} after a detailed study of the socle of the polynomial representation of the rational Cherednik algebra.

\begin{theorem} \label{basis thm}
The ideal $I$ has the following vector space basis:
$$I=\CC \{f_\mu \ \vert \ \mu_{m+1}^- \geq 1 \ \hbox{and $\mu$ is $(\ell,k)$-admissible} \}.$$
\end{theorem}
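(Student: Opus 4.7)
The plan is to leverage the identification (established earlier in Section \ref{cyclotomic}) of $I$ with the socle of the polynomial representation of the cyclotomic rational Cherednik algebra $H_c$ at the parameter with $c_0 = 1/k$. Since the non-symmetric Jack polynomials $\{f_\mu\}_{\mu \in \ZZ_{\geq 0}^n}$ form a basis of $\CC[x_1,\ldots,x_n]$ consisting of joint eigenvectors for the commuting Dunkl--Cherednik operators, and $I$ is stable under those operators (being an $H_c$-submodule), $I$ automatically admits a basis of the form $\{f_\mu \ | \ \mu \in A\}$ for some set $A$ of compositions. The theorem then amounts to identifying $A$ with the set of $(\ell,k)$-admissible $\mu$ satisfying $\mu_{m+1}^- \geq 1$.

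I would proceed in three steps. First, show that $f_\mu \in I$ for each admissible $\mu$ with $\mu_{m+1}^- \geq 1$. For this I would use the explicit intertwiner formulas (in the style of Dunkl--Opdam) that describe the action of the operators $\sigma_i$ permuting the entries of $\mu$ and of the affine/cyclic intertwiner $\Phi$: starting from the Jack polynomial attached to the Garnir polynomial $f_T$ for the minimal-degree standard tableau $T$ (which lies in the socle by Theorem \ref{generation thm} and the unitarity of the socle), every composition obtained by a sequence of intertwiners that avoid a resonance with $c_0 = 1/k$ still produces an element of the socle, hence of $I$. Second, to obtain the opposite containment I would compare Hilbert series: the character of the socle is known from its description as a unitary irreducible representation in Section \ref{cyclotomic}, and one verifies that its graded dimensions agree with the number of admissible $\mu$ with $\mu_{m+1}^- \geq 1$ in each multidegree. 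Third, since the Jack polynomials are linearly independent the two steps combine to give the stated basis.

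The main obstacle will be the combinatorial bookkeeping in the first step, specifically the condition ``equality implies $w_\mu^{-1}(i) < w_\mu^{-1}(i - (k-1))$'' in the definition of admissibility. This clause encodes exactly when the intertwiner scalar at a tie in the Dunkl eigenvalues degenerates: at $c_0 = 1/k$ a resonance occurs precisely when $\mu_i^- = \mu_{i-(k-1)}^- + \ell$, and the orientation of $w_\mu$ determines whether the intertwiner pushes $f_\mu$ out of the socle (zero scalar) or keeps it inside (nonzero scalar). Propagating this from the initial singular vector along all sequences of intertwiners and verifying that the reachable set is exactly the admissible one is the delicate combinatorial step; the Hilbert-series matching, by contrast, should follow from a standard generating-function identity once the character of the unitary socle is in hand.
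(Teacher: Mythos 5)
There are several genuine gaps here, and the route you sketch is substantially different from (and less solid than) the paper's.

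First, you assert that since the $f_\mu$ are joint eigenvectors for $\ttt$ and $I$ is $H_c$-stable, $I$ ``automatically'' admits a basis of the form $\{f_\mu : \mu \in A\}$. This is false at the special parameter $c_0 = 1/k$: some $f_\mu$ have poles there, and $\ttt$ acts only upper-triangularly (not diagonalizably) on $\CC[\hh]$ in general. The paper must work to establish that the relevant submodule $E_1$ \emph{is} $\ttt$-diagonalizable, with the pole-free $f_\mu$ for $(j\ell,k)$-admissible $\mu$ as its basis (Lemma~\ref{E1 basis}, which rests on results of Dunkl and Etingof--Stoica through \cite{BGS}); only then do submodules of $E_1$ inherit Jack bases. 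Second, your Step~1 starts ``from the Jack polynomial attached to the Garnir polynomial $f_T$ ... which lies in the socle by Theorem~\ref{generation thm}.'' That is circular: the paper explicitly derives Theorem~\ref{generation thm} \emph{from} Theorem~\ref{basis thm}, not the other way around. Third, your Step~2 (matching Hilbert series against ``the character of the socle ... known from its description as a unitary irreducible'') has no independent input: the character of the socle is precisely what Theorem~\ref{basis thm} computes, and the paper never invokes an a priori character formula. The actual argument (Theorem~\ref{radical socle basis}) instead identifies the full submodule lattice of $E_1$ via the $Z'_{p,m}$, then pins down $E_1 \cap Z_{1,m} = Z'_{1,m}$ by a direct leading-monomial comparison with the monomial ideal $Z_{1,m}$, and finally shows $Z'_{1,m}$ is simple. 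Finally, you say nothing about $\ell = 1$, which the paper handles by a separate reduction through the squaring map to the $\ell = 2$ case; for $\ell = 1$ the socle need not be $\ttt$-diagonalizable in the way your plan assumes, so this case cannot be folded in silently.
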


By noting that the polynomial degree of $f_\mu$ is $\mu_1+\mu_2+\cdots+\mu_n$ this theorem in particular gives combinatorial formulas for the coefficients of the Hilbert series of $I$ (that is, for the dimensions of the homogeneous pieces of $I$). In fact, our proof of Theorem \ref{generation thm} goes via first proving Theorem \ref{basis thm}. Along the way we observe that (for $\ell >1$ or $m=k-1$) $I$ may be realized as a unitary representation of the Cherednik algebra (we then deduce the case $\ell=1$ from the case $\ell=2$). We remark that the lowest degree Jacks appearing here \emph{do} always give a minimal generating set for $I$, even in case $\ell=1$. 

We will further leverage unitarity of $I$ to get more detailed information: in the sequel \cite{FGM} to this paper we will give a non-negative combinatorial formula for the Betti numbers of $I$ in terms of Littlewood-Richardson numbers. For the case $\ell=1$ ($W=S_n$), Bowman, Norton and Simental \cite{BNS} have another approach to the calculation of these via a direct construction of the BGG resolution that was conjectured in \cite{BGS} (and together with Norton, in \cite{GrNo} we gave yet another method for producing resolutions by standard objects, but for a class of examples different from the unitary representations). The BNS approach also applies to some unitary modules for $\ell > 1$, but the conditions they impose are too strict to include, for instance, the ideal $I$ for most values of $k$, $\ell$ and $m$. 

Our next theorem concerns Cohen-Macaulayness of a certain subspace arrangement. Fix positive integers $n$, $\ell$ and $k$ with $\ell>1$ (this hypothesis could be dropped, but at the cost of more notation, and in any case is already covered by the results of \cite{EGL}), $1 \leq k \leq n$, let $m$ be an integer $0 \leq m \leq n$, and let $p=\lfloor (n-m)/k \rfloor$ be the greatest integer which is at most $(n-m)/k$. Consider the set $Z_{\ell,m,k}$ consisting of points $x=(x_1,\dots,x_n) \in \CC^n$ such that there exist disjoint subsets $S_1,\dots,S_p,T \subseteq \{1,2,\dots,n\}$ (which may depend on $x$) satisfying
\begin{itemize} 
\item[(a)] $|T|=m$ and $|S_r|=k$ for $1\leq r \leq p$,
\item[(b)] $x_i=0$ if $i \in T$, and
\item[(c)] $x_i^\ell=x_j^\ell$ if $i,j \in S_r$ for some $1 \leq r \leq p$. 
\end{itemize} Our result is:
\begin{theorem} \label{CM theorem}
The variety $Z_{k,\ell,m}$ is Cohen-Macaulay.
\end{theorem}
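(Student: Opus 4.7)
The plan is to realize the coordinate ring $\CC[Z_{k,\ell,m}]$ as the irreducible quotient of the polynomial representation of the rational Cherednik algebra of $W = G(\ell,1,n)$ at a positive parameter $c$, and then apply the Etingof--Gorsky--Losev deformation idea. Concretely, I would choose $c_0 = 1/k$ together with cyclotomic parameters tuned to $m$ so that the support variety of the irreducible quotient $L$ of the polynomial representation coincides (as a set) with $Z_{k,\ell,m}$: this follows from the combinatorial classification of supports of irreducibles in category $\OO_c$ of cyclotomic Cherednik algebras, applied to the parabolic stabilizers corresponding to the disjoint-block conditions (a)--(c). The paper's earlier results with Juteau, characterizing when the radical of the polynomial representation is itself a radical ideal, then imply that the kernel of $\CC[\hh] \twoheadrightarrow L$ is reduced at our chosen $c$, so that $\CC[Z_{k,\ell,m}] \cong L$ as a graded $\CC[\hh]$-module.

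For Cohen-Macaulayness, I would consider a one-parameter deformation $c(t)$ with $c(0) = c$ and $c(t)$ generic for $t \ne 0$, and form the associated $\CC[t]$-flat family of Cherednik algebra modules $L_{c(t)}$ obtained by taking the irreducible quotient of the polynomial representation at each $c(t)$. For generic $t$, the standard module $M_{c(t)}(\triv) \cong \CC[\hh]$ has no singular vectors and so equals $L_{c(t)}$; in particular it is free over $\CC[\hh]$ and therefore Cohen-Macaulay. The special fiber at $t = 0$ recovers our $L \cong \CC[Z_{k,\ell,m}]$. Since a flat family with constant Hilbert series preserves depth under specialization (by upper-semicontinuity of depth in flat families), the Cohen-Macaulay property of the generic fiber transfers to the special fiber, yielding the theorem.

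The hard part will be setting up the deformation carefully so that flatness really holds with the desired Hilbert series behavior: one must choose the curve $c(t)$ so that the singular vectors of the polynomial representation remain ``at the wall'' $t=0$ and do not persist for small nonzero $t$, ensuring that the Hilbert polynomial of $L_{c(t)}$ is constant on a punctured neighborhood of $0$ and jumps only in the limit. This control is where the explicit singular-vector description provided by Theorem \ref{generation thm} enters: the Garnir generators $f_T$ account precisely for the codimension condition that collapses at $t=0$, so that the family is flat in a neighborhood and the specialization argument applies. This is in the same spirit as the EGL argument for unitary minimal-support representations, adapted here to the cyclotomic setting with the radicality input from the joint work with Juteau.
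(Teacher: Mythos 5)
Your first step is on the right track and matches the paper: choose a parameter $c$ with $c_0 = 1/k$ and the cyclotomic parameters $d_j$ tuned so that, by Corollary \ref{classical radical} (i.e., the joint results with Juteau on when $\mathrm{Rad}(\Delta_c(\triv))$ is a radical ideal), the coordinate ring $\CC[Z_{k,\ell,m}]$ is realized as $L_c(\triv)$. That identification is indeed the input.

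However, the second half of your argument---the flat deformation step---has a genuine gap and is not how Etingof--Gorsky--Losev (or the paper) prove that minimal-support irreducibles are Cohen-Macaulay. The family $t \mapsto L_{c(t)}(\triv)$ you describe simply cannot be flat: for $t \neq 0$ generic the module is all of $\CC[\hh]$ (Krull dimension $n$), while at $t = 0$ it is $\CC[Z_{k,\ell,m}]$, a proper quotient of strictly smaller dimension, so the Hilbert function necessarily jumps at $t = 0$, which is exactly the failure of flatness for a graded module over $\CC[t]$. Choosing the curve $c(t)$ cleverly cannot repair this, because the jump in Hilbert function is forced by the drop in support dimension, not by an artifact of the parametrization. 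Moreover, even if one somehow produced a flat family, the openness of the Cohen--Macaulay locus in a flat family runs in the opposite direction from what you need: CM on the special fiber propagates to nearby fibers, not the other way around, so Cohen--Macaulayness of the generic fiber would not yield Cohen--Macaulayness of the special one. The EGL argument the paper actually adapts is a homological one: Ringel/homological duality $\mathrm{RHom}_{H_c}(\cdot)$ gives a derived equivalence between a block of $\OO_c$ and a block of $\OO_{c^*}$, and a minimal-support irreducible is sent to something whose codimension forces the depth estimate, giving CM. The paper then shows $L_c(\triv)$ is of minimal support in its block for $c$ in the positive cone by using the central element $z_p$ coming from the unit--counit of the restriction--induction biadjunction at $p \in \hh$ together with the relative Schur element criterion from \cite{GrJu}, and this is where projectivity of $\Delta_c(\triv)$ (hence positivity of $c$) enters. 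Your proposal is missing this homological duality step and substitutes an argument that does not hold up.
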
 In case $\ell=1$ and $m=0$ this is proved in \cite{EGL}, Proposition 3.9. We prove the theorem by using a slight variation of an idea of \cite{EGL}, that minimally supported representations of the rational Cherednik algebra are Cohen-Macaulay, together with our results \cite{GrJu} to prove that for a certain choice of deformation parameter the coordinate ring of $Z_{k,\ell,m}$ is a minimally supported representation in the principal block (noting that $Z_{k,\ell,m}$ is the closure of the set $O$ appearing in Corollary \ref{classical radical}). To state the rest of our main results we require the machinery of rational Cherednik algebras.

\subsection{Dunkl operators and Cherednik algebras} In order to make the remainder of this introduction as self-contained as possible, before stating the rest of our main theorems we must introduce some of the notation and terminology from the theory of rational Cherednik algebras. The starting point is \emph{Dunkl operators}. 

Dunkl operators were first introduced by Dunkl \cite{Dun}, whose aim was to deform harmonic analysis on the unit sphere. Each family of Dunkl operators arises from a finite group $W$ of linear transformations of a vector space, which in Dunkl's original theory was required to be a real reflection group; Dunkl and Opdam \cite{DuOp} later generalized the construction to all complex reflection groups. 

The Dunkl operators are a family of commuting operators on polynomial functions in several variables, deforming the usual partial derivatives. Given a finite linear group $W \subseteq \mathrm{GL}(\hh)$, where $\hh$ is a finite dimensional $\CC$-vector space, and a vector $v \in \hh$, the formula for the Dunkl operator $D_v$ is
$$D_v(f)=\partial_v(f)-\sum_{r \in R} c_r \alpha_r(v) \frac{f-r(f)}{\alpha_r} \quad \hbox{for $f \in \CC[\hh]$,}$$ where $\CC[\hh]$ is the ring of polynomial functions on $\hh$, $R$ is the set of reflections in $W$, and for each $r \in R$ we have fixed a number $c_r \in \CC$ and a linear form $\alpha_r \in \hh^*$ with zero set equal to the fixed space of $r$. The numbers $c_r$ are required to be constant on $W$-conjugacy classes, $c_{w r w^{-1}}=c_r$ for all $w \in W$ and $r \in R$. We will write $c=(c_r)_{r \in R}$ for the collection of all of them, and call $c$ the \emph{deformation parameter}. In this paper we will further assume that the $c_r$'s satisfy the reality condition $c_{r^{-1}}=\overline{c_r},$ where the over-bar indicates complex conjugation, whenever we must ensure that the Fourier transform and contravariant form are well-defined (this is no real loss of generality). 

The \emph{rational Cherednik algebra} is the algebra $H_c$ of operators on $\CC[\hh]$ generated by the Dunkl operators $D_v$ for $v \in \hh$, the algebra $\CC[\hh]$ acting on itself by multiplication, and the group $W$. We define \emph{category $\OO_c$} to be the category of finitely-generated $H_c$ modules on which each Dunkl operator act locally nilpotently. Thus, for example, $\CC[\hh]$ belongs to $\OO_c$. Each object of $\OO_c$ is a finitely generated graded $\CC[\hh]$ module. The grading involves certain numerical invariants: the $c$-function $c_E$ is defined to be the function which assigns the scalar by which the central element
$$z_c=\sum_{r \in R} c_r (1-r)$$ acts on the irreducible representation $E$ of $\CC W$. Thus in particular for the trivial representation $E$ we have $c_E=0$. We will write $M_d$ for the degree $d$ subspace of $M$.

The most accessible objects of $\OO_c$ are the \emph{standard modules}. Given an irreducible representation $E$ of $\CC W$ the corresponding standard module for $H_c$ is $\Delta_c(E)=\CC[\hh] \otimes E$, which may be constructed via parabolic induction. Namely, the \emph{PBW theorem} for the rational Cherednik algebra states that multiplication induces an isomorphism of vector spaces 
$$\CC[\hh] \otimes \CC W \otimes \CC[\hh^*] \cong H_c,$$ and hence
$$\Delta_c(E)=\mathrm{Ind}_{\CC[\hh^*] \rtimes W}^{H_c}(E) \cong \CC[\hh] \otimes E.$$ In particular, the defining representation $\CC[\hh]$ of $H_c$ is isomorphic to the standard module $\Delta_c(\triv) \cong \CC[\hh]$. The Cherednik algebra possesses an anti-automorphism (the \emph{Fourier transform}) which interchanges $\CC[\hh]$ and $\CC[\hh^*]$, and using it and the argument familiar from Lie theory produces a \emph{contravariant form} $(\cdot,\cdot)_c$ on $\Delta_c(E)$. The quotient of $\Delta_c(E)$ by the radical of this form is the irreducible head $L_c(E)$ of $\Delta_c(E)$, and we say that $L_c(E)$ is \emph{unitary} if the induced form is positive definite. 

Given an $H_c$-module $M$ we define $H_0(\hh^*,M)=\CC \otimes_{\CC[\hh]} M$, where $\CC$ is a $\CC[\hh]$-module via evaluation at the origin $0 \in \hh$. This defines a right-exact functor from $\OO_c$ to graded $\CC W$-modules, and we write $H_i(\hh^*,M)$ for the homology of its left derived functor. If we regard $M$ as a graded module over $\CC[\hh]$, then $H_i(\hh^*,M)=\mathrm{Tor}_i(M,\CC)$ is of graded dimension $\sum \beta_{i,j} t^j$, so we recover the graded Betti numbers from knowledge of the graded vector space $H_i(\hh^*,M)$. Similarly, we define
$$H^0(\hh,M)=\{m \in M \ | \ D_v(m)=0 \ \hbox{for all $v \in \hh$} \}.$$ This is a left-exact functor from $\OO_c$ to graded $\CC W$-modules, and we write $H^i(\hh,M)$ for the cohomology of its right derived functor. We note that there is an isomorphism of functors $\mathrm{Hom}_{H_c}(\Delta_c(E),\cdot) \cong \mathrm{Hom}_{\CC W}(E,H^0(\cdot))$, allowing us to recover the higher Ext's from standard objects from knowledge of $H^i(\hh,\cdot)$. 

The following theorem is a synthesis of Theorem \ref{duality theorem}, the material from subsection \ref{Tor and Ext}, Corollary \ref{purity}, and results of Huang-Wong \cite{HuWo} and Ciubotaru \cite{Ciu} that we have reproduced here in Section \ref{homology}. 

\begin{theorem} \label{Hodge decomposition}
Let $L$ be an irreducible object of $\OO_c$. Then there exists an isomorphism of graded $W$-modules:
$$H_i(\hh^*,L) \cong H^i(\hh,L)$$ and consequently an isomorphism of graded $W$-modules
$$\mathrm{Tor}_i(L,\CC) \cong \bigoplus_{E \in \mathrm{Irr}(\CC W)} \mathrm{Ext}^i_{\OO_c}(\Delta_c(E),L) \otimes_\CC E.$$ Moreover, for any object $M$ of $\OO_c$ the following \emph{equivariant purity property} holds: if the $E$-isotypic component of the degree $d$ piece of the Tor-group $\mathrm{Tor}_i(M,\CC)_{E,d} \neq 0$ is not zero, then $d=c_E$. Finally,  if $L$ is a unitary irreducible object of $\OO_c$, then
$$\mathrm{Ext}^i(\Delta_c(E),L) \cong \mathrm{Hom}_{\CC W} (E,L_{c_E-i} \otimes \Lambda^i \hh^*).$$ 
\end{theorem}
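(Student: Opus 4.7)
The plan is to assemble the theorem from the four ingredients cited immediately before its statement, handling one assertion at a time.

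For the graded $W$-module isomorphism $H_i(\hh^*, L) \cong H^i(\hh, L)$, I would invoke Theorem \ref{duality theorem}. Both sides are naturally computed on the common graded vector space $\Lambda^\bullet \hh^* \otimes L$: the first via the classical Koszul differential built from multiplication by a basis of $\hh^*$, and the second via the de Rham-like differential built from the commuting Dunkl operators $D_v$. The Fourier anti-involution of $H_c$ exchanges these two families of operators, and combined with non-degeneracy of the contravariant form on the irreducible $L$, it should produce a perfect pairing that identifies the two differentials and hence their (co)homologies. This is the main obstacle: for non-unitary irreducibles one does not have a positive form available for a Hodge argument, so the isomorphism must come from algebraic duality, which is what I expect Theorem \ref{duality theorem} to package cleanly.

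Part (b) then follows by deriving the adjunction $\Hom_{H_c}(\Delta_c(E), \cdot) \cong \Hom_{\CC W}(E, H^0(\hh, \cdot))$ recorded in the excerpt. Because $\CC W$ is semisimple, $\Hom_{\CC W}(E, \cdot)$ is exact, so the Grothendieck spectral sequence collapses and gives
$$\mathrm{Ext}^i_{\OO_c}(\Delta_c(E), L) \cong \Hom_{\CC W}(E, H^i(\hh, L)).$$
Decomposing $H^i(\hh, L)$ into isotypic components and transporting the result across part (a) to $H_i(\hh^*, L) = \mathrm{Tor}_i(L, \CC)$ produces the displayed direct sum. This is the content of the subsection on Tor and Ext.

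Equivariant purity is cited as Corollary \ref{purity}. Its proof uses the grading element $\h \in H_c$, whose eigenvalue on the degree-$d$ piece of $M$ can be computed either from the polynomial grading or, up to a shift, from the action of the central element $z_c$ through the $W$-structure; since the Koszul differential commutes with $\h$, the $E$-isotypic piece of $H_i(\hh^*, M)$ survives only in the internal degree where these two expressions agree, namely $d = c_E$. Finally, the unitary case comes from the results of Huang-Wong and Ciubotaru reproduced in Section \ref{homology}: positivity of the contravariant form turns $\Lambda^\bullet \hh^* \otimes L$ into an inner product space on which the Dunkl differential admits an adjoint, and the kernel of the resulting Laplacian in degree $i$ represents $H^i(\hh, L)$ and is identified as a graded $W$-module with (the $E$-multiplicity space in) $L_{c_E - i} \otimes \Lambda^i \hh^*$. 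Combining this identification with the adjunction from part (b) then yields the claimed formula for $\mathrm{Ext}^i(\Delta_c(E), L)$.
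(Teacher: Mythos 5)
Your proposal follows the paper's strategy: duality theorem plus self-duality of irreducibles for the first isomorphism, the collapsing Grothendieck spectral sequence (via semisimplicity of $\CC W$) for the Tor/Ext identification, Corollary \ref{purity} for equivariant purity, and the Huang--Wong/Ciubotaru Hodge-theoretic argument for the unitary case. The parts on duality, the Ext decomposition, and the unitary case match the paper closely in both structure and substance.

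The one place your sketch is too loose is the purity assertion. Saying that the Koszul differential commutes with a grading element $\h$ and that ``the $E$-isotypic piece of $H_i(\hh^*,M)$ survives only in the internal degree where the two eigenvalue computations agree'' does not by itself give the constraint $d=c_E$: commuting with a grading operator merely makes the homology graded, it imposes no relation between the internal degree and $c_E$. The missing step in the paper is the anticommutator identity $D_xD_y+D_yD_x=\mathrm{eu}\otimes 1+\sum 1\otimes x_iy_i-\sum_r c_r(1-r\otimes\tilde r)$, which shows this operator acts by the scalar $d-c_E$ on the $E$-isotypic part of internal degree $d$. Then, for a cocycle $c$ (so $D_x c=0$), one gets $D_xD_y(c)=(d-c_E)c$; if $d\neq c_E$ this exhibits $c$ as a coboundary, so nonzero homology classes can only occur at $d=c_E$. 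Your phrase ``up to a shift, from the action of $z_c$'' may be gesturing at this, but as written the argument does not actually force the vanishing of the Laplacian eigenvalue on homology, which is the whole content of the purity statement.
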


For the first assertion of the theorem, the fact that irreducible objects are self-dual is crucial. We remark that the assertion about unitary irreducibles in the theorem is essentially a result of \cite{Ciu} and \cite{HuWo}; it should be regarded as an analog of the Hodge decomposition theorem for compact manifolds. So our new contributions are the observations that for irreducible objects of $\OO_c$ the Tor-groups and the $\hh$-cohomology are isomorphic, and that the grading on the Tor-groups satisfies the equivariant purity property, allowing us to recover the \emph{graded} equivariant Betti table of $L$ from the Kazhdan-Lusztig character formula.
 
Our next results are our attempt to make precise the generality in which the Cherednik algebra technology (and in particular the preceding theorem) may be applied to obtain information about the ideals of subspace arrangements. There are two situations in which this is straightforward: first, if the \emph{coordinate ring} of the subspace arrangement is an irreducible object of $\OO_c$, and second if the \emph{ideal}  of the subspace arrangement is an irreducible object of $\OO_c$. Our next two main theorems deal with these two possibilities. We note that if the coordinate ring of a subspace arrangement is an irreducible object of $\OO_c$, then since it is generated by $1$ as a $\CC[\hh]$-module it must be equal to $L_c(\triv)$ and hence its support (the subspace arrangement) must be a single $W$-orbit of strata whose ideal is the radical of $\Delta_c(\triv)$ (on the other hand, when the ideal of a subspace arrangement is an irreducible object, it is not in general so easy to identify its lowest weight). For the terminology involving Schur elements of complex reflection groups and positive hyperplanes, we refer to \cite{GrJu}.

\begin{theorem} \label{radical thm}
Let $O$ be a single $W$-orbit of strata of the reflection arrangement of $W$. Suppose that for some $p \in O$, $W_p=W_1 \times \cdots \times W_p$ is the product decomposition of the stabilizer $W_p$ of $p$ into irreducible reflection subgroups, and write $\hh^*_i$ for the dual reflection representation of $W_i$. Then $\mathrm{Rad}(\Delta_c(\triv))=I(\overline{O})$ if and only if
\begin{enumerate}
\item[(a)] $c_{\hh^*_i}=1$ for all $i$, and
\item[(b)] The parameter $c$ does not belong to any positive hyperplane $H$ such that the quotient $s/s'$ of the principal Schur element $s$ for $W$ by the principal Schur element $s'$ for $W_p$ vanishes on the intersection of $H$ with the positive cone. (See \cite{GrJu} for the notation and an explicit description of the relevant Schur elements).
\end{enumerate}
\end{theorem}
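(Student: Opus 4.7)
The plan is to decompose the biconditional into two independent parts that respectively correspond to (a) and (b). Note first that $\mathrm{Rad}(\Delta_c(\triv)) = I(\overline{O})$ holds if and only if both of the following are true: (i) the ideal $I(\overline{O}) \subseteq \CC[\hh]$ is stable under all Dunkl operators, so that $\CC[\overline{O}] = \CC[\hh]/I(\overline{O})$ is naturally a quotient of $\Delta_c(\triv)$ in category $\OO_c$; and (ii) the $H_c$-module $\CC[\overline{O}]$ is irreducible, equivalently $\CC[\overline{O}] \cong L_c(\triv)$.

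For the equivalence (i) $\Leftrightarrow$ (a), I would pass to a formal neighborhood of a generic $p \in O$. There $\hh$ decomposes as $\hh^{W_p} \oplus N$, where $N$ is a $W_p$-stable transverse slice that splits as $\bigoplus_i N_i$ with each irreducible factor $W_i$ acting on $N_i$ as its reflection representation, and $I(\overline{O})$ is locally generated by the linear forms in $\bigoplus_i \hh_i^*$. Via parabolic restriction, stability of $I(\overline{O})$ under Dunkl operators reduces to each $\hh_i^*$ being a space of singular vectors inside the polynomial representation of the rational Cherednik algebra of $W_i$ for the induced parameter. A direct computation of $D_v(x)$ for $x \in \hh_i^*$ shows that this is equivalent to $z_c$ acting on $\hh_i^*$ by the scalar $1$, i.e.\ $c_{\hh_i^*}=1$, which is exactly condition (a).

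For (ii) $\Leftrightarrow$ (b), assume (a). A proper $H_c$-submodule of $\CC[\overline{O}]$ is $W$-invariant and $\CC[\hh]$-stable, hence set-theoretically supported on a proper union of closures of strata inside $\overline{O}$. So reducibility of $\CC[\overline{O}]$ is equivalent to the existence of a composition factor $L_c(E)$ with strictly smaller support. Parabolic restriction at a point $p'$ of a deeper stratum transforms this into the question of whether the polynomial representation of the parabolic Cherednik algebra (for the stabilizer $W_{p'}$) picks up an extra composition factor, which by the Schur element machinery of \cite{GrJu} occurs precisely when $c$ lies on a positive hyperplane $H$ on which the relevant Schur element quotient $s/s'$ vanishes inside the positive cone. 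The negation of all such vanishings is exactly condition (b).

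The main obstacle lies in this final identification of reducibility with vanishing of $s/s'$ on positive hyperplanes. It requires combining parabolic restriction with the explicit principal Schur element computations from \cite{GrJu}, and care to exclude vanishings outside the positive cone, which do not produce composition factors of $\CC[\overline{O}]$. By contrast, the generic-point reduction (i) $\Leftrightarrow$ (a) is essentially a formal consequence of the Bezrukavnikov--Etingof restriction functors applied to the polynomial representation, and the equivalence with the original biconditional is bookkeeping.
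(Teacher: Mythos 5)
Your proof is correct and follows essentially the same route as the paper: decompose into Dunkl-stability of $I(\overline{O})$ (Feigin's theorem / the Etingof--Stoica criterion after parabolic restriction) and irreducibility of $\CC[\overline{O}]$, equivalently $\mathrm{supp}\,L_c(\triv) \supseteq \overline{O}$ (the Griffeth--Juteau Schur element criterion). The paper simply invokes both results as black boxes, whereas you sketch re-derivations and correctly flag the Schur element step as the genuine content; no new idea is introduced.
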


We note that condition (a) of the theorem above has appeared already in the work \cite{Fei} of Feigin, who classified when the ideals of subspace arrangements are stable by the Dunkl operators (in fact, the theorem above is an immediate consequence of his work in combination with our theorem with Juteau). Aside from its application to the calculation of Betti numbers of subspace arrangements arising from reflection groups, our interest in this theorem is that it gives an explicit construction of the module $L_c(\triv)$ in a number of interesting cases.

For the groups in the infinite family $G(\ell,1,n)$ the theorem may be made much more explicit. The deformation parameter in this case is a tuple $c=(c_0,d_0,\dots,d_{\ell-1})$ (see Section \ref{cyclotomic} for this).

\begin{corollary} \label{classical radical}
Suppose $W=G(\ell,1,n)$. Let $O$ be the $W$-orbit of strata corresponding to the parabolic subgroup $(S_k )^{\times p} \times G(\ell,1,m)$. Then the radical of $\Delta_c(\triv)$ is $I(O)$ if and only if
\begin{enumerate}
\item[(a)] $c_0=1/k$, 
\item[(b)] $d_0-d_{\ell-1}+\ell (m-1)c_0=1$, 
\item[(c)] $p=\lfloor (n-m)/k \rfloor$, and
\item[(d)] no equation of the form $d_0-d_{-j}+\ell m' c_0=j$ holds, for $j$ a positive integer not congruent to $0$ mod $\ell$ and $m \leq m' \leq n-1$. 
\end{enumerate}
\end{corollary}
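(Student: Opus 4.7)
The strategy is to unpack Theorem~\ref{radical thm} in the specific case $W = G(\ell,1,n)$ with orbit $O$ corresponding to the parabolic $W_p = (S_k)^{\times p} \times G(\ell,1,m)$. The decomposition of $W_p$ into irreducible reflection subgroups produces $p$ copies of $S_k$, each acting on its own $k$-element block of coordinates via the reflection representation $\hh^*_{S_k}$ of dimension $k-1$, together with a single copy of $G(\ell,1,m)$ acting on an $m$-element block via its $m$-dimensional reflection representation. All conditions of the corollary will fall out by computing the two numerical invariants that appear in Theorem~\ref{radical thm}: the $c$-function $c_{\hh^*_i}$ on each irreducible factor, and the ratio $s/s'$ of principal Schur elements.

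For condition~(a) of the theorem, I first compute $c_{\hh^*_i}$ on each summand. On $S_k$ there is only one class of reflections (the transpositions, all carrying parameter $c_0$), and a character calculation using $\chi_{\hh^*_{S_k}}(\tau) = k-3$ together with $\dim \hh^*_{S_k} = k-1$ and the fact that there are $\binom{k}{2}$ transpositions yields $c_{\hh^*_{S_k}} = k c_0$; setting this equal to $1$ gives condition~(a) of the corollary. On $G(\ell,1,m)$ the analogous computation splits into contributions from the $\ell\binom{m}{2}$ reflections coming from the $S_m$-type classes (contributing $\ell(m-1)c_0$ to $c_{\hh^*}$) and from the $m(\ell-1)$ diagonal reflections of orders $2,\dots,\ell$; a standard bookkeeping, tracking the eigenvalues $\det(r)$ and the parameters $d_j$ in the Dunkl--Opdam normalization used in \cite{GrJu}, produces $c_{\hh^*_{G(\ell,1,m)}} = \ell(m-1)c_0 + d_0 - d_{\ell-1}$. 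Setting this equal to $1$ yields condition~(b).

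Condition~(b) of Theorem~\ref{radical thm} must then be translated into conditions~(c) and~(d). For condition~(c), observe that among the parabolics of the form $(S_k)^{\times p'}\times G(\ell,1,m)$, the identities~(a)--(b) are insensitive to the value of $p'$, so whenever $p$ is strictly smaller than $\lfloor (n-m)/k\rfloor$ there is a strictly larger such parabolic with the same orbit equations; the support of $L_c(\triv)$ is therefore the smaller orbit associated to the maximal $p$, and $\mathrm{Rad}(\Delta_c(\triv))$ can equal $I(\overline{O})$ only when $p$ is already the floor. For condition~(d), I would substitute the Malle-type product formula for the principal Schur element of $G(\ell,1,n)$ recalled in \cite{GrJu} and cancel against the corresponding product for $W_p = (S_k)^{\times p} \times G(\ell,1,m)$; the hyperbolic factors in the $G(\ell,1,n)$ Schur element that do not cancel are precisely those contributed by "missing" $G(\ell,1,m')$-type strata with $m \leq m' \leq n-1$, and they give linear factors of the form $d_0 - d_{-j} + \ell m' c_0 - j$ for positive $j \not\equiv 0 \pmod \ell$. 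Positivity of these factors on the positive cone, except along the hyperplane where one of them vanishes, is exactly condition~(d).

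The main obstacle I anticipate is the bookkeeping in the Schur-element calculation: Malle's formula for the Schur elements of $G(\ell,1,n)$ is a product over content cells of bipartitions (or, with our conventions, $\ell$-partitions), and verifying that after dividing by the Schur element for $(S_k)^{\times p}\times G(\ell,1,m)$ the surviving linear factors are exactly those listed in~(d), together with ruling out accidental cancellations from the $S_k$-factors when $c_0 = 1/k$, requires careful tracking of which factors vanish identically on $\{c_0 = 1/k\} \cap \{d_0 - d_{\ell-1} + \ell(m-1)c_0 = 1\}$. The divisibility restriction $j \not\equiv 0 \pmod \ell$ in (d) should emerge precisely because the $j \equiv 0$ factors are absorbed by the $(S_k)^{\times p}$ Schur factors under condition~(a), but confirming this, and confirming that the hyperplane quotient $s/s'$ involves no further factors, is the delicate part of the argument.
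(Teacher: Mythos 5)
Your proposal is correct and takes the same route as the paper: the paper derives Corollary~\ref{classical radical} directly from Theorem~\ref{radical thm} by inserting the explicit Schur elements for $G(\ell,1,n)$ from \cite{GrJu}, calling this an ``immediate consequence'' and giving no further details. Your sketch---computing $c_{\hh^*}$ on each irreducible factor of the parabolic for conditions (a)--(b), using the monotonicity in $p$ of the Feigin-stable ideals for (c), and identifying the surviving linear factors of the Schur-element quotient for (d)---is in fact more explicit than what the paper records.
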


We note that condition (c) determines $p$ given $k$ and $m$, independent of the parameters, which implies that only for relatively large $m$, $p$ can $I(O)$ be equal to the radical. This corollary, combined with Theorem \ref{Hodge decomposition} and the result of Rouquier-Shan-Varagnolo-Vasserot \cite{RSVV} expresses the Betti numbers of the ideal of $O$ in terms of parabolic affine Kazhdan-Lusztig polynomials (though in fact we expect a simpler closed formula should be within reach, we do not address that here).

The other possibility is that the ideal of the subspace arrangement is an irreducible object of $\OO_c$, or in other words, equal to the socle of the polynomial representation. We are able to achieve a characterization of this situation only for the groups in the infinite family $G(\ell,1,n)$; for these groups we can in fact give a much more detailed result giving a vector space basis of the socle of the polynomial representaiton for all $c$ such that $c_0 \notin \ZZ_{>0}$ (the non-symmetric Jack polynomials involved have poles at such $c_0$). The result (see Theorem \ref{socle thm}) is rather technical, so here we state only the characterization of when the socle is the ideal of a subspace arrangement. 

\begin{theorem} \label{radical socle}
Suppose $W=G(\ell,1,n)$. Then the socle of the polynomial representation is a radical ideal if and only if one of the following mutually exclusive possibilities occurs:
\begin{enumerate}
\item[(a)] It is equal to $\CC[\hh]$,
\item[(b)] there exists $2 \leq k \leq n$ such that it is the ideal of $X_{k,\ell,n}$,
\item[(c)] there exists $0 \leq m \leq n-1$ such that it is the ideal of $Y_{m+1,n}$, or
\item[(d)] there exist $2 \leq k \leq n$ and $0 \leq m \leq k-2$ such that it is the ideal of $X_{k,n} \cup Y_{m+1,n}$.  
\end{enumerate} The necessary and sufficient conditions on the deformation parameter $c$ for each case to occur are as follows:
\begin{enumerate}
\item[(a)] The parameter $c_0$ is not of the form $j/k$ for positive coprime integers $j$ and $k$ with $k \geq 2$, and no equation of the form $$d_0-d_{-p}+\ell m c_o=p$$ for integers $p$ and $m$ with $0 \leq m \leq n-1$ and $p>0$ not divisible by $\ell$ holds.
\item[(b)] We have $c_0=1/k$ for some integer $k \geq 2$ and no equation of the form $$d_0-d_{-p}+\ell m c_0=p$$ for integers $m$ and $p$ with $0 \leq m \leq n-1$, $p$ not divisible by $\ell$, and
$$p  > x \ell \quad \hbox{if $x(k-1)+1 \leq m \leq (x+1)(k-1)$ for an integer $x$ holds.}$$
\item[(c)] We have $d_0-d_{\ell-1}+\ell m c_0=1$, the parameter $c_0$ is not of the form $j/k$ for positive coprime integers $j$ and $k$ with $k \geq 2$, and no equation of the form 
$$d_0-d_{-p}+\ell m' c_0=p$$ holds for integers $m'$ and $p$ such that either $0 \leq m' < m$ or $0 \leq m' \leq n-1$ and $p>1$.
\item[(d)] We have $c_0=1/k$ for an integer $k \geq 2$ and  $d_0-d_{\ell-1}+\ell m c_0=1$, and no equation of the form $$d_0-d_{-p}+\ell m' c_0=p$$ holds for integers $m'$ and $p$ such that $p$ is not divisible by $\ell$, $0 \leq m' \leq n-1$, and for each $x \in \ZZ_{\geq 0}$
$$p>x \ell \quad \text{if} \quad x(k-1) \leq m' < x(k-1)+m$$ and $$ p>x \ell +1 \quad \text{if} \quad x(k-1)+m \leq m' < (x+1)(k-1).$$
\end{enumerate} Moreover, in each case there are choices of parameters for which the socle is unitary.
\end{theorem}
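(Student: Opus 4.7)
The plan is to deduce the theorem from the more detailed Theorem \ref{socle thm} proved later in Section \ref{cyclotomic}, which provides an explicit vector-space basis for the socle of the polynomial representation in terms of specialized non-symmetric Jack polynomials indexed by admissible compositions. The socle is always a $W$-stable $\CC[\hh]$-submodule, hence a $W$-equivariant ideal; when it is radical it must be the ideal of a reduced, $G(\ell,1,n)$-invariant union of linear subspaces, hence of a union of strata of the reflection arrangement of $G(\ell,1,n)$. The closed strata orbits for $G(\ell,1,n)$ have parabolic stabilizers of the form $S_k^{\times p} \times G(\ell,1,m)$, which produce exactly the candidate varieties $\emptyset$, $X_{k,\ell,n}$, $Y_{m+1,n}$, and $X_{k,\ell,n} \cup Y_{m+1,n}$ listed in (a)--(d); so the first task is to match the socle, as described by Theorem \ref{socle thm}, with one of these four varieties under each set of parameter conditions.

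For each case, the strategy is to use Theorem \ref{socle thm} to identify the pair of partitions $(\lambda^0,\lambda^1)$ labelling the lowest-degree $W$-isotypic component of the socle and then apply Theorem \ref{generation thm} to match the socle with the ideal of the proposed subspace arrangement. The width $k-1$ of this pair is controlled by $c_0$: columns of height greater than one in the admissible shapes arise precisely when $c_0 = 1/k$ for coprime integers with $k \geq 2$, which distinguishes (b) and (d) from (a) and (c). The presence of a nontrivial $Y_{m+1,n}$ factor is controlled by the single resonance $d_0 - d_{\ell-1} + \ell m c_0 = 1$, which distinguishes (c) and (d) from (a) and (b); the remaining forbidden equations $d_0 - d_{-p} + \ell m' c_0 = p$ are exactly those that would force additional vanishing conditions and hence produce either a strictly larger socle whose variety fails to be of the listed form, or one that is not radical. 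Mutual exclusivity then follows at once. The equivalences themselves are obtained by translating the admissibility condition of Theorem \ref{socle thm} into the displayed parameter inequalities: the inequality $\mu^-_i \geq \mu^-_{i-(k-1)} + \ell$ with equality case controlled by $w_\mu^{-1}$ encodes the $k$-equals and $\ell$-th-power vanishing, while $\mu^-_{m+1} \geq 1$ detects the $Y_{m+1,n}$ factor, and the threshold inequalities $p > x\ell$ (or $p > x\ell + 1$) in cases (b) and (d) encode how the $k$-equals block interacts with the $\ell$-quotient once $m$ or $m'$ crosses a column boundary.

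For the final assertion, the existence of unitary parameters in each case follows from the classification of unitary modules in category $\OO_c$ from \cite{Gri3}: in each case one specializes $c_0$ appropriately (either to $1/k$ or into a non-rational range) and chooses the $d_i$ in the fundamental alcove governing unitarity, verifying consistency with the case's parameter conditions by direct inspection. The main obstacle will be the detailed bookkeeping required to read off, from the admissibility condition in Theorem \ref{socle thm}, the precise list of forbidden resonance equations together with the correct threshold inequalities in cases (b) and (d); this is a combinatorial matter tracing how column heights of $(\lambda^0,\lambda^1)$ interact with the $\ell$-quotient of an admissible composition after specialization, and it is here that the asymmetry between the two threshold lines in case (d) emerges.
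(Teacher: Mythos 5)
Your overall plan is close to the paper's: both arguments hinge on Theorem \ref{socle thm}, both read off the four candidate varieties from the Jack-polynomial basis of the socle, and both cite the unitarity classification to prove the final assertion. The sufficiency direction you describe (identify the lowest $W$-isotype as the Garnir span and match it with the ideal via Theorem \ref{generation thm}) is a valid repackaging of what the paper accomplishes through Theorem \ref{radical socle basis}, since by the time this proof is written both of those results are already in place; you should, however, explicitly note that you are also using the fact that the socle, being simple, is generated by its lowest homogeneous piece, and that the heavy lifting behind Theorem \ref{generation thm} is precisely the identification $E_1 \cap Z_{1,m} = Z'_{1,m}$ established in Theorem \ref{radical socle basis}, so it is cleaner to cite that theorem directly.

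There is a genuine gap in your necessity direction, specifically in how you narrow down the candidate zero sets. You assert that the closed strata orbits have parabolic stabilizers of the form $S_k^{\times p} \times G(\ell,1,m)$ and that these ``produce exactly the candidate varieties $\emptyset$, $X_{k,\ell,n}$, $Y_{m+1,n}$, $X_{k,\ell,n}\cup Y_{m+1,n}$.'' Neither half of this is correct as stated. The full list of parabolics of $G(\ell,1,n)$ is $G(\ell,1,m)\times S_{n_1}\times\cdots\times S_{n_r}$ with an arbitrary partition $(n_1,\dots,n_r)$ of $n-m$, not only products of copies of a single $S_k$; and the variety $X_{k,\ell,n}\cup Y_{m+1,n}$ is a union of two distinct $W$-orbit closures, so it is not ``a closed strata orbit'' at all. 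More importantly, the a priori possibilities for the zero set of a $W$-stable Dunkl-stable radical ideal are all $W$-invariant unions of strata closures, a much larger collection, and you have not explained why only the four listed types can occur. The paper resolves this by splitting on whether $c_0$ has the form $j/k$: when it does, Theorem \ref{socle thm} gives an explicit Jack basis whose leading-monomial analysis pins down the zero set to $X_{k,\ell,n}$ or $X_{k,\ell,n}\cup Y_{m+1,n}$; when it does not, it invokes \cite{BeEt} to conclude the zero set is some $Y_{m+1,n}$, and then Feigin's theorem plus the submodule analysis eliminate the extra resonance equations. Your sketch omits this case split and the appeal to \cite{BeEt}, and without them the restriction to the stated four candidate varieties is not justified.
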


We may paraphrase the last assertion of the theorem as follows: for the group $W=G(\ell,1,n)$, if the ideal of a subspace arrangement is a simple representation of the Cherednik algebra for \emph{some} value of the parameter $c$, then in fact $c$ may be chosen so that it is a \emph{unitary} irreducible representation. This tendency for the socle to be unitary when it is a radical ideal does not hold for all groups $W$. In particular, Feigin and Shramov \cite{FeSh} give an example where this fails for $W$ a Weyl group of type $D$. It would be very interesting to explore this phenomenon for the exceptional groups, but at the moment we do not have the necessary tools. Combining this theorem with Theorem \ref{Hodge decomposition} is the starting point of our joint work with Fishel and Manosalva \cite{FGM}, where we will obtain a non-negative combinatorial formula for the Betti numbers of the ideals of these subspace arrangements as a consequence of our study of unitary simple modules.

\subsection{Structure of the paper} In Section \ref{radical} we recall the basic definitions in the theory of rational Cherednik algebras, characterize when the radical of its polynomial representation is a radical ideal (proving Theorem \ref{radical thm}), and show that for positive choices of the parameters $L_c(\mathrm{triv})$ is minimally supported in its block, which when applied for the cyclotomic groups proves Theorem \ref{CM theorem}. In Section \ref{homology} we relate the graded Betti numbers from commutative algebra to graded multiplicities from representation theory and in particular prove Theorem \ref{Hodge decomposition}. Finally, in Section \ref{cyclotomic} we study the cyclotomic groups in more detail, prove Theorems \ref{generation thm}, \ref{basis thm}, and \ref{radical socle} and state a number of questions and conjectures on the structure of the polynomial representation. 

\section{The rational Cherednik algebra, and when the radical is radical} \label{radical}

\subsection{} Here we will give the basic definitions around Cherednik algebras and category $\OO_c$. We refer the reader to \cite{GGOR} for more details.

\subsection{Reflection groups} Let $\hh$ be a finite dimensional $\CC$-vector space and let $W \subseteq \mathrm{GL}(\hh)$ be a finite group of linear transformations of $\hh$. The set of \emph{reflections} in $W$ is
$$R=\{r \in W \ | \ \mathrm{codim}(\mathrm{fix}_\hh(r))=1 \}.$$ The linear group $W \subseteq \mathrm{GL}(\hh)$ is a \emph{reflection group} if it is generated by $R$. We emphasize that this notion depends not only on $W$ as an abstract group, but also on its representation as a linear group. 

\subsection{Dunkl operators} With the preceding notation, for each reflection $r \in R$ we fix a linear form $\alpha_r \in \hh^*$ with zero set equal to the fixed space of $r$,
$$\{v \in \hh \ | \ r(v)=v \}=\{v \in \hh \ | \ \alpha_r(v)=0\}.$$ The form $\alpha_r$ is well-defined up to multiplication by a non-zero scalar. We also choose $c_r \in \CC$ a complex number, subject to the requirements
$$c_{w r w^{-1}}=c_r \quad \text{and} \quad c_{r^{-1}}=\overline{c_r} \quad \hbox{for all $r \in R$ and $w \in W$.}$$ The second condition is not standard; we include it so that the Fourier transform and contravariant forms are well-defined.

We write $\CC[\hh]$ for the ring of polynomial functions on $\hh$. For a vector $v \in \hh$ and $f \in \CC[\hh]$ we will write $\partial_v(f)$ for the derivative of $f$ in the direction $\hh$, defined by
$$\partial_v(f)(x)=\lim_{h \to 0} \frac{f(x+hv)-f(x)}{h}.$$ The action of $W$ on $\hh$ induces an action on $\CC[\hh]$, and for $w \in W$ and $f \in \CC[\hh]$ we write $w(f)$ for the action of $w$ on $f$. Given $v \in \hh$, we define the \emph{Dunkl operator} $D_v$ by its action on $\CC[\hh]$:
$$D_v(f)=\partial_v(f)-\sum_{r \in R} c_r \la \alpha_r,v \ra \frac{f-r(f)}{\alpha_r}.$$

\subsection{The rational Cherednik algebra} The \emph{rational Cherednik algebra} is the subalgebra $H_c(W,\hh)$ of $\mathrm{End}_\CC(\CC[\hh])$ generated by 
\begin{enumerate}
\item[(a)] The group $W$,
\item[(b)] the algebra $\CC[\hh]$,
\item[(c)] for each $v \in \hh$, the Dunkl operator $D_v$. 
\end{enumerate}

\subsection{The Euler element}

The \emph{Euler field} is the differential operator
$$\mathrm{eu}=\sum_{i=1}^n x_i \frac{\partial}{\partial x_i },$$ where $x_1,\dots,x_n$ is a basis of $\hh^*$. It belongs to $H_c$: one checks
$$\mathrm{eu}=\sum x_i D_{v_i}+\sum_{r \in R} c_r (1-r)$$ for any choice of basis $v_1,\dots,v_n$ of $\hh$ with dual basis $x_1,\dots,x_n$ of $\hh^*$. 

\subsection{Category $\OO_c$} The category $\OO_c$ is the full subcategory of $H_c$-mod whose objects are finitely generated $H_c$ modules on which each Dunkl operator acts locally nilpotently. Given an irreducible representation $E$ of $\CC W$ the \emph{standard module} $\Delta_c(E)$ is defined by
$$\Delta_c(E)=\mathrm{Ind}_{\CC[\hh^*] \rtimes W}^{H_c}(E),$$ where $\CC[\hh^*] \rtimes W$ is the subalgebra of $H_c$ generated by the Dunkl operators and $W$, and where $D_v(e)=0$ for all $e \in E$ and $v \in \hh$. These are objects of $\OO_c$. As a $\CC[\hh] \rtimes W$-module, we have
$$\Delta_c(E)=\CC[\hh] \otimes E,$$ and the Euler element acts on the polynomial degree $d$ part of $\Delta_c(E)$ by the scalar $d+c_E$. The category $\OO_c$ is the Serre subcategory of $H_c$-mod generated by the standard modules. Hence each object $M$ of $\OO_c$ is $\CC$-graded with finite dimensional weight spaces,
$$M=\bigoplus_{d \in \CC} M_d, \quad \text{where} \quad M_d=\{m \in M \ | \ (\mathrm{eu}-d)^N \cdot m=0 \ \hbox{for $N$ sufficiently large} \}.$$ Moreover, $M_d=0$ unless $d \in c_E+\ZZ_{\geq 0}$ for some irreducible representation $E$ of $W$. 

\subsection{Duality} Fix a conjugate linear $W$-equivariant isomorphism $\hh \to \hh^*$, written $v \mapsto \overline{v}$, with inverse also written $x \mapsto \overline{x}$. There is a conjugate-linear anti-automorphism of $\CC W$ determined by $w \mapsto w^{-1}$ for $w \in W$.  These maps extend to a conjugate linear anti-automorphism of $H_c$ (here it is important that we have imposed the reality condition $c_{r^{-1}}=\overline{c_r}$ on the parameters), which we also write $h \mapsto \overline{h}$. If $W$ acts irreducibly on $\hh$, this is determined up to a non-zero scalar.

For a finite-dimensional $\CC$-vector space $V$ we write 
$$V^\vee=\{f: V \to \CC \ | \ f(av_1+bv_2)=\overline{a} f(v_1)+\overline{b} f(v_2) \ \hbox{for all $v_1,v_2 \in V$ and $a,b \in \CC$} \}$$ for the conjugate linear dual of $V$. This defines a contravariant functor from the category of finite-dimensional vector spaces to itself, which is conjugate-linear on Hom sets. 

Given an object $M$ of $\OO_c$, with grading $M=\bigoplus M_d$, we set
$$M^\vee=\bigoplus M_d^\vee.$$ We might regard $M^\vee$ as a \emph{restricted dual} of $M$ via the obvious embedding of $M^\vee$ in the full conjugate linear dual of $M$. We endow $M^\vee$ with the structure of an $H_c$-module by
$$h \cdot \phi(m)=\phi(\overline{h} \cdot m) \quad \hbox{for $h \in H_c$, $\phi \in M^\vee$, and $m \in M$.}$$ This defines a contravariant functor from $\OO_c$ to itself, which is conjugate linear on Hom sets.  

\subsection{The contravariant form}

Each standard module $\Delta_c(E)$ carries a \emph{contravariant form} $(\cdot,\cdot)_c$ defined by
$$(f_1 \otimes e_1,f_2 \otimes e_2)_c=(e_1,(\overline{f_1} \cdot f_2 \otimes e_2)(0)),$$ where $(\cdot,\cdot)$ is a fixed positive definite $W$-invariant Hermitian form on $E$ and for $m \in \Delta_c(E)$ we write $m(0)$ for the evaluation of $m$ at $0 \in \hh$ (which is an element of $E$). This form is Hermitian and its radical is the radical of $\Delta_c(E)$. The quotient $L_c(E)$ of $\Delta_c(E)$ is irreducible and carries an induced non-degenerate contravariant form, which we also write $(\cdot,\cdot)_c$. When this is positive definite, we call $L_c(E)$ \emph{unitary}. In any case, the contravariant form defines an isomorphism $L_c(E) \cong L_c(E)^\vee$. 

\subsection{Parabolic subgroups} A \emph{parabolic subgroup} of $W$ is a subgroup $W_v$ of the form
$$W_v=\{w \in W \ | \ w(v)=v \}$$ for some $v \in \hh$. Given a parabolic subgroup $W'$ of $W$, the \emph{stratum} of $\hh$ associated with $W'$ is
$$S_{W'}=\{v \in \hh \ | \ W_v=W' \}.$$ Conversely, given a stratum $S$, the parabolic subgroup of $W$ associated with $S$ is 
$$W_S=W_v \quad \hbox{for some (any) $v \in S$.}$$ The maps $S \mapsto W_S$ and $W' \mapsto S_{W'}$ are inverse bijections between the set of parabolic subgroups of $W$ and the set of strata in $\hh$. Moreover, the bijection is equivariant for the $W$-action on the set of strata induced by its action on $\hh$, and the $W$-action by conjugation on the set of parabolic subgroups. We will write $R_v=R \cap W_v$ for the set of reflection in $W_v$ (by a theorem of Steinberg, $W_v$ is generated by $R_v$ if $W$ is generated by $R$).

\subsection{Parabolic subgroups and Dunkl stable ideals} Let $v \in \hh$ and let $W_v$ be the corresponding parabolic subgroup. Write $$\hh=\hh^{W_v} \oplus U,$$ where $U$ is the (unique) $W_v$-stable complement to the fixed space $\hh^{W_v}$. The following lemma appeared in \cite{BGS}; we repeat it here for the convenience of the reader.
\begin{lemma} \label{submodule construction}
Let $I \subseteq \CC[U]$ be an ideal that is stable by the action of $H_c(W_v,U)$ and let $J$ be the ideal generated by $I$ in $\CC[\hh]=\CC[\hh^{W_v}] \otimes \CC[U]$. Then the ideal 
$$K=\bigcap_{w \in W} w(J)$$ is $H_c(W,\hh)$-stable.
\end{lemma}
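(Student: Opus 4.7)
The plan is to verify the three closure properties of $K$ one at a time. Since $K$ is $W$-stable by construction and $\CC[\hh]$-stable as an intersection of ideals, the content lies in showing $K$ is preserved by all Dunkl operators. First I would use the intertwining relation $w D_v w^{-1} = D_{w(v)}$ to reduce to proving the apparently weaker statement $D_v(K) \subseteq J$ for every $v \in \hh$: for any $w \in W$ and $g \in K$, $w^{-1}(D_v(g)) = D_{w^{-1}(v)}(w^{-1}(g))$ lies in $J$ (since $w^{-1}(g) \in K$ by $W$-stability and the hypothesis applies), and as $w$ varies this places $D_v(g)$ in every $w(J)$, hence in $K$.

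Next I would exploit the decomposition $\hh = \hh^{W_v} \oplus U$ together with $R = R_v \sqcup (R \setminus R_v)$, where $R_v$ consists of the reflections whose hyperplane contains $\hh^{W_v}$. Write $v = v_0 + u$ with $v_0 \in \hh^{W_v}$ and $u \in U$. For $r \in R_v$ one has $\la \alpha_r, v_0\ra = 0$ and $r$ acts trivially on the $\CC[\hh^{W_v}]$-tensor-factor of $\CC[\hh] = \CC[\hh^{W_v}] \otimes \CC[U]$, so the $R_v$-terms of $D_v$ assemble together with $\partial_v$ into the operator $T_v := \partial_{v_0} \otimes 1 + 1 \otimes D^{W_v}_u$, giving $D_v = T_v + E_v$ with $E_v = -\sum_{r \notin R_v} c_r \la \alpha_r, v\ra \frac{1-r}{\alpha_r}$. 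Since $I$ is $H_c(W_v, U)$-stable, $T_v$ preserves $J = \CC[\hh^{W_v}] \otimes I$, so this contribution lies in $J$ automatically.

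What remains is to show $E_v(g) \in J$ for $g \in K$. Each summand has the form $c_r \la \alpha_r, v\ra (g - r(g))/\alpha_r$ with $r \notin R_v$, and $W$-stability of $K$ gives $g, r(g) \in K \subseteq J$, so $g - r(g) \in J$; furthermore $\alpha_r$ divides $g - r(g)$ in $\CC[\hh]$ since the difference vanishes on the hyperplane of $r$. The step I expect to be the main obstacle is upgrading this to $(g - r(g))/\alpha_r \in J$, which amounts to showing that $\alpha_r$ is a non-zero-divisor modulo $J$ when $r \notin R_v$. To prove this I would first observe that stability of $I$ under the Euler element of $H_c(W_v, U)$ (which reduces to the classical Euler field on $\CC[U]$) forces $I$ to be graded, so $\CC[\hh]/J = \CC[\hh^{W_v}] \otimes_\CC (\CC[U]/I)$ is a graded free $\CC[\hh^{W_v}]$-module with respect to the grading inherited from $\CC[U]/I$. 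Writing $\alpha_r = \alpha_r^0 + \alpha_r^1$ according to $\hh^* = (\hh^{W_v})^* \oplus U^*$, the summand $\alpha_r^0 \otimes 1$ preserves the $\CC[U]$-grading and acts as multiplication by the non-zero element $\alpha_r^0$ of the domain $\CC[\hh^{W_v}]$ (non-vanishing being precisely the condition $r \notin R_v$), while $1 \otimes \alpha_r^1$ strictly raises the grading; a lowest-degree argument then forces any element annihilated by $\alpha_r$ to vanish, finishing the proof.
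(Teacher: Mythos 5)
Your proof is correct and follows the same route as the paper's: reduce by $W$-equivariance to showing $D_v(K)\subseteq J$, split the Dunkl operator into the part corresponding to $R_v$ (absorbed by the $H_c(W_v,U)$-stability of $I$) and the part corresponding to $R\setminus R_v$ (handled by $f-r(f)\in J$ together with $\alpha_r$ being a non-zero-divisor modulo $J$). You additionally supply a clean grading argument on $\CC[\hh^{W_v}]\otimes(\CC[U]/I)$ proving the non-zero-divisor claim, which the paper asserts without justification.
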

\begin{proof}
By construction, $K$ is a $W$-stable ideal in $\CC[\hh]$. We need only verify that it is stable by the Dunkl operators. Let $f \in K$. By $W$-equivariance of the Dunkl operators, it suffices to verify that $D_y(f) \in J$ for all $y \in \hh$. We have
$$D_y(f)=\partial_y(f)-\sum_{r \in R_v} c_r \la \alpha_r,v \ra \frac{f-r(f)}{\alpha_r}-\sum_{r \in R \setminus R_v} c_r \la \alpha_r,v \ra \frac{f-r(f)}{\alpha_r}.$$ Now observe that $f-r(f) \in K \subseteq J$ for all $r$, and that if $\alpha_r g \in J$ for some $r \in R \setminus R_v$ and $g \in \CC[\hh]$ then $g \in J$. Thus each term in the last sum belongs to $J$. Finally, use the fact that $I$ is $H_c(W_v,U)$-stable.
\end{proof} 

Later on, we will apply this lemma to construct submodules in high rank starting with submodules in low rank. To use it, we have to start somewhere, and the next lemma states when the ideal of $0 \in \hh$ is $H_c(W,\hh)$-stable.
\begin{lemma}
The ideal $I \subseteq \CC[\hh]$ of  $0 \in \hh$ is $H_c(W,\hh)$-stable if and only if $c_E=1$ for all irreducible representations $E$ of $\CC W$ occurring in the degree one piece of $\CC[\hh]$. 
\end{lemma}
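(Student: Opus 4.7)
The plan is to reduce the question to what happens on the degree one piece $\hh^* \subset \CC[\hh]$. The ideal $I = \hh^* \cdot \CC[\hh]$ of polynomials vanishing at the origin is tautologically stable under $W$ and under multiplication by $\CC[\hh]$, so only Dunkl stability needs to be checked. Since each Dunkl operator $D_v$ has polynomial degree $-1$ (both $\partial_v$ and the divided differences $(f - r(f))/\alpha_r$ lower degree by one), for $f \in I$ with $\deg f \geq 2$ we automatically have $D_v(f) \in I$. Thus $I$ is Dunkl stable if and only if $D_v(f) = 0$ for every $v \in \hh$ and every $f \in \hh^*$, because for such $f$ the element $D_v(f) \in \CC$ lies in $I$ only when it vanishes.

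To convert this condition into one about the scalars $c_E$, I would invoke the Euler identity $\mathrm{eu} = \sum_i x_i D_{v_i} + z_c$ recalled in the subsection on the Euler element, where $\{x_i\}$ and $\{v_i\}$ are dual bases of $\hh^*$ and $\hh$. Applying both sides to $f \in \hh^*$ and using $\mathrm{eu}(f) = f$ gives
$$\sum_i x_i D_{v_i}(f) = f - z_c(f).$$
Now decompose $\hh^* = \bigoplus_E \hh^*_E$ into $W$-isotypic components. Since $z_c$ is central in $\CC W$, Schur's lemma ensures that it acts by the scalar $c_E$ on $\hh^*_E$, and hence for $f \in \hh^*_E$,
$$\sum_i x_i D_{v_i}(f) = (1 - c_E) f.$$

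Finally, since each $D_{v_i}(f)$ is a scalar and $\{x_i\}$ is a basis of $\hh^*$, the displayed identity shows that all $D_{v_i}(f)$ vanish (equivalently, $D_v(f) = 0$ for every $v \in \hh$) if and only if $(1 - c_E) f = 0$. Applied to a nonzero element in each isotypic piece, this yields the desired equivalence: $I$ is Dunkl stable precisely when $c_E = 1$ for every irreducible $E$ of $\CC W$ appearing in the degree one piece $\hh^*$ of $\CC[\hh]$. The Euler identity together with Schur's lemma carries the whole argument, so I do not expect any genuine obstacle beyond this short computation.
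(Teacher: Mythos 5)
Your argument is correct and complete. The paper itself does not prove this lemma; it simply records it as a special case of results of Etingof--Stoica and of Feigin, so there is no in-paper proof to compare against. Your reduction to degree one (using that Dunkl operators lower degree by one, so nothing above degree two can fail) followed by the Euler identity $\mathrm{eu}=\sum_i x_i D_{v_i}+z_c$ and Schur's lemma on the isotypic decomposition of $\hh^*$ is exactly the standard direct argument for this fact, and it uses only tools the paper has already set up.
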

This is a special case of an observation of Etingof-Stoica \cite{EtSt}, and of the following theorem of Feigin \cite{Fei}:
\begin{theorem} \cite{Fei}
Let $S$ be a stratum of the reflection arrangement of $W$. Suppose $W_S=W_1 \times \cdots \times W_p$ is the product decomposition of $W_S$ into irreducible reflection subgroups, and write $\hh^*_i$ for the dual reflection representation of $W_i$. The ideal $I=I(W \cdot \overline{S})$ is $H_c$-stable if and only if $c_{\hh_i^*}=1$ for all $i$. 
\end{theorem}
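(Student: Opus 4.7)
My plan is to deduce the theorem from the preceding lemma (which characterizes when the ideal of $\{0\} \in \hh$ is $H_c$-stable) together with Lemma \ref{submodule construction}, by a local-to-global argument at a point $v$ in the open stratum $S$. Fix such a $v$ and write $\hh = \hh^{W_v} \oplus U$ with $U$ the unique $W_v$-stable complement, so that $\ov{S} = \hh^{W_v}$ and the ideal $J = I(\ov{S})$ is generated inside $\CC[\hh] = \CC[\hh^{W_v}] \otimes \CC[U]$ by the maximal ideal $I_0 = (U^*) \subset \CC[U]$ of $0 \in U$. Under the product decomposition $W_v = W_1 \times \cdots \times W_p$, the decomposition $U^* = \bigoplus_i \hh_i^*$ gives the irreducible $W_v$-constituents of $U^*$; moreover the reflections in $W_j$ for $j \neq i$ act trivially on $\hh_i^*$, so the $c$-scalar of $\hh_i^*$ computed via the central element of $W_v$ coincides with $c_{\hh_i^*}$ as defined via $W_i$ alone.

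\textbf{Sufficiency.} Assuming $c_{\hh_i^*} = 1$ for all $i$, the preceding lemma applied to $W_v$ acting on $U$ gives $H_c(W_v, U)$-stability of $I_0$, since its hypothesis becomes exactly $c_E = 1$ for each irreducible constituent $E$ of $U^*$. Lemma \ref{submodule construction} then promotes $I_0$ to the $H_c(W, \hh)$-stable ideal $K = \bigcap_{w \in W} w(J) = I(W \cdot \ov{S})$, completing this direction.

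\textbf{Necessity.} For the converse, I test $H_c$-stability of $I := I(W \cdot \ov{S})$ on a carefully chosen element. Since $v$ lies in the open stratum $S$, no other translate $w(\ov{S}) \neq \ov{S}$ passes through $v$, so by prime avoidance there is an $a \in \CC[\hh]$ vanishing on every such translate with $a(v) \neq 0$. For any $x \in U^*$ the product $f = xa$ then lies in $I$. I apply $D_y$ with $y \in U$ and evaluate at $v$; all terms from $r \in R \setminus R_v$ drop out because either $r(\ov{S}) \neq \ov{S}$ and then $a(r^{-1}(v)) = 0$, or $r(\ov{S}) = \ov{S}$ and then $r^{-1}(v) \in \ov{S}$ forces $x(r^{-1}(v)) = 0$. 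For $r \in R_v$, the standard expansion
$$\frac{xa - r(x)r(a)}{\alpha_r} = \la x, \alpha_r^\vee\ra\, a + r(x) \cdot \frac{a - r(a)}{\alpha_r}$$
evaluates at $v$ to $\la x, \alpha_r^\vee\ra a(v)$, using $r(x)(v) = x(v) = 0$. What remains is
$$(D_y f)(v) = a(v)\, \la x - z_c^{W_v}(x),\, y \ra,$$
with $z_c^{W_v} = \sum_{r \in R_v} c_r (1 - r)$. Since $v \in W \cdot \ov{S}$ and $D_y f \in I$, this vanishes, and $a(v) \neq 0$ combined with non-degeneracy of the $U^* \times U$ pairing forces $z_c^{W_v}$ to act as the identity on $U^*$; this is exactly $c_{\hh_i^*} = 1$ for all $i$.

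\textbf{Main obstacle.} The main delicate point is verifying that the reflections $r \in R \setminus R_v$ contribute zero at $v$; handling the two cases $r(\ov{S}) \neq \ov{S}$ and $r(\ov{S}) = \ov{S}$ is what makes the test element $f = xa$ work. Once those terms are killed, the calculation reduces cleanly to the characterization of the ideal of $\{0\}$ from the preceding lemma.
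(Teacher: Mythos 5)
The paper does not actually prove this statement: it is quoted as a theorem of Feigin and cited to \cite{Fei}, then used as a black box (together with the author's results with Juteau) to prove the ``when the radical is radical'' theorem. So there is no in-paper proof to compare against, and your argument is an independent, self-contained verification. Having checked it, I believe it is correct, and it is a natural proof given the tools the paper sets up.

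On the details: the identification $\ov{S}=\hh^{W_v}$ and $J=I(\ov S)=(U^*)\CC[\hh]$ is right, and $\bigcap_w w(J)=I(W\cdot\ov S)$. For sufficiency, the decomposition $U^*=\bigoplus_i \hh_i^*$ is multiplicity free with $W_j$ $(j\ne i)$ acting trivially on $\hh_i^*$, so the $W_v$- and $W_i$-versions of the $c$-function agree on $\hh_i^*$; invoking the preceding (Etingof--Stoica) lemma for $(W_v,U)$ and then Lemma \ref{submodule construction} is exactly the intended use of those two lemmas. For necessity, the test function $f=xa$ works: since $v$ is in the open stratum, $v\notin w(\ov S)$ for $w(\ov S)\ne\ov S$, so $a$ exists (avoidance of a single maximal ideal, not really ``prime avoidance''); the term-by-term analysis is sound --- for $r\notin R_v$ one has $\alpha_r(v)\ne 0$ so evaluation is legal and the two cases $r(\ov S)\ne\ov S$ and $r(\ov S)=\ov S$ kill $f(r^{-1}(v))$, and for $r\in R_v$ the expansion $\frac{xa-r(x)r(a)}{\alpha_r}=\la x,\alpha_r^\vee\ra a+r(x)\frac{a-r(a)}{\alpha_r}$ together with $r(x)(v)=x(v)=0$ gives $(D_y f)(v)=a(v)\la x-z_c^{W_v}(x),y\ra$. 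Nondegeneracy of $U^*\times U\to\CC$ and $W_v$-equivariance of $z_c^{W_v}$ (so $z_c^{W_v}(U^*)\subseteq U^*$) then force $z_c^{W_v}|_{U^*}=\mathrm{id}$, i.e.\ $c_{\hh_i^*}=1$ for all $i$. As a bonus, your necessity computation specialized to $S=\{0\}$, $U=\hh$, $a=1$ also proves the necessity direction of the preceding lemma, making the two halves of your argument self-reinforcing.
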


\subsection{When the radical of $\CC[\hh]$ is a radical ideal} We now combine Feigin's theorem with our joint results with Juteau to obtain the promised characterization.

\begin{theorem}
Let $S$ be a stratum of the reflection arrangement of $W$. Suppose $W_S=W_1 \times \cdots \times W_p$ is the product decomposition of $W_S$ into irreducible reflection subgroups, and write $\hh^*_i$ for the dual reflection representation of $W_i$. Then $\mathrm{Rad}(\Delta_c(\triv))=I(W \cdot \overline{S})$ if and only if
\begin{enumerate}
\item[(a)] $c_{\hh^*_i}=1$ for all $i$, and
\item[(b)] The parameter $c$ does not belong to any positive hyperplane $H$ such that the quotient $s/s'$ of the principal Schur element $s$ for $W$ by the principal Schur element $s'$ for $W_S$ vanishes on the intersection of $H$ with the positive cone. 
\end{enumerate} Moreover, if the radical is a radical ideal, it must be of the form $I(W \cdot \overline{S})$ for some stratum $S$.
\end{theorem}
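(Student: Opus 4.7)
The plan is to deduce the theorem directly by combining the two ingredients just assembled in this section: Feigin's theorem above and the joint work with Juteau in \cite{GrJu}. The first step is to decouple conditions (a) and (b). Since every $H_c$-submodule of the standard module $\Delta_c(\triv)=\CC[\hh]$ is an ideal of $\CC[\hh]$, the identity $\mathrm{Rad}(\Delta_c(\triv))=I(W\cdot\overline{S})$ is equivalent to the conjunction of (i) $I(W\cdot\overline{S})$ being $H_c$-stable, so that it is contained in $\mathrm{Rad}(\Delta_c(\triv))$, and (ii) the quotient $M:=\CC[\hh]/I(W\cdot\overline{S})$ being simple, equivalently $M=L_c(\triv)$.

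Feigin's theorem above identifies (i) exactly with condition (a). Assuming (i), the task reduces to identifying (ii) with condition (b). Since $M$ is a quotient of $\Delta_c(\triv)$ by an $H_c$-stable ideal, its simple head is $L_c(\triv)$; furthermore $M$ is by construction the structure sheaf of the reduced subscheme $W\cdot\overline{S}$, and so is generically reduced of multiplicity one along $\overline{S}$. Therefore (ii) holds if and only if $L_c(\triv)$ itself has support $W\cdot\overline{S}$ and generic multiplicity one along $\overline{S}$. I would translate this using the Bezrukavnikov--Etingof parabolic restriction functor: the generic multiplicity of $L_c(\triv)$ along $\overline{W\cdot S}$ is equal to $\dim L_{c'}(\triv_{W_S})$, where $c'$ denotes the restricted deformation parameter on $W_S$. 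The content of \cite{GrJu} I would quote is that this dimension equals one precisely when $c$ avoids every positive hyperplane on which the Schur element ratio $s/s'$ vanishes inside the positive cone, which is exactly condition (b).

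For the final assertion, suppose $\mathrm{Rad}(\Delta_c(\triv))$ is a radical ideal. Then $L_c(\triv)=\CC[\hh]/\mathrm{Rad}(\Delta_c(\triv))$ is, by definition, the reduced coordinate ring of its support, so $\mathrm{Rad}(\Delta_c(\triv))$ is determined by that support. Because $L_c(\triv)$ is a simple object of $\OO_c$, its support is a single $W$-orbit of stratum closures $W\cdot\overline{S}$ by the Bezrukavnikov--Etingof support theorem. Hence $\mathrm{Rad}(\Delta_c(\triv))=I(W\cdot\overline{S})$ for some stratum $S$, and the equivalence just proved forces (a) and (b) on $c$.

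The main obstacle I anticipate is the precise matching between condition (b) and the generic multiplicity one statement: extracting the sharp form of the Schur element ratio criterion from the calculations in \cite{GrJu} is where the real technical content lives. Everything else---Feigin's theorem, the Bezrukavnikov--Etingof restriction functor, and the classification of supports of simples in $\OO_c$---is ready to use off the shelf, so once the Schur element dictionary is unpacked the remaining argument is essentially formal.
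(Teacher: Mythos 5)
Your overall strategy — decouple the problem into (i) $I(W\cdot\overline{S})$ is $H_c$-stable, identified with (a) via Feigin, and (ii) $\CC[\hh]/I(W\cdot\overline{S})$ is simple, to be identified with (b); then handle the ``moreover'' via the Bezrukavnikov--Etingof support theorem — is sound and matches the paper's architecture. But you take an unnecessary and somewhat shaky detour in identifying (ii) with (b). You translate simplicity of $M=\CC[\hh]/I(W\cdot\overline{S})$ into ``$L_c(\triv)$ has support $W\cdot\overline{S}$ and generic multiplicity one along $\overline{S}$,'' and then try to read the multiplicity off as $\dim L_{c'}(\triv_{W_S})$ via parabolic restriction. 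Two problems: first, the parabolic restriction $\mathrm{Res}_p(L_c(\triv))$ is a quotient of $\Delta_{c'}(\triv)$ but need not equal $L_{c'}(\triv)$, so the claimed equality of the generic rank with $\dim L_{c'}(\triv)$ is not automatic; second, what \cite{GrJu} actually delivers (via the central element $z_p$ arising from the unit-counit composition for $(\mathrm{Res}_p,\mathrm{Ind}_p)$ and the relative principal Schur element) is a criterion for whether $p$ lies in $\mathrm{supp}\,L_c(\triv)$ — not a computation of the multiplicity. You flag exactly this translation as ``where the real technical content lives,'' and you are right to worry.

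The point you are missing is that the multiplicity statement is not needed as an input at all. Once (a) gives $I(W\cdot\overline{S})\subseteq\mathrm{Rad}(\Delta_c(\triv))$, the reverse containment follows from the Nullstellensatz alone as soon as you know $\mathrm{supp}\,L_c(\triv)\supseteq W\cdot\overline{S}$: indeed $\mathrm{Rad}(\Delta_c(\triv))\subseteq\sqrt{\mathrm{Rad}(\Delta_c(\triv))}=I\bigl(V(\mathrm{Rad}(\Delta_c(\triv)))\bigr)=I\bigl(\mathrm{supp}\,L_c(\triv)\bigr)\subseteq I(W\cdot\overline{S})$. And the support containment $\mathrm{supp}\,L_c(\triv)\supseteq W\cdot\overline{S}$ given (b) is precisely what \cite{GrJu} provides. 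So the ``generic multiplicity one'' property of $L_c(\triv)$ is a \emph{consequence} of (a) and (b), not an independent criterion you need to verify; replacing your multiplicity step by this two-line Nullstellensatz argument closes the gap and recovers the paper's proof. Your treatment of the converse direction and of the final ``moreover'' assertion is correct and agrees with the paper.
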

\begin{proof}
Suppose (a) and (b) hold. By condition (a), the radical contains the ideal $I(W \cdot \overline{S})$. When condition (b) holds, our result with Juteau implies that the support of $L_c(\triv)$ contains $W \cdot \overline{S}$, which gives the other containment. Conversely, assuming the radical is a radical ideal, by Bezrukavnikov-Etingof \cite{BeEt} it must be equal to some ideal $I(W \cdot \overline{S})$ for a single stratum $S$. By Feigin's theorem (a) holds, and by our theorem with Juteau, (b) holds.
\end{proof} We note that Corollary \ref{classical radical} is an immediate consequence, using the explicit form of the Schur elements for the group $G(\ell,1,n)$ from \cite{GrJu}.

\subsection{Cohen-Macaulayness of $L_c(\mathrm{triv})$}

We will use the following result, which is a slightly modification of Theorem 1.2 of \cite{EGL}. The proof is essentially the same as the homological duality proof in loc. cit., replacing the derived equivalence $\mathrm{RHom}(H,\cdot)$ between $\OO_c$ and $\OO_{c^*}$ there with the derived equivalence it induces between a block of $\OO_c$ and the corresponding block of $\OO_{c^*}$. 
\begin{theorem}  \cite{EGL}
Suppose $L$ is an irreducible object of $\OO_c$ which is of minimal support in its block. Then as a $\CC[\hh]$-module, $L$ is Cohen-Macaulay.
\end{theorem}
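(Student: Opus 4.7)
The plan is to follow the homological duality proof of \cite[Theorem 1.2]{EGL} essentially verbatim, replacing the global derived equivalence $\mathbb{D} = \mathrm{RHom}_{H_c}(H,-) \colon D^b(\OO_c) \xrightarrow{\sim} D^b(\OO_{c^*})^{\mathrm{op}}$ from \cite{GGOR} used there with the equivalence it induces between the block of $\OO_c$ containing $L$ and the corresponding block of $\OO_{c^*}$. Setting $d = \dim \mathrm{Supp}(L)$ and $n = \dim \hh$, the goal reduces to showing $\mathrm{Ext}^i_{\CC[\hh]}(L,\CC[\hh]) = 0$ for $i \neq n - d$.

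The first step is to check that $\mathbb{D}$ does restrict to a derived equivalence between the block $\mathcal{B} \subseteq \OO_c$ containing $L$ and a corresponding block $\mathcal{B}^* \subseteq \OO_{c^*}$. Since $\mathbb{D}$ is given by $\mathrm{RHom}_{H_c}(H,-)$ for a projective generator $H$ of $\OO_c$, it intertwines the natural actions of the Bernstein centers of $H_c$ and $H_{c^*}$; these centers surject onto the centers of the respective category $\OO$'s and govern the block decompositions, so $\mathbb{D}$ preserves generalized central characters and carries each block to a block. Standard properties of $\mathbb{D}$ would then identify $\mathbb{D}(L)$, up to a homological shift, with an irreducible object of $\mathcal{B}^*$ that is again minimally supported within $\mathcal{B}^*$.

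Once the block-restricted duality is established, I would invoke the remainder of the argument of \cite{EGL} without modification: the commutative-algebra groups $\mathrm{Ext}^i_{\CC[\hh]}(L, \CC[\hh])$ are related via $\mathbb{D}$ to Ext groups inside $\mathcal{B}^*$ computed against the minimally supported simple $\mathbb{D}(L)$, and these vanish outside the single homological degree $n - d$ dictated by the support codimension of $L$, since within $\mathcal{B}^*$ no simples of strictly smaller support exist.

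The main obstacle is precisely this block-restriction step: one must verify that $\mathbb{D}$ commutes with the Bernstein centers in a sufficiently strong sense to transport the property of being \emph{minimally supported in a block} through the duality. All subsequent bookkeeping --- the translation of commutative-algebra Ext's to categorical Ext's, the degree shifts, and the final support-based vanishing --- is inherited unchanged from \cite{EGL}.
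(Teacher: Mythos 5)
Your proposal matches the paper's approach: the paper likewise declares the proof to be that of \cite[Theorem 1.2]{EGL} verbatim, with the sole modification that the derived equivalence $\mathrm{RHom}(H,\cdot)$ between $\OO_c$ and $\OO_{c^*}$ is replaced by the equivalence it induces on the block containing $L$. You correctly reduce to the Ext-vanishing criterion for Cohen-Macaulayness and correctly identify the block-restriction as the only new ingredient. One small remark: the Bernstein-center argument you sketch for why the equivalence restricts to blocks is more elaborate than necessary --- any derived equivalence automatically respects the decomposition of a category into indecomposable direct summands (its blocks), since $D^b(\mathcal{B}_1 \oplus \mathcal{B}_2) \cong D^b(\mathcal{B}_1) \oplus D^b(\mathcal{B}_2)$ and indecomposability is preserved; so no explicit compatibility with central characters needs to be verified.
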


The \emph{positive cone} is the set of parameters $c$ satisfying $c_{H,\chi} \geq 0$ for all pairs $(H,\chi)$ as in \cite{GrJu}. For $c$ in the positive cone, the polynomial representation $\Delta_c(\mathrm{triv})$ is a projective object of $\OO_c$.
\begin{theorem}
Suppose the parameter $c$ belongs to the positive cone. Then the module $L_c(\mathrm{triv})$ is of minimal support in its block, and hence is Cohen-Macaulay.
\end{theorem}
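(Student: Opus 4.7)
The plan is to establish that $L_c(\triv)$ has minimal support in its block $\mathcal{B}$ of $\OO_c$; the Cohen-Macaulay conclusion then follows immediately from the preceding EGL-type theorem. The strategy has two stages: first, use BGG reciprocity together with the given projectivity of $\Delta_c(\triv)$ to establish that $\triv$ is a minimum of the $c$-order on the labels of $\mathcal{B}$; second, convert this to minimality of support via parabolic restriction and induction on $\dim \hh$.

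For the first stage, since $\Delta_c(\triv)$ is projective with simple head $L_c(\triv)$, it must equal the projective cover $P_c(\triv)$. BGG reciprocity in the highest weight category $\OO_c$ then yields
$$[\Delta_c(E) : L_c(\triv)] = (P_c(\triv) : \Delta_c(E)) = (\Delta_c(\triv) : \Delta_c(E)) = \delta_{E,\triv},$$
since the standard filtration of $P_c(\triv) = \Delta_c(\triv)$ has length one. Hence $L_c(\triv)$ appears as a composition factor of $\Delta_c(E)$ only when $E = \triv$, so $\triv$ is a minimum of the $c$-order restricted to the labels occurring in $\mathcal{B}$.

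For the second stage, fix a stratum $S \subseteq \mathrm{Supp}(L_c(\triv))$ with generic point $b$ and parabolic $W_b$, and consider the Bezrukavnikov-Etingof restriction $\mathrm{Res}_b : \OO_c(W,\hh) \to \OO_c(W_b, U_b)$, which is exact and has biadjoint induction $\mathrm{Ind}_b$ (Shan). From the identification $\mathrm{Res}_b(\Delta_c(\triv_W)) = \Delta_c(\triv_{W_b})$ and the fact that positivity of the parameter descends to $W_b$, the restricted polynomial module $\Delta_c(\triv_{W_b})$ is again projective; by induction on $\dim \hh$ the simple $L_c(\triv_{W_b})$ is of minimal support in its block of $\OO_c(W_b)$. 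Combining this with block-preservation properties of $\mathrm{Res}_b$ (Losev, Shan), one argues that every $L_c(E) \in \mathcal{B}$ satisfies $\mathrm{Res}_b(L_c(E)) \neq 0$, i.e.\ $S \subseteq \mathrm{Supp}(L_c(E))$; varying $S$ through the strata of $\mathrm{Supp}(L_c(\triv))$ then yields $\mathrm{Supp}(L_c(\triv)) \subseteq \mathrm{Supp}(L_c(E))$, which is exactly the minimal-support property.

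The main obstacle will be the block-preservation step: showing that no simple in $\mathcal{B}$ restricts to zero under $\mathrm{Res}_b$ whenever $b$ lies in $\mathrm{Supp}(L_c(\triv))$. This requires the nontrivial input that $\mathrm{Res}_b$ induces a functor from $\mathcal{B}$ into the block of $L_c(\triv_{W_b})$ in $\OO_c(W_b)$ which is faithful on simples, something that should follow from general block-compatibility results for parabolic restriction in the positive-cone regime (coupled with the minimality of $\triv$ established in stage one, which pins down the target block). Granted this, the EGL-type theorem recalled above immediately gives Cohen-Macaulayness of $L_c(\triv)$.
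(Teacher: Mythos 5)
Your proposal diverges substantially from the paper's proof, and contains a genuine gap which you yourself flag but do not close. The paper argues as follows: for a point $p \in \hh$, the unit-counit composition $1 \to \mathrm{Ind}_p \mathrm{Res}_p \to 1$ gives a central element $z_p$ of $\OO_c$ acting by a scalar on each block. If $p \notin \mathrm{supp}(L')$ for some simple $L'$ in the block of $L_c(\triv)$, then $z_p$ kills $L'$, so the scalar is zero on the whole block; but on $L_c(\triv)$ this scalar is the relative principal Schur element $|W:W_p|_{q_c}$, so that must vanish, and then Corollary~3.3 of \cite{GrJu} (using projectivity of $\Delta_c(\triv)$ for positive $c$) gives $p \notin \mathrm{supp}(L_c(\triv))$. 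The contrapositive is exactly minimality of support. This completely sidesteps any question of how $\mathrm{Res}_p$ interacts with blocks.

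Your Stage~1 (projectivity forces $\Delta_c(\triv)=P_c(\triv)$, and then BGG reciprocity gives $[\Delta_c(E):L_c(\triv)]=\delta_{E,\triv}$) is correct, but minimality in the highest-weight order does not by itself say anything about supports, so this does not do the work you need. The critical gap is in Stage~2, precisely where you say the ``block-preservation step'' is the obstacle: what you need is that $\mathrm{Res}_b(L_c(E)) \neq 0$ for every simple $L_c(E)$ in the block of $L_c(\triv)$ and every generic $b$ in $\mathrm{supp}(L_c(\triv))$. But this assertion \emph{is} minimal support of $L_c(\triv)$, so invoking it as a consequence of unspecified ``general block-compatibility results'' makes the argument circular. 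Parabolic restriction genuinely can kill simples in a block, and there is no general principle forcing it to be faithful on the simples of a given block; the whole difficulty of the statement is to show that it is in this case. The paper's device of the central element $z_p$ and its explicit evaluation as a relative Schur element is precisely the input that replaces this missing step, and combining it with the Schur-element criterion from \cite{GrJu} is what makes the argument close. As written, your proposal identifies the right target but does not supply the mechanism that proves it.
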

\begin{proof}
Fix a point $p \in \hh$, and consider the unit-co-unit composition for the restriction-induction bi-adjunction $(\mathrm{Res}_p,\mathrm{Ind}_p)$, $$1 \to \mathrm{Ind}_p \mathrm{Res}_p \to 1.$$ This gives an element $z_p$ of the center of $\OO_c$, which must therefore act as a scalar on each block. Given an irreducible module $L$ of the rational Cherednik algebra, if $p \notin \mathrm{supp}(L)$ then this scalar must be zero on the block to which $L$ belongs. But for $L=L_c(\mathrm{triv})$, this scalar is the relative principal Schur element $|W:W_p|_{q_c}$. If this is zero then $p \notin \mathrm{supp}(L)$: in this case, since $c$ is positive $\Delta=\Delta_c(\mathrm{triv})$ is projective, and by Corollary 3.3 of \cite{GrJu} $p$ is not contained in the support of $L_c(\mathrm{triv})$. 
\end{proof}
Theorem \ref{CM theorem} follows by combining this with Corollary \ref{classical radical}, noting that the positive cone consists of those parameter $(c_0,d_0,\dots,d_{r-1})$ such that     $c_0 \geq 0$ and $d_i \geq d_{i+1}$ for $0 \leq i \leq \ell-2$. 

\section{Homology} \label{homology}

\subsection{Conventions for graded algebras and graded modules} We will work throughout with $\ZZ$-graded algebras and $\CC$-graded modules. 

\subsection{Betti numbers} Let $M$ be a $\CC$-graded $S=\CC[x_1,\dots,x_n]$-module and let $\CC$ be the one-dimensional $S$-module on which each $x_i$ acts as $0$. We define $F(M)=M \otimes_S \CC$, which is a graded vector space. This defines a functor from the category $S$-modg of finitely generated graded $S$-modules to the category Vectg of finite-dimensional graded vector spaces. It is right exact, and its left derived functor $LF$ is then a functor  $LF:D^b($S$\text{-modg}) \to D^b(\mathrm{Vectg})$ from the bounded derived category of graded $S$-modules to the bounded derived category of finite dimensional graded vector spaces. The Tor groups $\mathrm{Tor}_i(M,\CC)$ are the homology groups of the complex $LF(M)$; these are then graded vector spaces. The \emph{graded Betti numbers} of $M$ record the graded dimensions of these Tor groups. More precisely, the graded Betti numbers of $M$ are the dimensions
$$\beta_{i,j}=\mathrm{dim}(\mathrm{Tor}_i(M,\CC)_j),$$ where we write $\mathrm{Tor}_i(M,\CC)_j$ for the degree $j$ piece of the Tor group. 

In our applications, we will always work with modules $M$ that are also equipped with an action of a finite group $W$ of automorphisms of $S$. We package this extra structure as follows: let $S \rtimes W$ be the twisted group ring of $W$ over $S$: as a vector space, this is simply $S \otimes \CC W$, with multiplication given by
$$(s_1 \otimes w_1) \cdot (s_2 \otimes w_2)=s_1 w_1(s_2) \otimes w_1 w_2.$$ The injection $S \hookrightarrow S \rtimes W$ induces an embedding of the category $S \rtimes W$-modg of finitely generated graded $S \rtimes W$-modules into $S$-modg. We will write $F$ also for the functor $F$ induces from $S \rtimes W$-modg to the category $\CC W$-modg of finite-dimensional graded $\CC W$-modules. Thus for a finitely generated graded $S \rtimes W$-module $M$ the Tor groups $\mathrm{Tor}_i(M,\CC)$ are finite dimensional graded $\CC W$-modules.

\subsection{$\hh^*$-homology and Tor groups} Since every object of $\OO_c$ is a finitely-generated graded $S=\CC[\hh]$-module, we have an embedding $e:\OO_c \hookrightarrow S \rtimes W\mathrm{-grmod}$ of $\OO_c$ into the category of finitely generated graded $S \rtimes W$-modules. This embedding is exact and maps projective objects in $\OO_c$ to free $S$-modules, and it follows that the left derived functors of the composite $\CC \otimes e(\cdot)$ are the Tor groups $\mathrm{Tor}_i(e(\cdot),\CC)$. These left derived functors are denoted $H_i(\hh^*,M)$, and we refer to them as the \emph{$\hh^*$-homology} of $M$. Thus for $M \in \OO_c$ we have an equality
$$\mathrm{Tor}_i(M,\CC)=H_i(\hh^*,M) \quad \hbox{for all $i \in \mathbf{Z}$,}$$ where on the left-hand side we regard $M$ simply as an $S=\CC[\hh]$-module. Here and in the rest of the paper we will omit mention of the forgetful functor taking an object $M$ of $\OO_c$ to a graded $\CC[\hh] \rtimes W$-module.

\subsection{$\hh$-cohomology, and duality} Given a module $M$ in $\OO_c$, we define its \emph{$\hh$-cohomology} as follows: firstly,
$$H^0(\hh,M)=\{m \in M \ | \ ym=0 \quad \hbox{for all $y \in \hh$} \}.$$ Evidently this is a left-exact functor from $\OO_c$ to the category $\CC W$-modg of graded $\CC W$-modules, and abbreviating it by $G(M)=H^0(\hh,M)$ we write $RG$ for its right derived functor. We define $H^i(\hh,M)$ to be the $i$th cohomology group of the complex $RG(M)$. The duality $M \mapsto M^\vee$ on category $\OO_c$ relates the functors $RG$ and $LF$ as follows:

\begin{theorem} \label{duality theorem} There is an isomorphism of functors 
$$LF(M)^\vee \cong RG(M^\vee).$$
\end{theorem}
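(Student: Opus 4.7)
The plan is to deduce the isomorphism from a natural isomorphism of the zeroth-level functors, combined with the fact that duality on $\OO_c$ sends projective resolutions to injective resolutions.

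\emph{Step 1 (identification at zero).} First I would establish a natural isomorphism $F(M)^\vee \cong G(M^\vee)$ of functors $\OO_c \to \CC W\text{-modg}$. Since $F(M)=M/\hh^{*}M$, its conjugate dual is identified with the subspace $\{\phi \in M^\vee : \phi|_{\hh^{*}M}=0\}$. On the other hand, unpacking the defining formula $(h\cdot\phi)(m)=\phi(\overline{h}\cdot m)$ for the action on $M^\vee$, a functional $\phi\in M^\vee$ is annihilated by all $y\in\hh$ if and only if $\phi(\overline{y}\,m)=0$ for all $y\in\hh$ and $m\in M$. The bar map identifies $\hh$ with $\hh^{*}$, and $\overline{y}\in\hh^{*}\subset \CC[\hh]$ acts on $M$ by multiplication by this linear form; so the condition is exactly that $\phi$ vanish on $\hh^{*}M$. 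The resulting bijection $G(M^\vee)\cong F(M)^\vee$ is clearly functorial in $M$.

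\emph{Step 2 (duality exchanges projectives and injectives).} Next I would invoke the standard fact that $(\cdot)^\vee : \OO_c \to \OO_c$ is a conjugate-linear contravariant exact equivalence squaring to the identity, which exchanges projective and injective objects. This is a formal consequence of the highest-weight category structure on $\OO_c$, together with the fact that $\OO_c$ has enough projectives (\cite{GGOR}). In particular, if $P_\bullet \to M$ is any projective resolution in $\OO_c$, then $M^\vee \to P_\bullet^\vee$ is an injective resolution.

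\emph{Step 3 (assembly).} Fix a projective resolution $P_\bullet\to M$ in $\OO_c$. Then $LF(M)$ is represented by the chain complex $F(P_\bullet)$, and after reindexing chain degree as cochain degree the cochain complex $F(P_\bullet)^\vee$ represents $LF(M)^\vee$ in the derived category of graded $\CC W$-modules (since $(\cdot)^\vee$ is exact on this target category). Applying the natural isomorphism of Step 1 termwise gives an isomorphism of cochain complexes $F(P_\bullet)^\vee \cong G(P_\bullet^\vee)$. By Step 2 the complex $P_\bullet^\vee$ is an injective resolution of $M^\vee$, so $G(P_\bullet^\vee)$ represents $RG(M^\vee)$. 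Combining these, $LF(M)^\vee \cong RG(M^\vee)$ as desired, and naturality in $M$ is inherited from the naturality in Step 1. The only nontrivial ingredient is Step 2, which is standard for highest-weight categories and can be cited rather than reproved; minor care with grading conventions on $M^\vee$ (where $\mathrm{eu}$-eigenvalues are complex-conjugated) is needed but causes no difficulty since the eigenvalues of $\mathrm{eu}$ on objects of $\OO_c$ are real.
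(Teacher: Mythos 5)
Your proof is correct and follows essentially the same route as the paper: both hinge on the underived identification $F(M)^\vee \cong G(M^\vee)$ (equivalently $F(M) \cong G(M^\vee)^\vee$), which comes from recognizing $H^0(\hh,M^\vee)$ as the annihilator of $\hh^*M$, and then pass to derived functors. The only difference is that you spell out the passage from underived to derived functors (the projective-to-injective exchange under $(\cdot)^\vee$ and the termwise application along a resolution), which the paper compresses into a single "hence" -- a reasonable bit of extra care, but not a genuinely different argument.
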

\begin{proof}
By definition, the canonical map
$$M \to (M^\vee)^\vee \quad m \mapsto [\phi \mapsto \overline{\phi(m)}]$$ is an isomorphism. By observing that the annihilator of $\hh^* M$ in $M^\vee$ is $H^0(\hh,M^\vee)$, we obtain an induced isomorphism of functors
$$H_0(\hh^*,M)=M / \hh^* M \cong (M^\vee)^\vee/H^0(\hh,M^\vee)^\perp=H^0(\hh,M^\vee)^\vee,$$ and hence an isomorphism of derived functors $LF(M) \cong RG(M^\vee)^\vee$. Taking duals produces the desired isomorphism $LF(M)^\vee \cong RG(M^\vee)$. 
\end{proof}

\subsection{$\hh$-cohomology and Ext groups} 

The functor $H^0(\hh,M)$ decomposes as a direct sum
$$H^0(\hh,M)=\bigoplus_{E \in \mathrm{Irr}(\CC W)} \mathrm{Hom}_{\CC W}(E,H^0(\hh,M)) \otimes E$$ implying
$$H^i(\hh,M)= \bigoplus_{E \in \mathrm{Irr}(\CC W)} \mathrm{Hom}_{\CC W}(E,H^i(\hh,M)) \otimes E$$There is an isomorphism of functors 
$$ \mathrm{Hom}_{\CC W}(E,H^0(\hh,M)) \cong \mathrm{Hom}_{\OO_c}(\Delta_c(E),M).$$ Taking right derived functors gives
$$\mathrm{Ext}^i_{\OO_c}(\Delta_c(E),M) \cong  \mathrm{Hom}_{\CC W}(E,H^i(\hh,M)).$$ 
Thus
$$H^i(\hh,M) \cong \bigoplus_{E \in \mathrm{Irr}(\CC W)} \mathrm{Ext}^i_{\OO_c}(\Delta_c(E),M) \otimes E$$ Now for each irreducible $\CC W$-module $F$, the irreducible object $L_c(F)$ of $\OO_c$ is self dual, $L_c(F)^\vee \cong L_c(F)$. Hence by Theorem \ref{duality theorem} we obtain an isomorphism of $\CC W$-modules
$$H_i(\hh^*,L_c(F)) \cong H_i(\hh^*,L_c(F))^\vee \cong H^i(\hh,L_c(F)) \cong \bigoplus_{E \in \mathrm{Irr}(\CC W)} \mathrm{Ext}^i_{\OO_c}(\Delta_c(E),L_c(F)) \otimes E.$$

\subsection{Tor and Ext} \label{Tor and Ext} Finally, putting together the previous isomorphisms, regarding $L_c(F)$ simply as an $S=\CC[\hh]$-module we have
$$\mathrm{Tor}_i(L_c(F),\CC) \cong H_i(\hh^*,L_c(F)) \cong  \bigoplus_{E \in \mathrm{Irr}(\CC W)} \mathrm{Ext}^i_{\OO_c}(\Delta_c(E),L_c(F)) \otimes E.$$ This shows that we may compute the Betti numbers of $L_c(F)$ in terms of Kazhdan-Lusztig multiplicities
$$m_{EF}^i=\mathrm{dim}_\CC(\mathrm{Ext}^i_{\OO_c}(\Delta_c(E),L_c(F))).$$ Presently we will see that in fact the graded Betti numbers may be computed via this formula as well.

\subsection{The Clifford algebra} Here we follow \cite{Ciu}. Let $V=\hh^* \oplus \hh$, and define a non-degenerate symmetric form $(\cdot,\cdot)$ on $V$ by taking $\hh^*$ and $\hh$ to be isotropic, and requiring $(x,y)=x(y)$ for $x \in \hh^*$ and $y \in \hh$. The \emph{Clifford algebra} $C(V)$ for this form is the algebra generated by $\hh$ and $\hh^*$ subject to the relations: $x^2=0=y^2$ for all $x \in \hh^*$ and all $y \in \hh$, and
$$yx+xy=( x,y)  \quad \hbox{for $x \in \hh^*$ and $y \in \hh$.}$$ The exterior algebras $\Lambda^\bullet \hh$ and $\Lambda^\bullet \hh^*$ are subalgebras of $C(V)$ and multiplication induces a vector space isomorphism $\Lambda^\bullet \hh^* \otimes \Lambda^\bullet \hh \rightarrow C(V)$. 

\subsection{The spin representation of $C(V)$} The \emph{spin representation} of $C(V)$ is the induced representation 
$$\Lambda^\bullet \hh^*=\mathrm{Ind}_{\Lambda^\bullet \hh}^{C(V)} \CC,$$ where $\CC$ is the one-dimensional $\Lambda^\bullet \hh$-module on which the elements $y \in \hh$ all act by $0$. As a $\Lambda^\bullet \hh^*$-module the spin representation is isomorphic to $\Lambda^\bullet \hh^*$. The spin representation carries a positive definite Hermitian contravariant form defined in the obvious fashion.

\subsection{Cohomology via Koszul complex} To compute $H_i(\hh^*,M)$ one introduces the Koszul complex $M \otimes \Lambda^i \hh^*$ with differential obtained as follows: let $y_1,\dots,y_n$ be a basis of $\hh$ with dual basis $x_1,\dots,x_n$ of $\hh^*$. Working in the tensor product algebra $\CC[\hh] \otimes C(V)$, define 
$$D_x=\sum x_i \otimes y_i \in \CC[\hh] \otimes C(V),$$ which is independent of the choice of basis. A straightforward computation shows $D_x^2=0$. Given any $\CC[\hh]$-module $M$, the operator $D_x$ then defines a differential on the complex $M \otimes \Lambda^\bullet \hh^*$, and the homology of this complex is precisely $H_i(\hh^*,M)$ (as we will explain in the next subsection). 

\subsection{Weyl and Clifford} One defines similarly 
$$D_y=\sum y_i \otimes x_i \in \CC[\hh^*] \otimes C(V).$$ Writing $D(\hh)$ for the Weyl algebra of polynomial coefficient differential operators on $\hh$, then identifying $y_i $ with $\frac{\partial}{\partial x_i}$ we may regard both $D_x$ and $D_y$ as elements of $D(\hh) \otimes C(V)$, and check that as such we have $D_x^2=0=D_y^2$ and
$$D_x D_y+D_y D_x=\sum_{i=1}^n x_i y_i \otimes 1+1 \otimes x_i y_i.$$ This identity shows that the complex $\CC[\hh] \otimes \Lambda^i \hh^*$ is exact except at $i=0$, where it has homology $\CC$,  the one-dimensional $\CC[\hh]$ module on which $\hh^*$ acts trivially.  Thus the Koszul complex for $\CC[\hh]$ is a free resolution of $\CC$, so the homology groups $H_i(\hh^*,M)$ may be computed by tensoring $M$ with this and taking homology. 

\subsection{The Dirac operator} The remainder of this section largely follows \cite{HuWo} and \cite{Ciu}. We now deform the previous construction, replacing $D(V)$ with $H_c$. Specifically, we will consider the tensor product algebra $H_c \otimes C(V)$. The \emph{Dirac operator} is the element $D \in H_c \otimes C(V)$ defined by
$$D=\sum_{i=1}^n (x_i \otimes y_i+y_i \otimes x_i)$$ for any choice of dual bases $y_1,\dots,y_n$ and $x_1,\dots,x_n$ as above. We have
$$D=D_x+D_y \quad \text{with} \quad D_x=\sum x_i \otimes y_i \quad \text{and} \quad D_y=\sum y_i \otimes x_i.$$ Since
$$D_x^2=\sum_{i,j} x_i x_j \otimes y_i y_j=\sum_{i < j} (x_i x_j-x_j x_i) \otimes y_i y_j=0$$ and similarly $D_y^2=0$, we obtain 
$$D^2=D_x D_y+D_y D_x.$$ Now \begin{align*}
D_x D_y+D_y D_x&=\sum_{i,j} x_i y_j \otimes y_i x_j+ \sum_{i,j} y_i x_j \otimes x_i y_j=\sum_{i,j} x_i y_j \otimes y_i x_j+y_j x_i \otimes x_j y_i \\
&=\sum_{i,j} x_i y_j \otimes y_i x_j+(x_i y_j + \delta_{ij}-\sum_{r \in R} c_r \la \alpha_r,y_j \ra \la x_i,\alpha_r^\vee \ra r) \otimes x_j y_i \\
&=\sum_{i,j} x_i y_j \otimes (y_i x_j+x_j y_i)+(\delta_{ij}-\sum_{r \in R} c_r \la \alpha_r,y_j \ra \la x_i,\alpha_r^\vee \ra r) \otimes x_j y_i \\
&=\sum_i (x_i y_i \otimes 1+ 1 \otimes x_i y_i)-\sum_{i,j,r} c_r r \otimes \la \alpha_r, y_j \ra x_j \la x_i,\alpha_r^\vee \ra y_i \\
&=\sum_i ( x_i y_i \otimes 1 + 1 \otimes x_i y_i) -\sum_{r \in R} c_r r \otimes \alpha_r \alpha_r^\vee \\
&=\mathrm{eu} \otimes 1 + \sum_i 1 \otimes x_i y_i -\sum_r ( c_r (1-r) \otimes 1 + c_r r \otimes \alpha_r \alpha_r^\vee) \\
&=\mathrm{eu} \otimes 1 + \sum_i 1 \otimes x_i y_i -\sum_r c_r (1 \otimes 1- r \otimes (1-\alpha_r \alpha_r^\vee)) \\
&=\mathrm{eu} \otimes 1 + \sum_i 1 \otimes x_i y_i -\sum_r c_r (1-r \otimes \tilde{r})
\end{align*} where for $r \in R$ we write $\tilde{r}=1-\alpha_r \alpha_r^\vee$. We have obtained:
\begin{equation} \label{Dx Dy}
D_x D_y+D_y D_x=\mathrm{eu} \otimes 1 + \sum_i 1 \otimes x_i y_i -\sum_r c_r (1-r \otimes \tilde{r})
\end{equation} as elements of $H_c \otimes C(V)$ and hence
\begin{equation} \label{D squared}
D^2=\mathrm{eu} \otimes 1 + \sum_i 1 \otimes x_i y_i -\sum_r c_r (1-r \otimes \tilde{r})
\end{equation} We remark that the action of $\tilde{r}$ on the $i$th graded piece $\Lambda^i \hh^*$ is the same as the action of $r \in W$.

\subsection{The grading on Tor and equivariant purity}

The Tor-groups of a $\CC[\hh]$-module $M$ may be calculated via the resolution $M \otimes \Lambda^i \hh^*$ with differential $D_x$. 

\begin{corollary} \label{purity}
Let $M$ be an object of $\OO_c$. If the $E$-isotypic component of the $j$th graded piece of the Tor group is non-zero 
$$\mathrm{Tor}_i(M,\CC)_{d,E} \neq 0$$ then we must have 
$$d=c_E.$$ 
\end{corollary}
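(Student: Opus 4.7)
The plan is to compute $\mathrm{Tor}_i(M,\CC)$ as the homology of the Koszul complex $K^\bullet = M \otimes \Lambda^\bullet \hh^*$ equipped with the Koszul differential $D_x = \sum x_i \otimes y_i$, and to use its partner $D_y = \sum y_i \otimes x_i$ together with identity \eqref{Dx Dy} to force the vanishing of homology in any graded isotypic piece with $d \neq c_E$.

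First I would observe that $K^\bullet$ carries two compatible gradings: a homological degree (the exterior power) and a total polynomial degree in which $\hh^*$ sits in degree $1$, so that $K^i_d = M_{d-i} \otimes \Lambda^i \hh^*$. Both $D_x$ and $D_y$ are basis-independent elements of $H_c \otimes C(V)$, hence $W$-invariant for the diagonal action of $W$ on $M \otimes \Lambda^\bullet \hh^*$, and both preserve total polynomial degree. Each graded isotypic piece $K^\bullet_{d,E}$ is therefore a subcomplex of $K^\bullet$. Next I would evaluate the three summands of identity \eqref{Dx Dy} on $M_{d-i} \otimes \Lambda^i \hh^*$: the term $\mathrm{eu} \otimes 1$ acts as $d - i$ up to a nilpotent perturbation (since $\mathrm{eu}$ is only semisimple up to nilpotent on $M_{d-i}$), the Clifford number operator $\sum_i 1 \otimes x_i y_i$ acts as the scalar $i$ on $\Lambda^i \hh^*$, and since $\tilde r$ acts on $\Lambda^\bullet \hh^*$ exactly as $r$ does (see the remark after \eqref{D squared}) the sum $\sum_r c_r(1 - r \otimes \tilde r)$ is precisely the central element $z_c$ for the diagonal $W$-action, which acts on the $E$-isotypic as the scalar $c_E$. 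Summing, $D^2$ acts on $K^i_{d,E}$ as $(d - c_E)\cdot \mathrm{id} + N$ for some nilpotent $N$; in particular $D^2$ is invertible on $K^i_{d,E}$ whenever $d \neq c_E$.

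Finally I would combine this with the observation that $D^2$ acts as zero on $D_x$-cohomology. For this, note that $D_x^2 = 0$ gives $D^2 D_x = D_x D_y D_x = D_x D^2$, so $D^2$ descends to an endomorphism of cohomology; and for any cocycle $\alpha$ one has $D^2 \alpha = D_x(D_y\alpha)$, which is a coboundary. Thus on $\mathrm{Tor}_i(M,\CC)_{d,E}$ the operator $D^2$ is simultaneously invertible and zero whenever $d \neq c_E$, forcing that graded isotypic component of Tor to vanish. The main bookkeeping hurdle I anticipate is the non-semisimplicity of $\mathrm{eu}$ on the generalized eigenspaces $M_{d'}$, but since a nilpotent perturbation of a nonzero scalar is still invertible this causes no genuine difficulty.
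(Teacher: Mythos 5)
Your proof is correct and takes essentially the same approach as the paper: both rest on the identity $D^2 = D_xD_y + D_yD_x$ together with the computation that $D^2$ acts by $d - c_E$ (plus nilpotent) on $(M_{d-i}\otimes\Lambda^i\hh^*)_E$, forcing the Tor group to vanish when $d\neq c_E$. The paper phrases this as a direct cocycle calculation ($D_xD_y(c) = (j+i-c_E)c$ is a coboundary, so the scalar must vanish), while you package it as ``$D^2$ is null-homotopic yet invertible,'' and you are slightly more careful than the paper in noting that a nilpotent perturbation of $\mathrm{eu}$ on the generalized eigenspace does not affect invertibility.
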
 
\begin{proof}
By equation \eqref{Dx Dy}, a for a cocycle $c \in M_j \otimes \Lambda^i \hh^*$  representing a non-zero homology class  of isotype $E$ and degree $d=i+j$ we have
$$D_x D_y(c)=(j+i-c_E) c \quad \implies \quad j+i-c_E=0.$$
\end{proof}

\subsection{Dirac cohomology}

If $M$ is a $H \otimes C(V)$-module then we define the \emph{Dirac cohomology} of $M$ by 
$$H_D(M)=\mathrm{ker}_M(D)/\mathrm{ker}_M(D) \cap \mathrm{im}_M(D).$$ When $M$ is an $H$-module we abuse notation somewhat by defining
$$H_D(M)=H_D(M \otimes \Lambda^\bullet \hh^*).$$ We hope this will not cause confusion.

The Dirac cohomology may be calculating within $\mathrm{ker}(D^2)$:  evidently 
$$H_D(M)=\mathrm{ker}_{\mathrm{ker}(D^2)}(D)/\mathrm{im}_{\mathrm{ker}(D^2)}(D).$$ That is, it is the cohomology of the operator $D$ restricted to $\mathrm{ker}(D^2)$.

Dirac cohomology is $\ZZ/2$-graded: the operator $D$ is of degree one with respect to the $\ZZ/ 2$-gradation given by
$$(M \otimes \Lambda^\bullet \hh^*)^\mathrm{even}=\bigoplus_{i \in 2 \ZZ} M \otimes \Lambda^i \hh^* \quad \text{and} \quad (M \otimes \Lambda^\bullet \hh^*)^\mathrm{odd}=\bigoplus_{i \in 2 \ZZ +1} M \otimes \Lambda^i \hh^*.$$ We write
$$H_D(M)=H]_D(M)^\mathrm{even} \oplus H_D(M)^\mathrm{odd}$$ for the corresponding decomposition of $H_D(M)$.

It follows from \eqref{D squared} that if $M$ is $\mathrm{eu}$-diagonalizable, then the $D^2$ action on $M \otimes \Lambda^\bullet \hh^*$ is given by the formula
$$D^2m=(a+i-c_E) m \quad \hbox{for $m \in (M_a \otimes \Lambda^i \hh^*)_E$,}$$ where for a $W$-module $N$ we write $N_E$ for the $E$-isotypic subspace. In particular the kernel of $D^2$ is the sum 
\begin{equation}
\mathrm{ker}(D^2)=\bigoplus_{c_E=a+i} (M_a \otimes \Lambda^i \hh^*)_E,
\end{equation} and is hence finite dimensional.

\subsection{Hodge decomposition for unitary modules} Suppose $M$ is a unitary representation of $H$. Then product of the form on $M$ and the contravariant form on the spin representation gives a form on $M \otimes \Lambda^\bullet \hh^*$ which is positive definite as well. 
\begin{lemma} Suppose $M$ is a unitary $H$-module on which $\mathrm{eu}$ acts locally finitely.
\begin{itemize}
\item[(a)] $M \otimes \Lambda^\bullet \hh^*$ is an orthogonal direct sum $M \otimes  \Lambda^\bullet \hh^*=\mathrm{ker}(D^2) \oplus \mathrm{im}(D^2)$.
\item[(b)]  $\mathrm{ker}(D_x) \cap \mathrm{ker}(D_y) = \mathrm{ker}(D) = \mathrm{ker}(D^2)$.
\item[(c)] $\mathrm{im}(D_x)+\mathrm{im}(D_y)=\mathrm{im}(D)=\mathrm{im}(D^2)$.
\item[(d)] $\ker(D_x)=\mathrm{ker}(D^2) \oplus \mathrm{im}(D_x)$ is an orthogonal direct sum, and likewise for $$\mathrm{ker}(D_y)=\mathrm{ker}(D^2) \oplus \mathrm{im}(D_y).$$
\end{itemize}
\end{lemma}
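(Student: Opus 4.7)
The plan is to exploit the positive-definite Hermitian form on $M \otimes \Lambda^\bullet \hh^*$ (product of the unitary form on $M$ and the contravariant form on the spin representation) to make $D$ self-adjoint, and then reduce each assertion to an orthogonal decomposition of finite-dimensional eigenspaces of $D^2$.

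First I would verify that $D_y = D_x^*$: in both tensor factors the relevant contravariant form satisfies $x_i^* = y_i$ and $y_i^* = x_i$ (in the $H_c$-factor because $M$ is unitary and $\overline{y_i} = x_i$, in $C(V)$ by construction of the contravariant form on $\Lambda^\bullet \hh^*$). Hence $(x_i \otimes y_i)^* = y_i \otimes x_i$ and summing gives $D_x^* = D_y$. Consequently $D = D_x + D_y$ is self-adjoint and $D^2 = D_x D_y + D_y D_x$ is self-adjoint and positive semidefinite. Moreover $\mathrm{eu}$ is self-adjoint and acts locally finitely, hence diagonalizably with finite-dimensional eigenspaces, so by \eqref{D squared} $D^2$ acts as the scalar $a + i - c_E$ on each finite-dimensional isotypic piece $(M_a \otimes \Lambda^i \hh^*)_E$.

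Part (a) is now the orthogonal decomposition of $M \otimes \Lambda^\bullet \hh^*$ into the zero- and nonzero-eigenspaces of the self-adjoint, diagonalizable operator $D^2$. For (b), positive semidefiniteness gives $\langle D^2 v, v \rangle = \|Dv\|^2$, so $\ker(D^2) = \ker(D)$; and since $D_x$ raises the $\Lambda$-degree while $D_y$ lowers it, the equation $D_x v + D_y v = 0$ forces each summand to vanish, giving $\ker(D) = \ker(D_x) \cap \ker(D_y)$. For (c), the identity $\mathrm{im}(T) = \ker(T^*)^\perp$ (valid here because everything decomposes into finite-dimensional blocks preserved by $D_x$, $D_y$, $D$, $D^2$) yields
$$\mathrm{im}(D_x) + \mathrm{im}(D_y) = \ker(D_y)^\perp + \ker(D_x)^\perp = (\ker(D_x) \cap \ker(D_y))^\perp = \ker(D)^\perp = \mathrm{im}(D),$$
and $\mathrm{im}(D^2) = \mathrm{im}(D)$ because on each nonzero eigenspace $D^2$ is an isomorphism while both images vanish on $\ker(D^2)$. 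For (d), on each eigenspace of $D^2$ with eigenvalue $\lambda \neq 0$ any $v$ satisfies $v = \lambda^{-1}(D_x D_y + D_y D_x) v$, so if $D_x v = 0$ then $v = D_x(\lambda^{-1} D_y v) \in \mathrm{im}(D_x)$; combined with $\ker(D^2) \subseteq \ker(D_x)$ from (b) and $\ker(D^2) = \ker(D_x) \cap \ker(D_y) \perp \mathrm{im}(D_x)$ from $\mathrm{im}(D_x) = \ker(D_y)^\perp$, decomposing any $v \in \ker(D_x)$ via (a) into its $\ker(D^2)$ and $\mathrm{im}(D^2)$ components gives the orthogonal splitting $\ker(D_x) = \ker(D^2) \oplus \mathrm{im}(D_x)$; the case of $D_y$ is symmetric.

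The main obstacle is purely bookkeeping: correctly identifying the adjoint $D_x^* = D_y$ requires tracking the conjugate-linear anti-involution on $H_c$ through the tensor product with $C(V)$ and being careful about basis choices. Once this is in place, the rest is standard linear algebra of self-adjoint operators on a Hilbert space that breaks up as an orthogonal sum of finite-dimensional $D^2$-stable blocks.
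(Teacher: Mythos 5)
Your proposal is correct and follows essentially the same Hodge-theoretic strategy as the paper: use the positive-definite Hermitian form to make $D$ self-adjoint, observe that $\mathrm{eu}$-local-finiteness reduces everything to finite-dimensional graded blocks, and then do linear algebra. A few small remarks. You have the degree conventions reversed: since $y_i \in \hh$ acts on $\Lambda^\bullet \hh^*$ by contraction, $D_x = \sum x_i \otimes y_i$ \emph{lowers} the $\Lambda$-degree and $D_y$ raises it; this does not affect the validity of the argument, only the direction of the shift. More substantively, your degree argument for $\ker(D) \subseteq \ker(D_x) \cap \ker(D_y)$ only applies immediately to $\Lambda$-homogeneous $v$; for general $v$ you need to first invoke the equality $\ker(D) = \ker(D^2)$ you just established (and the fact that $D^2$ has $\Lambda$-degree zero, so $\ker(D^2)$ is $\Lambda$-graded) to reduce to the homogeneous case. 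You leave this implicit. The paper avoids this entirely by proving $\ker(D^2) \subseteq \ker(D_x) \cap \ker(D_y)$ directly: from $D^2 v = 0$ one gets $D_x D_y v = -D_y D_x v$, hence $-(D_x v, D_x v) = (D_x D_y v, v) = (D_y v, D_y v)$, forcing both norms to vanish. Your route via $\langle D^2 v, v\rangle = \|Dv\|^2$ plus the grading is equally valid once the reduction to homogeneous elements is made explicit, and your use of $\mathrm{im}(T) = \ker(T^*)^\perp$ in parts (c) and (d) is a slightly more uniform packaging of the same orthogonality computations the paper does by hand.
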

\begin{proof}
For (a) we observe that since $\mathrm{eu}$ is self-adjoint and locally finite, it is diagonalizable. It follows that 
$$\mathrm{ker}(D^2)=\bigoplus_{a+i=c_E} (M_a \otimes \Lambda^i \hh^*)_E \quad \text{and} \quad \mathrm{im}(D^2)=\bigoplus_{a+i \neq c_E} (M_a \otimes \Lambda^i \hh^*)_E,$$ proving (a).

It is clear that $\mathrm{ker}(D_x) \cap \mathrm{ker}(D_y) \subseteq \mathrm{ker}(D) \subseteq \mathrm{ker}(D^2)$. Conversely, if $D^2v=0$ then $D_x D_y v=-D_y D_x v$ implies  
$$-(D_x v,D_x v)=(-D_y D_x v,v)=(D_x D_y v, v)=(D_y v,D_y v)$$ so that since $( \cdot,\cdot)$ is positive definite we have $D_xv=0=D_yv$, proving (b).

If $v \in \mathrm{ker}(D_x)$ then $0=(D_x(v),w)=(v,D_y(w))$ implies that $v \in \hh$ is orthogonal to $\mathrm{im}(D_y)$, and likewise $\mathrm{ker}(D_y)$ is orthogonal to $\mathrm{im}(D_x)$. Evidently $\mathrm{im}(D^2) \subseteq \mathrm{im}(D) \subseteq \mathrm{im}(D_x)+\mathrm{im}(D_y)$. If $v \in \mathrm{im}(D_x)$ or $v \in \mathrm{im}(D_y)$ then by (a) and the above it is orthogonal to $\mathrm{ker}(D^2)$ an hence belongs to $\mathrm{im}(D^2)$, proving (c).

By (b) $\mathrm{ker}(D^2)+\mathrm{im}(D_x) \subseteq \mathrm{ker}(D_x)$. Moreover, (a), (b) and (c) together imply that $M \otimes \Lambda^\bullet \hh^*$ is the orthogonal direct sum
$$M \otimes  \Lambda^\bullet \hh^*=\mathrm{ker}(D^2) \oplus \mathrm{im}(D_x) \oplus \mathrm{im}(D_y).$$ If $v \in \mathrm{ker}(D_x)$ then since $v$ is orthogonal to $\mathrm{im}(D_y)$ it belongs to $\mathrm{im}(D_x)+\mathrm{ker}(D^2)$. The proof for $D_y$ is the same.
\end{proof}
The next two results summarize what we have proved.
\begin{corollary} \label{unitary cohomology}
If $L=L_c(E)$ is a unitary $H_c$-module, then $$H_\bullet(\hh^*,L) \cong H_{D_x}(L) \cong H_D(L)=\mathrm{ker}(D^2) \cong H_{D_y}(L).$$ Moreover, we have
$$H_D(L)=\bigoplus_{c_F=a+i} (L_a \otimes \Lambda^i \hh^*)_F$$ and 
$$a+i \geq c_F \quad \hbox{for all $F$ such that $(L_a \otimes S^i)_F \neq 0$.}$$
\end{corollary}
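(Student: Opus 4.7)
The plan is to read the corollary directly off the preceding lemma, combined with the eigenvalue formula \eqref{D squared} for $D^2$ and the positive definiteness of the contravariant form on $L \otimes \Lambda^\bullet \hh^*$. First, I would observe that $H_\bullet(\hh^*, L) \cong H_{D_x}(L)$ is immediate: by the Koszul-complex computation recalled earlier in this section, $\mathrm{Tor}_\bullet(L, \CC) = H_\bullet(\hh^*, L)$ is by definition the homology of $L \otimes \Lambda^\bullet \hh^*$ equipped with the differential $D_x$, which is $H_{D_x}(L)$. Next, part (d) of the preceding lemma gives $H_{D_x}(L) = \ker(D_x)/\mathrm{im}(D_x) \cong \ker(D^2)$, and symmetrically $H_{D_y}(L) \cong \ker(D^2)$. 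For the middle identification, part (b) gives $\ker(D) = \ker(D^2)$, while parts (a) and (c) together give $\ker(D) \cap \mathrm{im}(D) = \ker(D^2) \cap \mathrm{im}(D^2) = 0$; hence $H_D(L) = \ker(D) = \ker(D^2)$.

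To obtain the explicit description of $\ker(D^2)$, I would invoke equation \eqref{D squared}: on the isotypic piece $(L_a \otimes \Lambda^i \hh^*)_F$, the operator $\mathrm{eu} \otimes 1$ acts by $a$, the operator $\sum_i 1 \otimes x_i y_i$ acts by $i$ (since $\sum_i x_i y_i$ is the degree operator on the exterior algebra), and the sum $\sum_r c_r (1 - r \otimes \tilde{r})$ reduces, using the remark after \eqref{D squared} that $\tilde{r}$ acts on $\Lambda^\bullet \hh^*$ in the same way as $r \in W$ does, to the scalar $c_F$ by which the central element $z_c = \sum_r c_r (1 - r)$ acts on the $F$-isotypic component. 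Hence $D^2$ acts on $(L_a \otimes \Lambda^i \hh^*)_F$ as multiplication by the scalar $a + i - c_F$, and $\ker(D^2)$ is precisely the direct sum of those isotypic pieces for which $a + i = c_F$.

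Finally, the inequality $a + i \geq c_F$ is where unitarity enters. Since $L$ is unitary and the spin representation carries a positive definite contravariant Hermitian form, the tensor product $L \otimes \Lambda^\bullet \hh^*$ is also positive definite, and $D_y$ is the adjoint of $D_x$ (this was used implicitly in the proof of the preceding lemma, in the identity $(D_x v, w) = (v, D_y w)$). Therefore for any $v$,
$$(D^2 v, v) = (D_x D_y v, v) + (D_y D_x v, v) = (D_y v, D_y v) + (D_x v, D_x v) \geq 0,$$
so every eigenvalue of $D^2$ on $L \otimes \Lambda^\bullet \hh^*$ is non-negative. Applying this to a nonzero $v \in (L_a \otimes \Lambda^i \hh^*)_F$ yields $a + i - c_F \geq 0$, as required. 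The main obstacle has already been dealt with in the preceding lemma, whose Hodge-type decompositions do all the substantive work; once those are in hand, the corollary is essentially bookkeeping, and the only small point deserving care is the identification of the scalars produced by \eqref{D squared} with $a + i - c_F$ on each isotypic piece.
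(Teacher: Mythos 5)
Your proof is correct and takes essentially the same route as the paper, which presents this corollary as a direct summary of the preceding Hodge-decomposition lemma together with the eigenvalue formula from equation \eqref{D squared}. The one place where you supply detail the paper leaves implicit is the non-negativity of the $D^2$-eigenvalues via the adjointness $(D_x v, w)=(v, D_y w)$ and positive definiteness, and that step is carried out correctly.
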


\begin{corollary} Suppose $L$ is a unitary $H_c$-module. Then as graded $\CC W$-modules
$$H_i(\hh^*,L) \cong \bigoplus_{c_F=a+i} (L_a \otimes \Lambda^i \hh^*)_F$$ and
$$\mathrm{dim} \left( \mathrm{Ext}^i (\Delta_c(F),L) \right)=\mathrm{dim} \left( \mathrm{Hom}_{\CC W} (F,H_i(\hh^*,L) \right).$$
\end{corollary}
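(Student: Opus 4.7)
The plan is to derive both assertions as bookkeeping consequences of material already in place, with essentially no new computation required. First I will observe that the first isomorphism is immediate from the preceding Corollary \ref{unitary cohomology} together with the Hodge-type lemma that precedes it. That lemma uses unitarity of $L$ only through the positive definiteness of the contravariant form on $L \otimes \Lambda^\bullet \hh^*$, which secures the orthogonal splittings $\mathrm{ker}(D_x) = \mathrm{ker}(D^2) \oplus \mathrm{im}(D_x)$ and hence the chain $H_\bullet(\hh^*, L) \cong H_{D_x}(L) \cong H_D(L) = \mathrm{ker}(D^2)$. Formula \eqref{D squared} then provides the explicit description
$$\mathrm{ker}(D^2) \cong \bigoplus_{c_F = a+i} (L_a \otimes \Lambda^i \hh^*)_F$$
as graded $\CC W$-modules, which is exactly the first assertion. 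If $L$ is not simple I will simply decompose it into simple unitary summands and apply the argument to each summand.

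For the dimension equality I will invoke the chain of isomorphisms built up in the subsections on $\hh$-cohomology and on Tor and Ext. Since each simple unitary summand of $L$ is self-dual (the contravariant form identifies $L_c(F) \cong L_c(F)^\vee$), Theorem \ref{duality theorem} gives an isomorphism $H_i(\hh^*, L) \cong H^i(\hh, L)^\vee$, and in particular an isomorphism of ungraded $\CC W$-modules $H_i(\hh^*, L) \cong H^i(\hh, L)$. Combining this with the decomposition
$$H^i(\hh, L) \cong \bigoplus_{E \in \mathrm{Irr}(\CC W)} \mathrm{Ext}^i_{\OO_c}(\Delta_c(E), L) \otimes E,$$
obtained from the adjunction $\mathrm{Hom}_{\OO_c}(\Delta_c(E), -) \cong \mathrm{Hom}_{\CC W}(E, H^0(\hh, -))$ by passing to derived functors, I will extract the $F$-isotypic component to obtain
$$\mathrm{Hom}_{\CC W}(F, H_i(\hh^*, L)) \cong \mathrm{Ext}^i_{\OO_c}(\Delta_c(F), L),$$
and taking $\CC$-dimensions will yield the claim.

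I do not anticipate a genuine obstacle: both assertions are essentially re-packagings of results already established in Sections \ref{radical} and \ref{homology}. The only points worth double-checking are (i) that $L$ is self-dual, which is automatic because unitary modules in $\OO_c$ are semisimple and each simple summand $L_c(F)$ is its own $(-)^\vee$ via the contravariant form, and (ii) that one may safely pass between graded and ungraded statements in the Hom/Ext identification --- this is fine since the second assertion concerns ungraded dimensions only, so the conjugate-linear duality appearing in $M \mapsto M^\vee$ reverses the direction of the pairing but preserves dimension. With these remarks in hand, the corollary reduces to assembling the pieces already proved.
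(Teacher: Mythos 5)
Your proof is correct and follows essentially the same route as the paper: the first isomorphism is a restatement of Corollary \ref{unitary cohomology} (itself derived from the Hodge lemma and equation \eqref{D squared}), and the Ext identity is obtained from Theorem \ref{duality theorem}, self-duality of unitary modules, and the adjunction-derived decomposition $H^i(\hh,L) \cong \bigoplus_E \mathrm{Ext}^i_{\OO_c}(\Delta_c(E),L)\otimes E$, which is exactly how the paper proceeds in the subsections on $\hh$-cohomology and Tor/Ext. One small simplification worth noting: the decomposition of $L$ into simple unitary summands is not needed, since the Hodge lemma is stated for arbitrary unitary $H_c$-modules on which $\mathrm{eu}$ acts locally finitely, and the self-duality $L \cong L^\vee$ used in the duality step is furnished directly by the positive definite contravariant form without first passing to irreducible constituents.
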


This completes the proof of Theorem \ref{Hodge decomposition}.

\section{The polynomial representation of the cyclotomic rational Cherednik algebra} \label{cyclotomic}

\subsection{} Here we specialize to the case of the cyclotomic reflection groups. We refer the reader especially to \cite{Gri} for the facts we will need about intertwining operators.

\subsection{The monomial groups}  For the remainder of the paper we will focus on one particular class of reflection groups. The group $G(\ell,1,n)$ consists of all $n$ by $n$ matrices with exactly one non-zero entry in each row and each column, and such that the non-zero entries are $\ell$th roots of $1$. We fix $\zeta$, a primitive $\ell$th root of $1$, and will write $\zeta_i$ for the diagonal matrix with $1$'s on the diagonal except in position $i$, where $\zeta$ appears. Write $s_{ij}$ for the transposition matrix interchanging the $i$th and $j$th basis vectors. The group $G(\ell,1,n)$ contains $\ell$ conjugacy classes of reflections: the class containing $s_{12}$, and for each $1 \leq k \leq \ell-1$, the class containing $\zeta_1^k$.

\subsection{Parabolic subgroups of $G(\ell,1,n)$} Let $v \in \hh=\CC^n$ have $m$ coordinates equal to zero. Some of the non-zero coordinates may be equal to one another up to multiplication by $\ell$th roots of one; by multiplication by the appropriate diagonal matrices in $G(\ell,1,n)$ we assume that the first $m$ coordinates are zero, the next $n_1$ are equal, the next $n_2$ are equal, and so on, and that there are no other equalities between any coordinates. The stabilizer of $\hh$ is then the subgroup
$$G(\ell,1,m) \times S_{n_1} \times S_{n_2} \times \cdots \times S_{n_k} \subseteq G(\ell,1,n) $$ with $m+n_1+\cdots+n_k=n$. This shows that up to conjugacy, every proper parabolic subgroup of $G(\ell,1,n)$ is of the form $G(\ell,1,m) \times S_{n_1} \times S_{n_2} \times \cdots \times S_{n_k}$ with $0 \leq m \leq n-1$ and $(n_1 \geq n_2 \geq \cdots)$ a partition of $n-m$. There are inclusions between these governed by the merge order on partitions. In particular, the maximal parabolic subgroups, up to conjugacy, are 
$$G(\ell,1,m) \times S_{n-m} \quad \hbox{for $0 \leq m \leq n-1$.}$$

\subsection{The cyclotomic rational Cherednik algebra} 
We will write $c=(c_0,d_0,d_1,\dots,d_{\ell-1})$ for the deformation parameter, with the conventions that $$d_0+d_1+\cdots+d_{\ell-1}=0 \quad \text{and} \quad d_j=d_{j+\ell}$$ for all $j \in \ZZ$ (this determines $d_j$ for all $j$ once $d_1,\dots,d_{\ell-1}$ are fixed). We note that if $\ell=2$ then setting $d=d_0$ gives the usual type B parameter. 

The cyclotomic rational Cherednik algebra $H_c$ is generated by $\CC[x_1,\dots,x_n]$, $\CC[y_1,\dots,y_n]$, and the group $W=G(\ell,1,n)$ subject to the relations $w f w^{-1}=w \cdot f$ for $w \in W$ and $f \in \CC[x_1,\dots,x_n]$ or $f \in \CC[y_1,\dots,y_n]$, 
 \begin{equation}
 y_i x_i=x_i y_i+1-c_0 \sum_{\substack{ 1 \leq j \neq i \leq n \\ 0 \leq r \leq \ell-1}} \zeta_i^r s_{ij} \zeta_i^{-r}-\sum_{r=0}^{\ell-1} (d_r-d_{r-1}) e_{ir}
 \end{equation} for $1 \leq i \leq n$, where $$e_{ir}=\frac{1}{\ell} \sum_{t=0}^{\ell-1} \zeta^{-tr} \zeta_i^t$$ and
\begin{equation}
y_i x_j=x_j y_i+ c_0 \sum_{r=0}^{\ell-1} \zeta^{-r} \zeta_i^r s_{ij} \zeta_i ^{-r}
\end{equation} for $1 \leq i \neq j \leq n$.  

\subsection{Basic submodules of type $E$} 

The first type of submodule we will consider occurs for $c_0=j/k$, where $j$ and $k$ are positive coprime integers with $2 \leq k \leq n$. Let $I$ be the (unique) non-trivial submodule of the polynomial representation of the type $S_k$ rational Cherednik algebra at parameter $c_0=j/k$; the zero set of $I$ is the origin. Applying Lemma \ref{submodule construction} to a product of $s$ copies of $S_k$ embedded in $G(\ell,1,n)$ gives a submodule $E_s$ of $\CC[\hh]$, with $E_s \subseteq E_{s+1}$. Though we will not need or prove this here, these exhaust the non-trivial submodules of $\CC[\hh]$ for $c$ Weil generic subject to $c_0=j/k$. 

With $c_0=j/k$ fixed as above, we will continute to write $E_1 \subseteq \cdots \subseteq E_{\lfloor n/k \rfloor}$ for the chain of non-trivial submodules constructed above. Examining the construction shows that the zero set of $E_s$ consists of those points $x=(x_1,\dots,x_n)$ that possess $s$ disjoint blocks of $k$ coordinates each such that the $\ell$th powers of the coordinates in a given block are equal to one another. It is thus a generalization of the $k$-equals arrangement (and this explains our choice of the notation $E$ for this type of submodule). 

\subsection{Non-symmetric Jack polynomials} Following \cite{DuOp} we define
$$z_i=y_i x_i+c_0 \phi_i,$$ where $$\phi_i=\sum_{\substack{1 \leq j < i \\ 0 \leq k \leq \ell-1}} \zeta_i^k s_{ij} \zeta_i^{-k}.$$ The subalgebra $\ttt$ of $H_c$ generated by $z_1,\dots,z_n,\zeta_1,\dots,\zeta_n$ is commutative, and acts in an upper-triangular fashion on $\CC[\hh]=\CC[x_1,\dots,x_n]$ with respect to a certain partial ordering of the basis of monomials. Treating $c_0$ as a formal variable for the moment, the eigenfunctions are polynomials whose coefficients are rational functions of $c_0$, and we write $f_\mu \in \CC(c_0)[x_1,\dots,x_n]$ for the eigenfunction with leading term equal to $x^\mu=x_1^{\mu_1} \cdots x_n^{\mu_n}$. We refer to $f_\mu$ as the \emph{non-symmetric Jack polynomial of type $G(\ell,1,n)$}. Dunkl and Opdam (Cor. 3.13 of \cite{DuOp}) have observed the following relationship between these polynomials for $\ell=1$ and for $\ell$ arbitrary: writing $g_\mu$ for the $\ell=1$ version, we have
\begin{equation} \label{ell power}
f_{\ell \mu}=g_\mu(x_1^\ell,\dots,x_n^\ell) \quad \hbox{for all $\mu \in \ZZ_{\geq 0}^n$.} \end{equation}

We will first describe the submodule structure of $\CC[x_1,\dots,x_n]$ in case $\ell=1$ (so $W=S_n$ is the symmetric group). We need a bit of notation. Fix coprime positive integers $j$ and $k$ with $k \geq 2$. For each integer $s$ with $1 \leq s \leq \lfloor n/k \rfloor$, we divide $n-(sk-1)$ by $k-1$ to obtain a quotient $q_s$ and remainder $r_s$ defined by
$$n-(sk-1)=q_s(k-1)+r_s \quad \hbox{with $q_s, r_s \in \ZZ_{\geq 0}$ and $r_s < k-1$.}$$ We define $\mu^s \in \ZZ_{\geq 0}^n$ by
$$\mu^{s}=((j(s+q_s))^{r_s},(j(s+q_s-1))^{k-1}, (j(s+q_s-2))^{k-1},\dots, (j s)^{k-1},0^{sk-1}).$$ We also define a partition $\tau^s$ by
$$\tau^{s}=(sk-1,(k-1)^{q_s},r_s).$$

Now combining work of Dunkl \cite{Dun2} (see also \cite{Dun3}) and Etingof-Stoica \cite{EtSt} as in \cite{BGS} we have the following description of the submodule structure of $\CC[x_1,\dots,x_n]$ when $\ell=1$:
\begin{theorem}
The non-symmetric Jack polynomial $g_{\mu^s}$ has no pole at $c_0=j/k$, the non-trivial submodules of $\CC[x_1,\dots,x_n]$ at $c_0=j/k$ are precisely $E_s$ for $1 \leq s \leq \lfloor n/k \rfloor$, we have $E_s=H_c \cdot g_{\mu_s}$ and $E_s \subseteq E_{s+1}$ for $1 \leq s \leq \lfloor n/k \rfloor -1$. Moreover, the lowest weight space of $E_s$ is $S^{\tau^s}$, which is generated as an $S_n$-module by $g_{\mu^s}$. 
\end{theorem}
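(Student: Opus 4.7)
The plan is to combine three ingredients: Dunkl's criterion in \cite{Dun2,Dun3} for the pole and singularity structure of $g_\mu$ at rational $c_0=j/k$; the Etingof-Stoica classification in \cite{EtSt} of the supports of simple modules in the principal block for $S_n$ at such parameters; and the submodule construction of Lemma \ref{submodule construction}. I proceed in three steps: (i) exhibit $g_{\mu^s}$ as a regular singular polynomial generating a copy of the Specht module $S^{\tau^s}$; (ii) identify $H_c \cdot g_{\mu^s}$ with the geometrically defined submodule $E_s$; and (iii) appeal to Etingof-Stoica to certify that these exhaust the non-trivial submodules.

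For step (i), the composition $\mu^s$ is arranged so that when its parts are read against the $(j,k)$-staircase governing Dunkl's pole/singularity conditions, every one of the $q_s$ internal ``stair risers'' lies exactly on the singular locus while all normalising denominators in the recursion for $g_\mu$ remain regular. Granted this, $g_{\mu^s}$ specialises to a polynomial annihilated by every Dunkl operator at $c_0=j/k$. The $\ttt$-weight of $g_{\mu^s}$ lies in a single $S_n$-orbit whose stabiliser is the Young subgroup of type $\tau^s=(sk-1,(k-1)^{q_s},r_s)$, and the standard intertwiner argument (as recalled in \cite{Gri}) identifies the $\CC W$-span of this orbit with $S^{\tau^s}$, realised as the lowest weight of $H_c \cdot g_{\mu^s}$.

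For step (ii), apply Lemma \ref{submodule construction} to the unique proper submodule of $\Delta_c(\triv)$ for $H_c(S_k,\CC^k)$ at $c_0=j/k$, embedded block-diagonally via $s$ disjoint copies of $S_k$ inside $S_n$; this produces an $H_c$-stable ideal whose zero set consists of points admitting $s$ disjoint $k$-blocks of equal coordinates, and one takes this to be $E_s$. The containment $H_c \cdot g_{\mu^s} \subseteq E_s$ is checked by evaluating $g_{\mu^s}$ on a generic point of each stratum: the leading monomial $x^{\mu^s}$, together with the lower-order corrections produced by the Jack recursion, vanishes to the required order on the arrangement. The opposite containment follows by comparing lowest $\CC W$-types in each degree: the Specht module $S^{\tau^s}$ occurs with multiplicity one in the appropriate graded piece of $\CC[x_1,\dots,x_n]$, so the minimal generator of $E_s$ must coincide with $g_{\mu^s}$ up to scalar. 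The inclusions $E_s \subseteq E_{s+1}$ are then immediate from the containment of zero sets.

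For step (iii), Etingof-Stoica describe the simple modules supported on the various $k$-equals strata in the principal block at $c_0=j/k$; counting these in the Grothendieck group gives a length bound for $\CC[x_1,\dots,x_n]$ that matches the length of the chain $0 \subset E_1 \subset \cdots \subset E_{\lfloor n/k \rfloor} \subset \CC[x_1,\dots,x_n]$, leaving no room for additional submodules. I expect the main technical obstacle to be step (i): verifying by direct calculation with the $\ttt$-eigenvalue formulas that the specific shape $\mu^s$, with its bottom row of $sk-1$ zeros and its $q_s$ rows of length $k-1$, realises regularity and singularity simultaneously, and that the resulting $\CC W$-module generated in the lowest degree is the full Specht module $S^{\tau^s}$ rather than a proper constituent.
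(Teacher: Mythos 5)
The paper does not actually prove this statement: it is presented as a synthesis of known results, with the attribution ``combining work of Dunkl [Dun2] (see also [Dun3]) and Etingof-Stoica [EtSt] as in [BGS].'' Your proposal draws on the same sources, so the ingredients are the right ones. But two of the steps, as you have sketched them, have real gaps.

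In step (ii), the argument for the inclusion $E_s \subseteq H_c \cdot g_{\mu^s}$ is circular. You write that because $S^{\tau^s}$ occurs with multiplicity one in the relevant graded piece of $\CC[x_1,\dots,x_n]$, ``the minimal generator of $E_s$ must coincide with $g_{\mu^s}$ up to scalar.'' This presupposes that the lowest-degree piece of $E_s$ is a copy of $S^{\tau^s}$ sitting precisely in degree $|\mu^s|$ --- but that is part of what the theorem asserts, and the definition of $E_s$ via Lemma \ref{submodule construction} (intersecting $W$-translates of an induced ideal) does not make this visible. You would need to first identify the degree of the minimal generators of $E_s$ and the $W$-isotype of that graded piece, e.g.\ by observing that the lowest-degree elements of any nonzero submodule are singular polynomials and then invoking Dunkl's explicit classification of singular polynomials at $c_0=j/k$, which identifies them as the $W$-span of the $g_{\mu^s}$.

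In step (iii), the length argument does not yield what you claim. Matching the length of the chain $0 \subset E_1 \subset \cdots \subset E_{\lfloor n/k\rfloor} \subset \CC[x_1,\dots,x_n]$ with the composition length of $\CC[x_1,\dots,x_n]$ shows only that this chain is \emph{a} composition series; it does not rule out submodules off the chain. A module of length $m$ can possess a saturated chain of length $m$ and still fail to be uniserial --- consider a module whose socle is a sum of two non-isomorphic simples. What one actually needs is uniseriality, and proving it requires more: either Dunkl's classification of singular polynomials (so that the socle of each successive quotient $\CC[x]/E_s$ is simple, being spanned by the $g_{\mu^{s+1}}$ and its translates), or an argument combining multiplicity-freeness of $\Delta_c(\triv)$ with the strict linear ordering of supports of the composition factors. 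Either way, the conclusion ``leaving no room for additional submodules'' does not follow from the Grothendieck-group count alone. Since the paper leaves the proof entirely to the cited literature, these are exactly the points you would have to extract and make explicit from Dunkl and from [BGS].
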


We observe that by \eqref{ell power} the $\CC G(\ell,1,n)$-submodule generated by $f_{\ell \mu}=g_{\mu^s}(x_1^\ell,\dots,x_n^\ell)$ is isomorphic to the representation indexed by the $\ell$-partition $(\tau^s,\emptyset,\dots,\emptyset)$. The next lemma is a consequence of this fact, the preceding theorem, Theorem 3.3  from \cite{BGS} and the formula $z_i=y_i x_i+c_0 \phi_i$.

\begin{lemma} \label{E1 basis}
Suppose $c$ is a parameter with $c_0=j/k$. We have $E_s=H_c \cdot f_{\ell \mu^s}$ for all $1 \leq s \leq \lfloor n/k \rfloor$, and the lowest weight space of $E_s$ is $\CC G(\ell,1,n) \cdot f_{\ell \mu^s}$, which is isomorphic to the $G(\ell,1,n)$-module indexed by the multi-partition $(\tau^s,\emptyset, \dots,\emptyset)$. In particular, for any choice of parameter $c$, the submodule $E_1$ has the vector space basis
$$E_1=\CC \{ f_\mu \ | \ \hbox{$\mu$ is $(j \ell, k)$-admissible} \}$$
\end{lemma}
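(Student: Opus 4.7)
The plan is to transport the description of $E_s$ from the symmetric-group setting (the preceding theorem) to the cyclotomic setting via the power substitution \eqref{ell power}, and then invoke Theorem 3.3 of \cite{BGS} together with the formula $z_i = y_i x_i + c_0 \phi_i$ to extract the explicit basis of $E_1$ in terms of $(j\ell,k)$-admissible compositions.

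First, by \eqref{ell power} we have $f_{\ell \mu^s} = g_{\mu^s}(x_1^\ell, \dots, x_n^\ell)$, so the regularity of $g_{\mu^s}$ at $c_0 = j/k$ (given by the preceding theorem) yields the regularity of $f_{\ell \mu^s}$ there. Since each $\zeta_i$ sends $x_i^\ell$ to $\zeta^\ell x_i^\ell = x_i^\ell$, the polynomial $f_{\ell \mu^s}$ is $(\ZZ/\ell)^n$-invariant. Consequently the $\CC G(\ell,1,n)$-submodule it generates has trivial $(\ZZ/\ell)^n$-isotype, and the induced $S_n$-action factors through the action on $g_{\mu^s}$, which by the preceding theorem generates $S^{\tau^s}$. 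Under the standard indexing convention for multi-partition representations, this identifies $\CC G(\ell,1,n) \cdot f_{\ell \mu^s}$ with $(\tau^s,\emptyset,\dots,\emptyset)$. To identify the full $H_c$-submodule, I would apply Lemma \ref{submodule construction} to the parabolic $(S_k)^s \subseteq G(\ell,1,n)$ used to define $E_s$: on each factor the non-trivial type-$A$ submodule at $c_0 = j/k$ is generated by the analog of $g_{\mu^s}$, and Lemma \ref{submodule construction} then produces an $H_c$-stable ideal of $\CC[\hh]$ whose lowest weight, under the substitution $x_i \mapsto x_i^\ell$ carrying the type-$A$ structure onto the $\zeta$-invariant part of the cyclotomic structure, is precisely $f_{\ell \mu^s}$. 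This yields both $E_s = H_c \cdot f_{\ell \mu^s}$ and the asserted identification of its lowest weight space.

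Finally, for the basis of $E_1$, I would appeal to Theorem 3.3 of \cite{BGS}, which describes the corresponding type-$A$ submodule at $c_0 = j/k$ via a basis of non-symmetric Jack polynomials indexed by $(j,k)$-admissible compositions. To pass to the cyclotomic setting, I would use the cyclotomic intertwining operators of \cite{Gri} applied to $f_{\ell\mu^1}$ to reach every non-symmetric Jack $f_\mu$ lying in $E_1$ (noting that, unlike in the $\ell = 1$ case, the exponents $\mu_i$ need not be divisible by $\ell$). The formula $z_i = y_i x_i + c_0 \phi_i$ converts the $z_i$-spectrum of $f_\mu$ into the eigenvalues controlling the intertwiner action, and tracking when these eigenvalues produce a nonzero (and in fact regular) application of an intertwiner shows that $f_\mu \in E_1$ exactly when $\mu^-_i \geq \mu^-_{i-(k-1)} + j\ell$ for all $k \leq i \leq n$, with equality forcing $w_\mu^{-1}(i) < w_\mu^{-1}(i-(k-1))$; that is, exactly when $\mu$ is $(j\ell,k)$-admissible. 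The main technical obstacle is the bookkeeping for the intertwiner action: one must confirm that the $(j,k)$-admissibility in type $A$, after the rescaling $c_0 \mapsto c_0$ and the substitution $x_i \mapsto x_i^\ell$, translates precisely into $(j\ell,k)$-admissibility on general cyclotomic compositions, without spurious regularity failures coming from the $c_0 \phi_i$-shift.
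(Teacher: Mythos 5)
The paper does not give a detailed proof of this lemma; it merely states that the result "is a consequence of" the observation about the power substitution \eqref{ell power}, the preceding type-$A$ theorem, Theorem 3.3 of \cite{BGS}, and the formula $z_i = y_ix_i + c_0\phi_i$, and your proposal unpacks precisely those same four ingredients in essentially the order the paper suggests. Your elaboration (the $(\ZZ/\ell)^n$-invariance of $f_{\ell\mu^s}$, the identification with the multipartition $(\tau^s,\emptyset,\dots,\emptyset)$, transporting the BGS basis via the power map, and using intertwiners together with the $z_i$-eigenvalue bookkeeping to rescale the admissibility step from $j$ to $j\ell$) is faithful to the paper's intended argument, so this is the same approach.
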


\subsection{Lattices} In this subsection $R$ is a principal ideal domain with field of fractions $F$, $L$ is a free $R$-module of finite rank, and $V=F \otimes_R L$. We regard $L$ as a subset of $\hh$ via the inclusion $\ell \mapsto 1 \otimes \ell$ for $\ell \in L$.

\begin{lemma} \label{lattice lemma}
Let $W \subseteq V$ be a subspace and define $M=W \cap L$. Then
\begin{enumerate}
\item[(a)] The canonical map $F \otimes_R M \to W$ given by $f \otimes m \mapsto fm$ is an isomorphism.

\item[(b)] $M$ is a free $R$-module of rank equal to the dimension of $W$.
\end{enumerate}
\end{lemma}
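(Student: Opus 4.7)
The plan is to prove part (b) first using a standard fact, then prove (a) by separately checking injectivity and surjectivity of the canonical map, and finally read off the precise rank in (b) from (a).

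First, I would invoke the standard structure theorem for finitely generated modules over a PID: since $L$ is free of finite rank over $R$ and $M = W \cap L$ is an $R$-submodule of $L$, it follows that $M$ is itself free of finite rank over $R$. This establishes the freeness part of (b); the precise value of the rank will come out at the end.

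Next I would turn to (a). Fix an $R$-basis $m_1,\dots,m_r$ of $M$, so that $F \otimes_R M$ has $F$-basis $1\otimes m_1,\dots,1\otimes m_r$, whose image under the canonical map is $m_1,\dots,m_r \in V$. For injectivity, I would show that $m_1,\dots,m_r$ remain $F$-linearly independent in $V$: any relation $\sum (a_i/b)\, m_i = 0$ with $a_i \in R$ and $b \in R\setminus\{0\}$ (obtained by clearing denominators) yields $\sum a_i m_i = 0$ in $L$, which forces all $a_i=0$ by $R$-linear independence of the $m_i$ in $M \subseteq L$. For surjectivity, let $w \in W \subseteq V = F\otimes_R L$. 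Writing $w$ as a finite sum $\sum f_j \otimes \ell_j$ with $f_j \in F$ and $\ell_j \in L$, I clear denominators: there exists $b \in R \setminus \{0\}$ such that $bf_j \in R$ for every $j$, hence $bw \in L$. Since $W$ is a subspace, also $bw \in W$, so $bw \in W \cap L = M$. Then $w = b^{-1}(bw)$ is visibly in the image of $F \otimes_R M \to W$. This proves (a).

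Finally, part (b)'s rank assertion follows by dimension counting: by (a),
$$\dim_F W = \dim_F (F \otimes_R M) = \operatorname{rank}_R M,$$
since tensoring a free $R$-module of rank $r$ with its field of fractions yields an $F$-vector space of dimension $r$.

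There is no real obstacle here; the only step requiring any care is the surjectivity argument, where one must remember that scaling an element of $W$ by a nonzero element of $R$ keeps it inside $W$ (as $W$ is an $F$-subspace) while simultaneously placing it inside $L$. Everything else is a direct application of standard facts about free modules over a PID.
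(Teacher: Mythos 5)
Your proof is correct and follows essentially the same approach as the paper: surjectivity by clearing denominators (noting that scaling by a nonzero $r \in R$ keeps an element of $W$ in $W$ while landing it in $L$), freeness of $M$ from the structure theory of modules over a PID, and the rank read off from part (a). The one small difference is injectivity: the paper invokes flatness of $F$ as an $R$-module (valid over any integral domain), whereas you give a more hands-on argument by fixing an $R$-basis of $M$ and clearing denominators to show it stays $F$-linearly independent; both are fine, with the paper's version being slightly more economical and yours more elementary and self-contained.
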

\begin{proof}
For each $w \in W$ there is some $r \in R$ such that $rw \in L$. Thus the map in (a) is surjective. It is also injective since $F$ is a flat $R$-module, proving (a) (this part uses only that $R$ is an integral domain). For (b), we observe that $M \subseteq L$ is free since submodules of free $R$-modules are free, and by (a) its rank is the dimension of $W$. 
\end{proof}

\subsection{Basic submodules of type $Z$}

We will use the previous lemma together with the theory of non-symmetric Jack polynomials to describe a collection of submodules that we conjecture generate the lattice of submodules of $\Delta_c(\mathrm{triv})$.  Let $R=\CC[c]$ be the polynomial ring in one variable $c$ and let $F=\CC(c)$ be its field of fractions. We fix $d_0,\dots,d_{\ell-1} \in F$ with $d_0+\cdots+d_{\ell-1}=0$. We will write $H_F=F \otimes H(W,V)$, $\Delta_F(\mathrm{triv})$, etc... for the base change to $F$ corresponding to the choice of parameters $d_0,\dots,d_{\ell-1}$ and $c_0=c$ generic. Similarly, $H_R$, $\Delta_R(\mathrm{triv})$, etc... denote the base change to $R$.

We now define certain $R$-submodules of $\Delta_R(\mathrm{triv})$, which will turn out to be $H_R$-stable along certain hyperplanes in the parameter space. Given integers $k$ and $m$ with $k>0$ and $0 \leq m \leq n-1$ we put
$$W_{k,m}=F \{f_\mu \ | \ \mu^-_{m+1} \geq k \}$$ and 
$$M_{k,m}=W_{k,m} \cap \Delta_R(\mathrm{triv}).$$ By Lemma \ref{lattice lemma} applied to each graded piece of the standard module, $M_{k,m}$ is a free graded $R$-module of graded rank equal to the graded dimension of $W_{k,m}$.  

Suppose now that the parameters satisfy
$$d_0-d_{-k}+\ell m c_0=k  \quad \hbox{and $k \neq 0$ mod $\ell$.}$$ In this case \cite{Gri} shows $W_{k,m}$ is an $H_F$-stable subspace and hence $M_{k,m}$ is an $H_R$-submodule of $\Delta_R(\mathrm{triv})$. Given a further specialization of $c_0$ to a complex number, we will write $Z_{k,m}=\CC \otimes M_{k,m}$ for the specialization of $M_{k,m}$, which is an $H_c$-submodule of $\CC[\hh]$ of graded dimension equal to the graded dimension of $W_{k,m}$. Suppose that the parameters $c$ are (Weil) generic subject to the equation
$$d_0-d_{-k}+\ell mc_0=k \quad \hbox{and $k \neq 0$ mod $\ell$.}$$ In this case, by Theorem  7.5 of \cite{Gri2} , there is a unique non-trivial submodule of $\CC[\hh]$, and it follows that $Z_{k,m}$ is this unique non-trivial submodule. In particular, when $m=n-1$ the quotient of $\CC[\hh]$ by $Z_{k,n-1}$ is finite dimensional and hence the zero set of $Z_{k,n-1}$ is just the origin $0 \in \hh$. 

On the other hand, using Lemma \ref{submodule construction} we can construct a non-trivial submodule of $\CC[\hh]$ from the non-trivial submodule of the polynomial representation of $H_c(G(\ell,1,m+1)$. It follows that the zero set of $Z_{k,m}$ is the set of points $x \in \CC^n$ with at least $m+1$ zeros amongst the coordinates $x_1,\dots,x_n$ (this is our reason for using the letter $Z$).

\subsection{The submodule structure of $E_1$}

In Lemma \ref{E1 basis} we observed that the submodule $E_1$ is always $\ttt$-diagonalizable. Suppose that $$d_0-d_{-p}+\ell m c_0=p$$ for integers $p$ and $m$ with $p>0$, $m \geq 0$, and $p \neq 0$ mod $\ell$. Then the vector space
$$Z_{p,m}'=\CC \{f_\mu \ | \ \hbox{ $\mu$ is $(j \ell,k)$-admissible and $\mu_{m+1}^- \geq p$} \}$$ is an $H_c$-submodule of $E_1$. By applying exactly the same argument as in Theorem 7.5 from \cite{Gri2} it follows that its submodule lattice is generated by the submodules $Z_{p,m}'$ where $p>0$ and $m \geq 0$ are integers  such that the equation
$$d_0+d_{-p}+\ell m c_0=p$$ holds. In particular:
\begin{theorem} \label{socle thm}
Suppose $c_0=j/k$ for positive coprime integers $k$ and $j$ with $2 \leq k \leq n$. The socle $S$ of $\CC[\hh]$ has vector space basis
$$S=\CC\{f_\mu \ \vert \ \mu_{m+1}^- \geq p  \quad \hbox{if $d_0-d_{-p}+\ell m c_0=p$, $p \neq 0$ mod $\ell$, and $\mu$ is $(\ell j,k)$-admissible} \}.$$
\end{theorem}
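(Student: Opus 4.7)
The plan is to realize the socle as the intersection of a generating family for the sublattice of submodules of $\CC[\hh]$ contained in $E_1$, and then to read off a Jack polynomial basis. Since $c_0 = j/k$, Lemma~\ref{E1 basis} provides the proper submodule $E_1$ with basis the $(\ell j, k)$-admissible Jack polynomials $f_\mu$. First I would show that the socle of $\CC[\hh]$ lies inside $E_1$. Any submodule of $\CC[\hh]$ is $\ttt$-stable, hence is spanned by the Jack polynomials it contains; a weight-by-weight comparison with the chain $E_1 \subset E_2 \subset \cdots \subset E_{\lfloor n/k \rfloor}$ governing the submodule lattice above $E_1$ (Lemma~\ref{E1 basis} and the theorem preceding it) shows that every simple submodule must be contained in $E_1$, since no Jack polynomial outside $E_1$ can generate a simple submodule without dragging in the rest of some layer $E_s/E_{s-1}$.

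The second step is to describe the sublattice of submodules contained in $E_1$. Following the argument of Theorem~7.5 of \cite{Gri2} as sketched in the paragraph preceding the theorem, this sublattice is generated by the collection of submodules $\{Z_{p,m}'\}$ indexed by pairs of integers $(p,m)$ with $p>0$, $m\geq 0$, $p \not\equiv 0 \pmod{\ell}$, and $d_0 - d_{-p} + \ell m c_0 = p$. Being the minimum non-zero element of this sublattice, the socle is therefore
$$\mathrm{soc}(\CC[\hh]) \;=\; \bigcap_{(p,m)} Z_{p,m}',$$
with the convention that if no valid $(p,m)$ exists then this intersection is simply $E_1$ itself.

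Finally, I would extract the claimed basis by Jack polynomial bookkeeping. Each $Z_{p,m}'$ has the basis $\{f_\mu : \mu \text{ is } (\ell j, k)\text{-admissible and } \mu_{m+1}^- \geq p\}$, and since the $f_\mu$ form an eigenbasis for the commutative subalgebra $\ttt$, every $\ttt$-stable subspace, in particular every intersection of submodules, is spanned by the Jack polynomials it contains. The intersection therefore has exactly the claimed basis: those $f_\mu$ that are $(\ell j, k)$-admissible and satisfy $\mu_{m+1}^- \geq p$ for every pair $(p,m)$ for which the resonance condition $d_0 - d_{-p} + \ell m c_0 = p$ with $p \not\equiv 0 \pmod{\ell}$ holds.

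The main obstacle is the description of the sublattice of submodules of $E_1$ itself: transplanting the intertwining-operator arguments of \cite{Gri2} from the full polynomial representation to the submodule $E_1$ requires checking both that each $Z_{p,m}'$ really is a submodule (the condition $d_0 - d_{-p} + \ell m c_0 = p$ must produce a singular vector inside $E_1$, not merely in $\CC[\hh]$) and that the family $\{Z_{p,m}'\}$ is exhaustive, so that no further submodules of $E_1$ arise from other specializations of the parameters. Everything else in the argument is essentially combinatorial bookkeeping with non-symmetric Jack polynomials.
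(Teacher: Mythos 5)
Your overall strategy coincides with the paper's: identify $E_1$ as a $\ttt$-diagonalizable submodule with Jack polynomial basis (Lemma~\ref{E1 basis}), show the $Z_{p,m}'$ are $H_c$-submodules of $E_1$, invoke the argument of Theorem~7.5 of \cite{Gri2} to conclude that these generate the submodule lattice of $E_1$, and then read off the socle as the minimal element of that lattice with its Jack polynomial basis. You also correctly flag the principal technical lift — transplanting the intertwining-operator analysis of \cite{Gri2} from $\CC[\hh]$ to the subobject $E_1$ — as the substantive work.

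There is, however, a genuine flaw in the first step as you have written it. You argue that the socle of $\CC[\hh]$ lies in $E_1$ because ``any submodule of $\CC[\hh]$ is $\ttt$-stable, hence is spanned by the Jack polynomials it contains.'' This inference is not valid at the special parameter $c_0 = j/k$: while every submodule is indeed $\ttt$-stable, the full polynomial representation is \emph{not} $\ttt$-diagonalizable there — some $f_\mu$ acquire poles, and $\ttt$ acts with nontrivial Jordan blocks on $\CC[\hh]$. A $\ttt$-stable subspace of a non-semisimple $\ttt$-module need not be spanned by eigenvectors. It is precisely $E_1$ (not all of $\CC[\hh]$) which Lemma~\ref{E1 basis} asserts to be $\ttt$-diagonalizable, so using diagonalizability to conclude that a given submodule sits inside $E_1$ is circular. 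What is actually needed is a separate argument that the socle of $\CC[\hh]$ is contained in $E_1$ — for instance by arguing that the socle is simple (as the paper does implicitly, via the \cite{Gri2}-style simplicity argument for the minimal $Z_{p,m}'$), from which containment in the nonzero submodule $E_1$ is automatic. Once that containment is secured, the rest of your proposal — that every $\ttt$-stable subspace of the $\ttt$-diagonalizable $E_1$ is spanned by Jacks, and the intersection $\bigcap_{(p,m)} Z_{p,m}'$ has the stated basis — is sound and matches the paper.
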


Next we prove the following more precise version of Theorem \ref{basis thm}:

\begin{theorem} \label{radical socle basis}
Suppose $\ell \geq 2$ and $c$ satisfies the conditions from part (d) of Theorem \ref{radical socle}, so that in particular $c_0=1/k$ and $d_0-d_{\ell-1}+\ell m c_0=1$ with $m$ minimal. Then the socle of $\CC[\hh]$ is $E_1 \cap Z_{1,m}$, the ideal of the union $X_{k,\ell,n} \cup Y_{m+1,n}$, with vector space basis
$$E_1 \cap Z_{1,m}=\CC \{f_\mu \ \vert \ \mu_{m+1}^- \geq 1 \ \hbox{and $\mu$ is $(\ell,k)$-admissible} \}.$$ If moreover the inequalities 
$$d_0-d_{-p}+\ell m' c_0<p$$ hold for all pairs $(m',p)$ as in (d) of Theorem \ref{radical socle} with $0 < p < \ell$ then the socle is unitary.
\end{theorem}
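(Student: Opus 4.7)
The plan is to derive Theorem~\ref{radical socle basis} from Theorem~\ref{socle thm} by carefully bookkeeping the constraints implied by part~(d) of Theorem~\ref{radical socle}, then identify the resulting socle with $I(X_{k,\ell,n}\cup Y_{m+1,n})$ via Feigin's stability theorem, and finally settle unitarity using the classification of unitary cyclotomic Cherednik algebra representations.

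First, I would translate the parameter hypotheses into the input for Theorem~\ref{socle thm}. With $c_0=1/k$ (so $j=1$), the conditions in (d) forbid any holding equation $d_0-d_{-p}+\ell m'c_0=p$ (with $p\not\equiv 0\pmod\ell$) for $(m',p)$ satisfying $m'=x(k-1)+r$ and either $p>x\ell$ if $r<m$, or $p>x\ell+1$ if $r\geq m$. For the equations that \emph{are} allowed to hold, a direct check using $(\ell,k)$-admissibility shows that the corresponding extra constraints $\mu_{m'+1}^-\geq p$ from Theorem~\ref{socle thm} are automatically satisfied: if $r<m$ the admissibility recursion gives $\mu^-_{x(k-1)+r+1}\geq x\ell\geq p$, while if $r\geq m$ the hypothesis $\mu^-_{m+1}\geq 1$ boosts this to $\mu^-_{x(k-1)+r+1}\geq 1+x\ell\geq p$. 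Hence the socle basis from Theorem~\ref{socle thm} reduces exactly to $\{f_\mu:\mu\text{ is }(\ell,k)\text{-admissible and }\mu_{m+1}^-\geq 1\}$. Since $E_1$ has Jack basis given by $(\ell,k)$-admissibility (Lemma~\ref{E1 basis}) and $Z_{1,m}$ has Jack basis $\{f_\mu:\mu_{m+1}^-\geq 1\}$, and both are $\ttt$-stable with the non-symmetric Jacks as $\ttt$-eigenvectors, the socle equals $E_1\cap Z_{1,m}$.

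Next, I would identify this socle with $I(X_{k,\ell,n}\cup Y_{m+1,n})$. Feigin's theorem applies to both the $S_k$-strata of $X_{k,\ell,n}$ (since $c_0=1/k$) and the $G(\ell,1,m+1)$-strata of $Y_{m+1,n}$ (since $d_0-d_{\ell-1}+\ell mc_0=1$), so $I(X_{k,\ell,n}\cup Y_{m+1,n})=I(X_{k,\ell,n})\cap I(Y_{m+1,n})$ is $H_c$-stable. The containment of the socle inside $I(X_{k,\ell,n}\cup Y_{m+1,n})$ follows since the zero set of the socle equals $V(E_1)\cup V(Z_{1,m})=X_{k,\ell,n}\cup Y_{m+1,n}$. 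The reverse inclusion is the main obstacle: I would argue that $I(X_{k,\ell,n}\cup Y_{m+1,n})$ is generated as an $H_c$-module by the $G(\ell,1,n)$-isotype in its lowest-degree component, namely the one indexed by $(\lambda^0,\emptyset,\dots,\emptyset,\lambda^1)$ spanned by the Garnir polynomials $f_T$. Since these Garnirs lie in the socle (they span its lowest weight, as read off from the Jack basis), the $H_c$-orbit of the lowest piece of $I(X_{k,\ell,n}\cup Y_{m+1,n})$ lies in the socle, giving the reverse inclusion. A Hilbert series comparison---counting Jacks against inclusion--exclusion on the coordinate rings of $X_{k,\ell,n}$, $Y_{m+1,n}$ and their intersection---provides a backup route if the lowest-weight argument proves delicate.

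Finally, for the unitarity assertion I would invoke the classification of unitary irreducibles for cyclotomic groups from \cite{Gri3}. The strict inequalities $d_0-d_{-p}+\ell m'c_0<p$ for $0<p<\ell$ place $c$ strictly inside the relevant unitarity walls, with only the single equality $d_0-d_{\ell-1}+\ell mc_0=1$ saturated; this equality is precisely the boundary of the unitary chamber for the irreducible representation with lowest weight $(\lambda^0,\emptyset,\dots,\emptyset,\lambda^1)$, so the socle is unitary. Equivalently, the $f_\mu$ in the socle basis are pairwise orthogonal for the contravariant form and their norms are given by explicit product formulas whose factors remain positive under the stated inequalities, giving positive-definiteness by direct computation on the Jack basis.
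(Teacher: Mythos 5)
Your strategy differs from the paper's in how it pins down the socle and identifies it with the ideal. The paper works with the chain of $\ttt$-diagonalizable submodules $Z_{p+xj\ell,m+xk}'$ inside $E_1$, observes that the intersection $E_1\cap Z_{1,m}$ must be one of these, and then kills $x>0$ by a leading-term argument: $Z_{1,m}$ is the monomial ideal spanned by $x^\nu$ with at least $n-m$ positive entries, so $f_{\ell\mu}$ with only $n-(k-1)<n-m$ positive entries in its leading exponent cannot lie in $Z_{1,m}$. Your route instead invokes Theorem~\ref{socle thm} directly and shows, via a clean bookkeeping with the $(\ell,k)$-admissibility recursion, that the extra constraints $\mu^-_{m'+1}\geq p$ imposed by the surviving equations are all implied by $\mu^-_{m+1}\geq 1$. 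This computation is correct and does neatly sidestep the paper's leading-term step. What your route buys is a more transparent derivation of the Jack basis; what it loses is self-containment, since Theorem~\ref{socle thm} itself rests on the same generic-specialization machinery (``arguing as in \cite{Gri2}'') that the paper's direct argument spells out.

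There is a real gap in your identification of the socle with $I(X_{k,\ell,n}\cup Y_{m+1,n})$. You assert as your ``main obstacle'' resolution that $I(X_{k,\ell,n}\cup Y_{m+1,n})$ is generated as an $H_c$-module by its lowest-degree isotype; but that is essentially the content of Theorem~\ref{generation thm}, which is being proved \emph{from} this theorem, so the argument is circular unless you substitute the Hilbert series comparison you mention as a backup (which you do not carry out). The paper avoids this entirely: once $Z_{1,m}'=E_1\cap Z_{1,m}$ is established and $E_1=I(X_{k,\ell,n})$, $Z_{1,m}=I(Y_{m+1,n})$ are identified, the equality $E_1\cap Z_{1,m}=I(X_{k,\ell,n})\cap I(Y_{m+1,n})=I(X_{k,\ell,n}\cup Y_{m+1,n})$ is immediate. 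You should replace the lowest-weight-generates argument with this observation. A secondary soft spot is the claim that the intersection of the two ideals has Jack basis given by the conjunction of the two conditions; this is where the paper's observation that $Z_{1,m}$ is literally a monomial span (hence $\ttt$-stable with Jack eigenbasis) is doing the work, and your proof should cite it explicitly rather than just asserting $\ttt$-stability. The unitarity sketch via \cite{Gri3} is plausible and in the same spirit as the paper's ``along the same lines,'' though neither is written out in detail.
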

\begin{proof}
For a generic choice of parameter $c$ satisfying the equation $c_0=j/k$ and $d_0-d_{-p}+ \ell m c_0=p$, we have also
$$d_0-d_{-p}+\ell (m+xk) c_0=p+x j \ell$$ for all $x \in \ZZ$, but no other equations relevant for the submodule structure of $E_1$ hold. Thus for such a choice of parameter, the lattice of submodules of $E_1$ is generated by those of the form $Z_{p+xj,m+xk}'$. Notice that if $\mu \in Z_{p,m}'$ then $\mu_{m+1}^- \geq p$ and $\mu$ is $(j \ell,k)$-admissible, implying $\mu_{m+1+k}^- \geq \mu_{m+k}^- \geq \mu_{m+1}^-+\ell j \geq p+\ell j$. Hence we have $Z_{p,m}' \subseteq Z_{p+j\ell,m+k}'$. We may and will assume $m \geq 0$ is minimal among all such equations that hold with positive $p$. Thus the only non-trivial submodules of $E_1$ are among the submodules in the chain
$$Z_{p,m}' \subseteq Z_{p+j \ell, m+k}' \subseteq Z_{p+2j \ell, m+2k}' \subseteq \cdots.$$

Suppose now that $j=p=1$ and $0 \leq m \leq k-2$ in the above. $Z_{1,m}$ is the ideal of the set $Y_{m+1,n}$ of points with at least $m+1$ zeros amongst their coordinates and $E_1$ is the ideal of the set $X_{k,\ell,n}$ of points such that some $k$ coordinates' $\ell$th powers are equal. The intersection $E_1 \cap Z_{1,m}$ is strictly smaller than $E_1$ (here we use $m \leq k-2$) and not zero, and hence must be equal to one of the $Z_{1+x \ell,m+xk}'$ for $x \in \ZZ_{\geq 0}$. We claim $x=0$. Otherwise, there is a non-symmetric Jack polynomial $f_{\ell \mu} \in E_1 \cap Z_{1,m}$ for some partition $\mu$ such that the last $k-1$ coordinates of $\mu$ are zero. But the ideal $Z_{1,m}$ is the monomial ideal generated by the squarefree monomials of the form $x^\nu$ with $\nu$ consisting of $n-m$ entries $1$ and the rest $0$. Thus $Z_{1,m}$ is equal to the span of those monomials $x^\nu$ in which there are at least $n-m$ positive entries. In particular, the leading term $x^{\ell \mu}$ of $f_{\ell \mu}$ is not in $Z_{1,m}$ (since $\mu$ has only $n-(k-1)<n-m$ positive entries), contradicting $f_{\ell \mu}  \in Z_{1,m}$. Thus $Z_{1,m}'=Z_{1,m} \cap E_1$ for generic parameters $c$ such that $c_0=1/k$ and $d_0-d_{\ell-1}+\ell m c_0=1$, and hence for all such choices of parameters.

Now suppose that the parameter $c$ satisfies the conditions from (d) of Theorem \ref{radical socle}. Then arguing as in \cite{Gri2} shows that the submodule $Z_{1,m}'$ is simple and hence equal to the socle. This proves the first assertion of the theorem, and the assertion about unitarity is proved along the same lines.
\end{proof}

With this description of $E_1 \cap Z_{1,m}$ in hand, we will deduce a more precise version of Theorems \ref{generation thm} and \ref{basis thm} for $\ell > 1$ or $m=0$. We write $\lambda$ for the $\ell$-partition $\lambda=(\lambda^0,\emptyset,\dots,\emptyset,\lambda^1)$ with $\lambda^0$ and $\lambda^1$ as in the introduction. There is a unique lowest degree occurrence of $S^\lambda$ in $\CC[x_1,\dots,x_n]$, which is spanned by the Garnir polynomials $f_T$. 
\begin{theorem}
Suppose $\ell \geq 2$. The ideal $E_1 \cap Z_{1,m}$ is generated by its lowest degree homogeneous piece, which is isomorphic to $S^\lambda$ as a $G(\ell,1,n)$-module and is the unique lowest-degree occurrence of $S^\lambda$ in $\CC[\hh]$. This lowest-degree occurrence of $S^\lambda$ has a vector space basis consisting of the (generalized) Garnir polynomials $f_T$ as $T$ ranges over all standard Young tableaux of shape $\lambda$.
\end{theorem}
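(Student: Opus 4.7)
The plan is to reduce the assertion to representation-theoretic facts about the socle of $\CC[\hh]$ by specializing the parameter. Since $\ell \geq 2$, we may choose parameters satisfying the hypotheses of Theorem \ref{radical socle basis} (in particular $c_0 = 1/k$ and $d_0 - d_{\ell-1} + \ell m c_0 = 1$ with $m$ minimal), so that by that theorem the ideal $E_1 \cap Z_{1,m} = I(X_{k,\ell,n} \cup Y_{m+1,n})$ coincides with the socle of $\CC[\hh]$ and is an irreducible $H_c$-module $L_c(E)$. Because an irreducible object of $\OO_c$ is generated over $\CC[\hh]$ by its lowest weight space, which is a single copy of $E$ concentrated in graded degree $c_E$, the two assertions that $E_1 \cap Z_{1,m}$ is generated by its lowest degree homogeneous piece and that this piece is a single irreducible $\CC W$-module come for free. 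What remains is to identify $E$ as $S^\lambda$ and to exhibit the Garnir polynomials as a basis for this piece; these last statements are purely geometric and so transfer back to any $\ell \geq 2$.

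First I would pin down the lowest nonzero graded piece of $E_1 \cap Z_{1,m}$ using the Jack basis: minimize $|\mu|$ over compositions $\mu$ with $\mu^-_{m+1} \geq 1$ and $(\ell,k)$-admissibility $\mu^-_i \geq \mu^-_{i-(k-1)} + \ell$. Together, these constraints force $\mu^-$ into a unique minimum configuration whose parts, when arranged, correspond precisely to the pair $(\lambda^0,\lambda^1)$ from the introduction (the $m$ initial zeros carving out $\lambda^1$, and the gap-$\ell$ recursion building the columns of $\lambda^0$ on top of it). I would check that these minimizers form a single $W$-orbit of compositions; the common graded degree then matches the value $c_{S^\lambda}$ computed from $c_0 = 1/k$ and $d_0 - d_{\ell-1} + \ell m c_0 = 1$ via the cyclotomic content formula, identifying the lowest weight as $S^\lambda$.

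Next I would verify that each Garnir polynomial $f_T$ actually belongs to $E_1 \cap Z_{1,m}$: the monomial factor $\prod_{b \in \lambda^1} x_{T(b)}$ forces vanishing on $Y_{m+1,n}$ (a point with $m+1$ zero coordinates kills at least one such variable), while the column factors $(x^\ell_{T(b)} - x^\ell_{T(b')})$ force vanishing on $X_{k,\ell,n}$ by a pigeonhole argument: any $k$ indices on which the $\ell$-th powers agree must share at least two boxes of some column of $\lambda$. The $f_T$ span a $W$-stable copy of $S^\lambda$ by the standard Specht construction applied to the $\ell$-th powers of the variables, and their count equals $\dim S^\lambda$. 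Since they sit in the correct graded degree, they must exhaust the lowest weight space of the socle. Uniqueness of this lowest occurrence in $\CC[\hh]$ follows from simplicity: an independent second copy of $S^\lambda$ in the same degree would, together with the first, generate a proper $H_c$-submodule of the socle of strictly smaller nonzero size, contradicting irreducibility.

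The main obstacle is the first substantive step: matching the minimum-degree admissible Jack compositions with the shape $\lambda$ and recovering the isotype $S^\lambda$ rather than some other $W$-module sharing the same $c$-function. I expect to handle this by reducing to the symmetric-group calculation of \cite{BGS} through the Dunkl--Opdam identity \eqref{ell power}, which rewrites $f_{\ell \mu}$ in terms of classical Jacks $g_\mu$, combined with an explicit enumeration of the admissible $\mu$ saturating $\mu^-_{m+1} \geq 1$ at minimal total degree.
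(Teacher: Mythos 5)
Your argument shares the paper's backbone — specialize the parameters so $E_1 \cap Z_{1,m}$ equals the socle via Theorem \ref{radical socle basis}, observe that a simple object of $\OO_c$ is generated over $\CC[\hh]$ by its lowest degree piece, and deduce that piece is a single irreducible $\CC W$-module. Where you diverge is in identifying that isotype as $S^\lambda$: the paper computes the $\ttt$-eigenvalues of the $f_\mu$'s in the lowest piece via $z_i = y_ix_i + c_0\phi_i$ and reads off the isotype directly, whereas you propose to enumerate the $(\ell,k)$-admissible compositions with $\mu^-_{m+1}\geq 1$ of minimal $|\mu|$, match them to the diagram $(\lambda^0,\lambda^1)$, and reduce to the $\ell=1$ case of \cite{BGS} via the Dunkl--Opdam identity \eqref{ell power}. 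Both routes are feasible; the paper's is more self-contained while yours, if completed, would be more transparently combinatorial. Your direct check that the Garnir polynomials vanish on $X_{k,\ell,n}\cup Y_{m+1,n}$ is a nice independent sanity check not made explicit in the paper.

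The genuine gap is your uniqueness argument. You claim that a second independent copy of $S^\lambda$ in degree $c_{S^\lambda}$ of $\CC[\hh]$ would, together with the socle's copy, "generate a proper $H_c$-submodule of the socle." That fails: a hypothetical second copy need not lie in the socle at all, and since the socle is the unique minimal nonzero submodule, the $H_c$-submodule generated by the second copy would \emph{contain} the socle rather than sit inside it, so no contradiction arises from simplicity alone. What is actually being used in the paper's phrase "by degree considerations" is the classical fake-degree fact for $G(\ell,1,n)$: the lowest graded degree in which an irreducible $\CC W$-module appears in $\CC[\hh]$ is unique and has multiplicity one, and the Garnir polynomials of \cite{FeSh}/\cite{Sid} span that lowest occurrence. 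Once you know the socle's lowest piece is of isotype $S^\lambda$ and sits in degree $c_{S^\lambda}$, matching that degree to the unique lowest occurrence closes the argument; simplicity of the socle does not by itself substitute for the fake-degree input.
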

\begin{proof}
We first observe that (for \emph{any} group $W$) the socle, being a simple module for $H_c$, is generated as a $\CC[\hh]$-module by its lowest degree homogeneous piece. Next, examining the $\ttt$-eigenvalues of the $f_\mu$'s in this lowest degree piece, the formula $z_i=y_i x_i+c_0 \phi_i$ allows one to compute the $\phi_i$-eigenvalues of the $f_\mu$'s, and a direct calculation then shows that this lowest degree homogeneous piece is isomorphic to $S^\lambda$ as a $\CC G(\ell,1,n)$-module. By degree considerations this is the unique lowest occurrence of $S^\lambda$ in $\CC[\hh]$, and it therefore has vector space basis consisting of the Garnir polyomials.
\end{proof}

Finally we complete the proof of Theorems \ref{generation thm} and \ref{basis thm} by deducing the case $\ell=1$ from the case $\ell=2$ proved above. We recall that in this paper we use the notation $f_\mu$ for the non-symmetric Jack polynomial for the group $G(\ell,1,n)$ and $g_\mu$ for the classical non-symmetric Jack polynomial, related by the formula
\begin{equation} \label{power relation}
f_{\ell \mu}=g_\mu(x_1^\ell,\dots,x_n^\ell). \end{equation}
\begin{theorem}
Let $I$ be the ideal of the set $X_{k,1,n} \cup Y_{m+1,n}$. As a vector space $I$ has basis
$$I=\CC \{g_\mu \ \vert \ \mu_{m+1}^- \geq 1 \ \hbox{and $\mu$ is $(1,k)$-admissible} \}.$$ Moreover $I$ is generated by its lowest degree homogeneous piece, which is spanned by the polynomials $f_T$ where $T$ ranges over all standard Young tableaux on $\lambda$.
\end{theorem}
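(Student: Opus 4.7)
The plan is to reduce the $\ell=1$ case to the $\ell=2$ case established above, using the substitution $y_i\mapsto x_i^2$ to bridge the two polynomial representations. Let $A=\CC[y_1,\dots,y_n]$ and $B=\CC[x_1,\dots,x_n]$, and let $\psi\colon A\to B$ be the injective ring map $y_i\mapsto x_i^2$, corresponding on the geometric side to the surjective squaring map $\CC^n\to\CC^n$, $x\mapsto(x_1^2,\dots,x_n^2)$. The image of $\psi$ is exactly $B^{(\ZZ/2)^n}$, the fixed ring of the elementary abelian subgroup of $G(2,1,n)$ generated by the diagonal sign involutions. Writing $I_2\subseteq B$ for the $\ell=2$ analogue of $I$, the fact that the squaring map carries $X_{k,2,n}$ onto $X_{k,1,n}$ and $Y_{m+1,n}$ onto $Y_{m+1,n}$, together with its surjectivity, yields the identification $\psi(I)=I_2\cap B^{(\ZZ/2)^n}$.

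For the basis assertion, apply the $\ell=2$ case proved above to expand $I_2$ in the non-symmetric Jack basis $\{f_\nu\}$. Each $f_\nu$ is a simultaneous $\ttt$-eigenvector; in particular $\zeta_i\cdot f_\nu=(-1)^{\nu_i}f_\nu$, so $I_2\cap B^{(\ZZ/2)^n}$ has basis the subset of $f_\nu$ with all $\nu_i$ even, i.e.\ $\nu=2\mu$, and the Dunkl--Opdam relation \eqref{ell power} identifies $f_{2\mu}=\psi(g_\mu)$. Under the doubling $\nu=2\mu$ the conditions $\nu_{m+1}^-\geq 1$ and $(2,k)$-admissibility translate to $\mu_{m+1}^-\geq 1$ and $(1,k)$-admissibility, respectively (the sorting permutation is unaffected), so pulling back by $\psi^{-1}$ delivers the stated basis of $I$.

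For the generation statement, the key polynomial identity is
$$\prod_{b\in\lambda^1}x_{T(b)}\cdot f_T^{(\ell=2)}=\psi(f_T^{(\ell=1)}),$$
immediate on comparing the two Garnir formulas: multiplying the factor $\prod_{b\in\lambda^1}x_{T(b)}$ of $f_T^{(\ell=2)}$ by another copy of itself squares it into $\psi\bigl(\prod_{b\in\lambda^1}y_{T(b)}\bigr)$. Given $h\in\psi(I)=I_2\cap B^{(\ZZ/2)^n}$, the $\ell=2$ generation result writes $h=\sum_T p_T f_T^{(\ell=2)}$ with $p_T\in B$. Averaging over $(\ZZ/2)^n$ and using that $f_T^{(\ell=2)}$ transforms by the character $\chi_T=\prod_{b\in\lambda^1}\epsilon_{T(b)}$ collapses this to $h=\sum_T q_T f_T^{(\ell=2)}$, where $q_T$ is the $\chi_T$-isotypic projection of $p_T$. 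Any polynomial in $B$ transforming by $\chi_T$ factors uniquely as $\prod_{b\in\lambda^1}x_{T(b)}\cdot r_T$ with $r_T\in B^{(\ZZ/2)^n}=\psi(A)$, and the identity above rewrites $h$ as $\sum_T r_T\psi(f_T^{(\ell=1)})=\psi\bigl(\sum_T\psi^{-1}(r_T)\,f_T^{(\ell=1)}\bigr)$; injectivity of $\psi$ exhibits $\psi^{-1}(h)$ as an $A$-linear combination of the classical Garnir polynomials. Hence $I$ is generated as an $A$-module by the $f_T^{(\ell=1)}$, and since these share a common degree their span coincides with the lowest-degree piece of $I$ (otherwise the basis $\{g_\mu\}$ would produce a lower-degree element of $I$, contradicting generation).

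The main obstacle is the isotypic-factorization combined with the key Garnir identity: one must verify that the $\chi_T$-isotypic component of any element of $B$ factors through $\prod_{b\in\lambda^1}x_{T(b)}$, and that this factor fits together with the copy of $\prod_{b\in\lambda^1}x_{T(b)}$ already present in $f_T^{(\ell=2)}$ to square into the $\psi$-image of the classical Garnir. The admissibility translation and the scheme-theoretic identity $\psi(I)=I_2\cap B^{(\ZZ/2)^n}$ are routine once this averaging strategy is in place.
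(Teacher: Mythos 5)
Your proposal is correct and follows the same overall route as the paper: transfer via the squaring map $\psi\colon y_i\mapsto x_i^2$, identify $\psi(I)$ with the $(\ZZ/2)^n$-invariant part of the $\ell=2$ ideal, and use the Dunkl--Opdam relation $f_{2\mu}=\psi(g_\mu)$ together with the parity of the $\zeta_i$-eigenvalues to extract the basis. The admissibility translation under $\nu=2\mu$ is also handled correctly (the sorting permutation is scale-invariant, and $2\mu^-_{m+1}\geq 1$ collapses to $\mu^-_{m+1}\geq 1$ by integrality).

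Where you genuinely diverge from the paper is in the generation step. The paper says only that one should use the intertwining operators of \cite{Gri} together with the $(\mathbf Z/2)^n$-grading, leaving the mechanism unstated. You instead isolate the clean identity
$$\prod_{b\in\lambda^1}x_{T(b)}\cdot f_T^{(\ell=2)}=\psi\bigl(f_T^{(\ell=1)}\bigr),$$
average a presentation $h=\sum_T p_T f_T^{(\ell=2)}$ over $(\ZZ/2)^n$ to force each coefficient into the $\chi_T$-isotypic component (with $\chi_T=\prod_{b\in\lambda^1}\epsilon_{T(b)}$), and then use the unique factorization of a $\chi_T$-isotypic polynomial as $\prod_{b\in\lambda^1}x_{T(b)}$ times an invariant. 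This is a self-contained, elementary way of carrying out what the paper gestures at, and it has the virtue of not needing the intertwining-operator machinery a second time. The final deduction that the span of the $f_T^{(\ell=1)}$ is exactly the bottom degree of $I$ follows as you say, since an ideal generated in a single degree $d$ vanishes below $d$ and equals the span of the generators in degree $d$; note, as the paper remarks, that these $f_T$ need not be linearly independent for $\ell=1$, which your argument correctly does not require.
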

\begin{proof}
To distinguish between the $S_n$ situation and the $G(\ell,1,n)$ situation we write $\CC[x_1,\dots,x_n]$ for the polynomial representation of the $G(\ell,1,n)$ Cherednik algebra, and $\CC[y_1,\dots,y_n]$ for the polynomial representation of the $S_n$ Cherednik algebra. We consider the map $s:\CC^n \to \CC^n$ that squares the coordinates, defined by $$s(a_1,\dots,a_n)=(a_1^2,\dots,a_n^2).$$ We have $s(X_{k,1,n} \cup Y_{m+1,n})=X_{k,2,n} \cup Y_{m+1,n}$. The induced map on coordinate rings $$s^*:C[y_1,\dots,y_n] \to \CC[x_1,\dots,x_n] \quad \text{with} \quad y_i \mapsto x_i^2 $$ is injective, and $s^*(I)=E_1 \cap Z_{1,m} \cap \CC[x_1^2,\dots,x_n^2]$. The first assertion of the theorem now follows from the relationship \eqref{power relation} between the classical Jack polynomials and the $G(2,1,n)$ version. The fact that $I$ is generated by its lowest degree homogeneous piece may be verified as follows: use the intertwining operators as in \cite{Gri} to see that the ideal
$$\CC \{f_\mu \ | \ \mu_{m+1}^- \geq 1 \ \hbox{and $\mu$ is $(2,k)$-admissible} \} \subseteq \CC[x_1,\dots,x_n]$$ is generated by its lowest degree piece, which then implies the same fact for $I$ by using the $(\mathbf{Z}/2)^n$ grading. Finally, the $\ell=2$ version of the polynomials $f_T$ generate the $\ell=2$ version of $I$, and again using the $(\mathbf{Z}/2)^n$-grading shows that the same thing is true for the $\ell=1$ version (though these are not linearly independent in general).
\end{proof}

We mention that for the type $G(2,1,n)$ Cherednik algebra, Feigin and Shramov have proved that the ideal generated by the $f_T$'s is unitary (and hence equal to the socle: it is semi-simple by unitarity and hence simple) provided $c_0=1/k$ and $d+mc=1/2$ for integers $k$ and $m$ with $2 \leq k \leq n$ and $0 \leq m \leq k-2$. 

\begin{proof} (Of  Theorem \ref{radical socle}) Now we will complete the proof of Theorem \ref{radical socle}. Assuming that the conditions from (d) hold, Theorem \ref{radical socle basis} proves that the socle is the ideal of $X_{k,n} \cup Y_{m+1,n}$. The cases (a), (b), and (c) are similar but easier. We suppose the socle $S$ is a radical ideal. If its zero set $V$ is all of $\CC^n$, then we are in case (a) of the theorem, and \cite{DuOp} implies that the conditions on the parameter stated there hold. Otherwise, $V$ must contain some stratum, with stabilizer group that we may assume is $G(\ell,1,m) \times S_{n_1} \times \cdots \times S_{n_r}$ for certain integers $m$ and $n_1,\dots,n_r$ with $m+n_1+\cdots+n_r=n$. 

Case 1.  We have $c_0=j/k$ for some coprime positive integers $j$ and to $k \geq 2$. By Theorem \ref{socle thm} the socle has basis
$$S=\CC\{f_\mu \ \vert \ \mu_{m+1}^- \geq p  \quad \hbox{if $d_0-d_{-p}+\ell m c_0=p$, and $\mu$ is $(\ell j,k)$-admissible} \}.$$ If the socle is the ideal of $X_{k,\ell,n}$ then this implies the parameters are as in case (b) of Theorem \ref{radical socle}. Otherwise the socle must be smaller, or in other words, its zero set must be larger than $X_{k,\ell,n}$. Hence its zero set must be $X_{k,\ell,n} \cup Y_{m+1,n}$ for some $0 \leq m \leq k-2$, and by Theorems \ref{socle thm} and \ref{radical socle basis} the parameters must be as in (d) of Theorem \ref{radical socle}.

Case 2. $c_0$ is not of the form $c_0=j/k$ for coprime positive integers $j$ and $k\geq 2$. Then by \cite{BeEt} the zero set $V$ of the socle is $V=Y_{m+1,n}$ for some $0 \leq m \leq n-1$, and the socle is therefore the ideal of $Y_{m+1,n}$. By Feigin's theorem we have $d_0-d_{\ell -1}+\ell m c_0=1$ and $m$ is minimal among all $0 \leq m' \leq n-1$ such that there exists a positive integer $p$ not divisible by $\ell$ with 
$$d_0-d_{-p}+\ell m' c_0=p.$$ Suppose therefore that such an equation
$$d_0-d_{-p}+\ell m' c_0=p$$ holds for some $m' \geq m$ and $p>1$. If $m'=m$ then $Z_{p,m} \subsetneq Z_{1,m}$ is $H_c$-stable and hence contains the socle, contradiction. Otherwise $m'>m$. In this case the equations
$$d_0-d_{\ell-1}+\ell m c_0=1 \quad \text{and} \quad d_0-d_{-p}+\ell m' c_0=p$$ both hold. For generic choices of $c_0$, this implies that the socle is strictly smaller than the ideal of $Y_{m+1,n}$, hence it is so for all choices of $c_0$, contradiction. The parameters must therefore be in case (c) of Theorem \ref{radical socle}.  \end{proof}
 
 Finally, we state two conjectures and one question on the submodule structure of the polynomial representation. 

\begin{conjecture}
With the notation as above, we have $Z_{p,m}'=Z_{p,m}\cap  E_1$. 
\end{conjecture}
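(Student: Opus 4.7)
The plan is to prove the two inclusions separately, with the reverse being the substantive content. The inclusion $Z'_{p,m} \subseteq Z_{p,m} \cap E_1$ follows directly from the definitions: each basis vector $f_\mu$ of $Z'_{p,m}$ has $\mu$ both $(j\ell,k)$-admissible and satisfying $\mu^-_{m+1} \geq p$. By Lemma \ref{E1 basis}, $(j\ell,k)$-admissibility guarantees that $f_\mu$ is regular at $c_0 = j/k$ and lies in $E_1$; the condition $\mu^-_{m+1} \geq p$ puts $f_\mu$ in $W_{p,m}$, so regularity places $f_\mu$ in the lattice $M_{p,m}$, and its specialization lies in $Z_{p,m}$.

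For the reverse containment, I reduce to a characterization of which admissible $f_\mu$ lie in $Z_{p,m}$. Since $Z_{p,m}$ is $H_c$-stable (hence $\ttt$-stable) and $E_1$ is $\ttt$-diagonalizable with eigenbasis $\{f_\mu : \mu \text{ is } (j\ell,k)\text{-admissible}\}$ by Lemma \ref{E1 basis}, the intersection $Z_{p,m} \cap E_1$ is $\ttt$-stable. For Weil-generic parameters on the intersection of the two hyperplanes $c_0 = j/k$ and $d_0 - d_{-p} + \ell m c_0 = p$, the $\ttt$-eigenvalues on the admissible Jack basis are pairwise distinct, so $Z_{p,m} \cap E_1$ has an eigenbasis that is a subset of the admissible $f_\mu$'s. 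It therefore suffices to show that, for admissible $\mu$, one has $f_\mu \in Z_{p,m}$ if and only if $\mu^-_{m+1} \geq p$; the ``if'' direction is the previous paragraph.

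The ``only if'' direction is the main obstacle. Suppose $f_\mu \in Z_{p,m}$ with $\mu$ admissible and $\mu^-_{m+1} < p$; then there is $\tilde v \in M_{p,m}$ with $\tilde v|_{c = c_0} = f_\mu$, and expanding in the $F$-basis of non-symmetric Jacks gives $\tilde v = \sum_{\nu^-_{m+1} \geq p} a_\nu(c) f_\nu$. My strategy is to work over a one-parameter family $c(t)$ lying on the intersection of the two hyperplanes and generic there, so that $M_{p,m}$ and $E_1$ deform to flat families and at the generic fiber the $\ttt$-weight argument is unobstructed, forcing $E_1 \cap Z_{p,m} = Z'_{p,m}$ generically. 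An upper-semicontinuity argument for the graded Hilbert series of the intersection then bounds $\dim(E_1 \cap Z_{p,m})_d$ at the special fiber by the generic dimension of $Z'_{p,m}$ in each degree; combined with the forward containment, this forces equality.

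The hard part will be making the semicontinuity rigorous in the graded setting and verifying that the chosen family is flat enough for the Hilbert series of $E_1 \cap Z_{p,m}$ to be upper-semicontinuous. A more hands-on alternative is a residue analysis: the non-admissible $f_\nu$'s appearing in $\tilde v$ have their poles at $c_0 = j/k$ cancelled by zeros of $a_\nu(c)$, and one would identify the limiting values as explicit $\CC$-linear combinations of admissible Jacks indexed only by compositions with $\nu^-_{m+1} \geq p$, thereby precluding the appearance of $f_\mu$ with $\mu^-_{m+1} < p$ in $\tilde v|_{c = c_0}$.
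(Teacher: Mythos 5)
This statement is posed as a \emph{Conjecture} in the paper, not a theorem: the author explicitly leaves it open, observing only that it would follow from the stronger ``EZ submodule conjecture.'' There is therefore no proof in the paper to compare against, and you should be aware that you are attempting to settle an open problem, not reconstruct an existing argument.

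That said, your proposal does have a concrete gap, precisely at the step you yourself flag as ``the hard part.'' The forward inclusion $Z'_{p,m}\subseteq Z_{p,m}\cap E_1$ and the reduction, via $\ttt$-diagonalizability of $E_1$ with distinct eigenvalues along the generic stratum, to showing ``$f_\mu\in Z_{p,m}$ with $\mu$ admissible $\Rightarrow\mu^-_{m+1}\geq p$'' are both fine and match the setup the paper establishes (Lemma \ref{E1 basis}, Lemma \ref{lattice lemma}, and the discussion of $Z_{p,m}$). The problem is the semicontinuity step. If the graded dimensions of $E_1$ and of $Z_{p,m}$ are constant along your one-parameter family (which they are, being given by lattice ranks / admissibility counts), then $\dim(E_1+Z_{p,m})_d$ is \emph{lower} semicontinuous and consequently $\dim(E_1\cap Z_{p,m})_d$ is \emph{upper} semicontinuous --- meaning the intersection dimension can only \emph{jump up} at special parameters, never down. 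Your argument needs the opposite inequality: you want to cap the special-fiber intersection by its generic value. Upper semicontinuity gives $\dim(E_1\cap Z_{p,m})_d \geq \dim(Z'_{p,m})_d$ at the special point, which, combined with the forward containment $Z'_{p,m}\subseteq E_1\cap Z_{p,m}$, is vacuous. So the flat-family argument as stated does not close the loop, and the possibility that $E_1\cap Z_{p,m}$ acquires extra eigenvectors $f_\mu$ with $\mu^-_{m+1}<p$ at the special parameter is exactly what is not excluded. The residue alternative you mention is the right kind of thing to try, but carrying it out requires controlling, for each non-admissible $\nu$ with $\nu^-_{m+1}\geq p$, the exact limit of $a_\nu(c)f_\nu$ as $c_0\to j/k$ and showing its admissible-Jack expansion involves only indices with $\mu^-_{m+1}\geq p$; no such control is established here, and that analysis is precisely the open content of the conjecture.
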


This would be implied by the second conjecture, which is much stronger:

\begin{conjecture}[EZ submodule conjecture]
Suppose $c_0=j/k$ with $k$ and $j$ relatively prime positive integers, and $2 \leq k \leq n$. The lattice of submodules of $\CC[\hh]$ is generated by the submodules $E_1 \subseteq \cdots \subseteq E_{\lfloor n/k \rfloor}$ together with those $Z_{p,m}$ such that 
$$d_0-d_{-p}+\ell m c_0=p.$$
\end{conjecture}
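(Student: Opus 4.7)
The plan is to reduce the conjecture to a classification of the space of \emph{singular vectors} in $\CC[\hh]$ at $c_0 = j/k$, meaning homogeneous elements killed by every Dunkl operator $D_v$. Any submodule $M \subseteq \CC[\hh]$ is generated as an $H_c$-module by its singular vectors: the lowest graded piece $M_d$ is annihilated by all $D_v$ (since each $D_v$ lowers polynomial degree and $M_{d-1}=0$), and iterating the same observation on $M / (H_c \cdot M_d)$ exhibits $M$ as generated in this way. Consequently the conjecture is equivalent to the claim that every singular vector of $\CC[\hh]$ at $c_0 = j/k$ lies in a submodule built from the $E_s$'s and the $Z_{p,m}$'s by sums and intersections.

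The second step is to classify these singular vectors using non-symmetric Jack polynomials. Treating $c_0$ as a formal parameter over $R = \CC[c_0]$ localized at $j/k$ (with the $d_i$'s held generic subject to the relevant resonance equations), the $f_\mu$ form a $\ttt$-eigenbasis of the generic polynomial representation, and a given $f_\mu$ can develop a pole at $c_0 = j/k$ in exactly two circumstances: either (i) a ``$c_0$-resonance'' within some embedded product of $S_k$'s, in which case the residue is a Jack of the type producing the $E_s$-tower of Lemma~\ref{E1 basis} via Lemma~\ref{submodule construction}; or (ii) a ``$d$-resonance'' $d_0 - d_{-p} + \ell m c_0 = p$, in which case the residue is captured by the intertwiner construction of \cite{Gri} and lies in $Z_{p,m}$. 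Using Lemma~\ref{lattice lemma} to pass between the generic parameter regime and the specialization, one would package each singular vector as a specialization of such a Jack-residue and verify that the $\ttt$-eigencharacters of these residues are precisely those indexed by $(\ell j, k)$-admissible compositions satisfying the $(p, m)$-condition; this identifies each singular vector with an element of $E_s \cap Z_{p,m}$ for the appropriate parameters, exactly parallel to the calculations already carried out in Theorem~\ref{radical socle basis}.

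The main obstacle I foresee lies at higher-codimension strata of the parameter space, where several $d$-resonance equations $d_0 - d_{-p_i} + \ell m_i c_0 = p_i$ hold simultaneously with the $c_0$-resonance: here a single Jack $f_\mu$ can acquire a higher-order pole whose residue does not obviously decompose as a sum of lower-order residues of the two types. Ruling out such sporadic singular vectors seems to require a flatness statement for the graded dimension of the space of singular vectors along multi-parameter degenerations into $c_0 = j/k$, together with a dimension count matching the graded multiplicities of $L_c(\triv)$ and the other simples in its block, computed from the Kazhdan--Lusztig-type formula of \cite{RSVV}. Supplying such a count uniformly in the data $(p, m, s)$ appears to be beyond the currently available Cherednik-algebra toolkit, and is (to my eye) the reason that the statement remains conjectural rather than a theorem in this paper.
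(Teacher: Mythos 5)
This statement is a \emph{conjecture}, and the paper presents it as such without a proof; it only offers heuristic evidence (truth in codimension one, truth generically along $c_0 = j/k$, compatibility with the support computed in \cite{GrJu}) and a caveat (it would force the submodule lattice to be finite, unlike generic standard modules in $\OO_c$). You correctly recognize this and close by acknowledging that the statement remains open, which is the right conclusion.

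However, your proposed first step contains a genuine error. You claim that every submodule $M \subseteq \CC[\hh]$ is generated as an $H_c$-module by its singular vectors, arguing by iterating the observation that the lowest graded piece of $M$ consists of singular vectors. This iteration does not work: a singular vector in the quotient $M / H_c \cdot M_d$ lifts only to an element $m \in M$ with $D_v(m) \in H_c \cdot M_d$ for all $v$, not to a singular vector of $M$ itself. In fact, the paper explicitly contradicts your claim in the paragraph immediately following the conjecture, noting (in connection with the Dunkl--de Jeu--Opdam problem) that ``not every submodule is generated by singular polynomials, and different submodules may have the same lowest degree subspace.'' So the proposed equivalence of the conjecture with a classification of singular vectors fails, and the rest of the outline, which rests on that equivalence, does not give a correct reduction. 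Your identification of the higher-codimension resonance strata as a hard obstruction is a sensible observation and matches the paper's own remarks about the codimension-one heuristic, but it is not the only gap in the sketch.
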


The evidence for this conjecture is as follows: it is true for parameters $c_0$ that are not of the form $c_0=j/k$ for positive coprime integers $j$ and $k$ with $2 \leq k \leq n-1$ and it is true for generic parameters $c$ subject to the condition $c_0=j/k$. We hope that the submodule structure is then determined by this behavior in codimension $1$, as is the case in other problems of a Lie-theoretic flavor. Moreover, the conjecture predicts the correct support of $L_c(\mathrm{triv})$, as determined in \cite{GrJu}. The evidence against the conjecture: it would imply that there are only finitely many submodules of the polynomial representation, but it is easy to find examples of standard modules in category $\OO_c$ with infinitely many submodules (obviously we do not have an example of this behavior for the polynomial representation of type $G(\ell,1,n)$).

If the conjecture is true, it gives hope for solving a problem posed by Dunkl, de Jeu, and Opdam \cite{DJO}: determine the singular polynomials for the group $G(\ell,1,n)$. Each irreducible space of singular polynomials arises as the lowest weight of some submodule (thought not every submodule is generated by singular polynomials, and different submodules may have the same lowest degree subspace). 

\begin{question}
For which pairs $(W,c)$ is the submodule lattice of the polynomial representation of $H_c(W,\hh)$ finite?
\end{question}
\def\cprime{$'$} \def\cprime{$'$}


\begin{thebibliography}{BGS}

\bibitem[BGS]{BGS} Berkesch Zamaere, C., Griffeth, S., and Sam, S. \emph{Jack polynomials as fractional quantum Hall states and the Betti numbers of the $(k+1)$-equals ideal}, Comm. Math. Phys. 330 (2014), no. 1, 415--434

\bibitem[BeEt]{BeEt} Bezrukavnikov, R. and Etingof, P. \emph{Parabolic induction and restriction functors for rational Cherednik algebras}, Selecta Math. (N.S.) 14 (2009), no. 3-4, 397--425

\bibitem[BNS]{BNS} Bowman, C., Norton, E. and Simental, J. \emph{Characteristic-free bases and BGG resolutions of unitary simple modules for quiver Hecke and Cherednik algebras}, arXiv:1803.08736 

\bibitem[Ciu]{Ciu} Ciubotaru, D. \emph{Dirac cohomology for symplectic reflection algebras}, Selecta Math. (N.S.) 22 (2016), no. 1, 111--144
 
 \bibitem[Dun]{Dun} Dunkl, C. \emph{Differential-difference operators associated to reflection groups}, Trans. Amer. Math. Soc. 311 (1989), no. 1, 167--183
 
 \bibitem[Dun2]{Dun2} Dunkl, C. \emph{Singular polynomials for the symmetric groups}, Int. Math. Res. Not. 2004, no. 67, 3607--3635
 
 \bibitem[Dun3]{Dun3} Dunkl, C. \emph{Singular polynomials and modules for the symmetric groups}, Int. Math. Res. Not. 2005, no. 39, 2409--2436
 
 \bibitem[DJO]{DJO} Dunkl, C., de Jeu, M. and Opdam, E. \emph{Singular polynomials for finite reflection groups}, Trans. Amer. Math. Soc. 346 (1994), no. 1, 237--256
 
 \bibitem[DuOp]{DuOp} Dunkl, C. and Opdam, E \emph{Dunkl operators for complex reflection groups}, Proc. London Math. Soc. (3) 86 (2003), no. 1, 70--108
 
 \bibitem[EtSt]{EtSt} Etingof, P. and Stoica, E. \emph{Unitary representations of rational Cherednik algebras}, With an appendix by S. Griffeth. 
Represent. Theory 13 (2009), 349--370
 
 \bibitem[EGL]{EGL} Etingof, P., Gorsky, E., and Losev, I., \emph{Representations of Cherednik algebras with minimal support and torus knots}, Adv. Math. 227 (2015)
 
 \bibitem[FJMM]{FJMM} Feigin, B., Jimbo, M., Miwa, T.  and  Mukhin, E. \emph{A differential ideal of symmetric polynomials spanned by Jack polynomials at $\beta=-(r-1)/(k+1)$}, Int. Math. Res. Not. 2002, no.~23, 1223--1237
 
 \bibitem[Fei]{Fei} Feigin, M. \emph{Generalized Calogero-Moser systems from rational Cherednik algebras}, Selecta Math. (N.S.) 18 (2012), no. 1, 253--281
 
 
 \bibitem[FeSh]{FeSh} Feigin, M. and Shramov, C. \emph{On unitary submodules in the polynomial representations of rational Cherednik algebras}, Int. Math. Res. Not. IMRN 2012, no. 15, 3375--3414
 
 \bibitem[FGM]{FGM} Fishel, S., Griffeth, S., and Manosalva, E. \emph{Unitary representations of the Cherednik algebra: $V^*$-homology}, arXiv:2008.04412
 
\bibitem[Gri]{Gri} Griffeth, S. \emph{Towards a combinatorial representation theory for the rational Cherednik algebra of type G(r,p,n)}, Proc. Edinb. Math. Soc. (2) 53 (2010), no. 2, 419--445
 
 \bibitem[Gri2]{Gri2} Griffeth, S. \emph{Orthogonal functions generalizing Jack polynomials}, Trans. Amer. Math. Soc. 362 (2010), no. 11, 6131--6157
 
 \bibitem[Gri3]{Gri3} Griffeth, S. \emph{Unitary representations of cyclotomic rational Cherednik algebras}, J. Algebra 512 (2018), 310--356
 
 \bibitem[GGJL]{GGJL} Griffeth, S, Gusenbauer, A., Juteau, D., and Lanini, M. \emph{Parabolic degeneration of rational Cherednik algebras}, Selecta Math. (N.S.) 23 (2017), no. 4, 2705--2754
 
 \bibitem[GGOR]{GGOR} Ginzburg, V. Guay, N., Opdam, E. and Rouquier, R. \emph{On the category {$\mathcal{O}$} for rational {C}herednik algebras}, Invent. Math. \textbf{154} (2003), no.~3, 617--651
 
 \bibitem[GrJu]{GrJu} Griffeth, S. and Juteau, D., \emph{$W$-exponentials, Schur elements, and the support of the spherical representation of the rational Cherednik algebra}, to appear in Annales ENS, arXiv:1707.08196
 
 \bibitem[GrNo]{GrNo} Griffeth, S. and Norton, E. \emph{Character formulas and Bernstein-Gelfand-Gelfand resolutions for Cherednik algebra modules}, Proc. Lond. Math. Soc. (3) 113 (2016), no. 6, 868--906
 
 \bibitem[LiLi]{LiLi} Li, S. and Li, W. \emph{Independence numbers of graphs and generators of ideals}, Combinatorica 1 (1981), no. 1, 55--61
 
  \bibitem[deL]{deL} de Loera, J. \emph{Gr\"obner bases and graph colorings}, Beitr\"age Algebra Geom. 36 (1995), no. 1, 89--96
 
 \bibitem[HuWo]{HuWo} Huang, J. and Wong, K.  \emph{A Casselman-Osborne theorem for rational Cherednik algebras}, Transform. Groups 23 (2018), no. 1, 75--99
 
 \bibitem[RSVV]{RSVV} Rouquier, R. Shan, P., Varagnolo, M. and Vasserot, E. \emph{Categorifications and cyclotomic rational double affine Hecke algebras}, Inventiones Mathematicae, 2016, Volume 204, Issue 3, 671--786
 
 \bibitem[ScSi]{ScSi} Schenck, H. and Sidman, J. \emph{Commutative algebra of subspace and hyperplane arrangements}, Commutative algebra, 639--665, Springer, New York, 2013
 
 \bibitem[Sid]{Sid} Sidman, J. \emph{Defining equations of subspace arrangements embedded in reflection arrangements}, Int. Math. Res. Not. 2004, no. 15, 713--727
 
 \bibitem[Tur]{Tur} Tur\'an, P. \emph{On the theory of graphs}, Colloquium Math. 3, (1954) 19--30
 \end{thebibliography}
\end{document}